\newtheorem{thm}{Theorem}[section]
\newtheorem{cor}[thm]{Corollary}
\newtheorem{prop}[thm]{Proposition}
\newtheorem{lem}[thm]{Lemma}
\theoremstyle{definition}
\newtheorem{defn}[thm]{Definition}
\newtheorem{con}[thm]{Condition}
\newtheorem{notn}[thm]{Notation}
\newtheorem{rem}[thm]{Remark}
\newtheorem{rems}[thm]{Remarks}
\theoremstyle{remark}
\newcommand{\C}{\mathbb{C}}
\newcommand{\R}{\mathbb{R}}
\newcommand{\Z}{\mathbb{Z}}
\newcommand{\X}{\mathbf{X}}
\newcommand{\m}{\mu}
\newcommand{\bn}{\mathbf{n}}
\newcommand{\be}{\mathbf{e}}
\DeclareMathOperator{\tr}{tr}
\DeclareMathOperator{\divg}{div}
\DeclareMathOperator{\dist}{dist}
\let\c@equation\c@thm
\numberwithin{equation}{section}
\title[Uniqueness of high codimension shrinkers and expanders]{Uniqueness of asymptotically conical higher codimension self-shrinkers and self-expanders}
\author{Ilyas Khan}
\address{Ilyas Khan \\ University of Oxford \\ Mathematical Institute \\Andrew Wiles Building\\ Woodstock Rd \\ Oxford, UK \\  OX2 6GG} 
\email{Ilyas.Khan@maths.ox.ac.uk}
\thanks{\textit{2010 Mathematics Subject Classification}: Primary 53C44; Secondary 53C24, 35J15 \\
\textit{Key words and phrases:} self-shrinkers, mean curvature flow, high codimension, drift laplacian\\
The author was partially supported by a USTC Scholarship from the University of Science and Technology of China and the NSF Grants DMS-1147523 and DMS-1510401}
\begin{document} 

\begin{abstract} Let $C$ be an $m$-dimensional cone immersed in $\R^{n+m}$. In this paper, we show that if $F:M^m \rightarrow \R^{n+m}$ is a properly immersed mean curvature flow self-shrinker which is smoothly asymptotic to $C$, then it is unique and converges to $C$ with unit multiplicity. Furthermore, if $F_1$ and $F_2$ are self-expanders that both converge to $C$ smoothly asymptotically and their separation decreases faster than $\rho^{-m-1}e^{-\rho^2/4}$ in the Hausdorff metric, then the images of $F_1$ and $F_2$ coincide. 
\end{abstract}

\maketitle

\tableofcontents

\section{Introduction} 

A proper $m$-dimensional immersion $F:M^m \rightarrow \R^{n+m}$ is called a self-shrinker of the mean curvature flow if it satisfies the following non-linear elliptic equation for every $p \in M$:
\[ H(p) = -\frac{F(p)^\perp}{2} , \]
where $H$ is the mean curvature vector of the immersion and $F(p)^\perp$ is the component of the position vector perpendicular to the tangent plane $T_{F(p)}F(M) \cong F_*(T_p M)$. If $F$ satisfies this equation, then the family of rescalings 
\[F_t: M^m \rightarrow \R^{n+m}, \;\; t \in (0,1]\]
\[F_t(p) = \sqrt{t}F(p) \]
is a solution to the backwards mean curvature flow equation,
\[ -(\partial_t F_t(p))^\perp = H(p,t), \]
where $H(p,t)$ denotes the mean curvature of the immersion $F_t$ at the point $p \in M$. In this paper, we will define the rescaled immersion $\lambda F: M^m \rightarrow \R^{n+m}$ by $(\lambda F)(p) = \lambda F(p) \in \R^{n+m}$ for each $p \in M^m$ and for any $\lambda > 0$. 

Let $\gamma: \Gamma^{m-1} \rightarrow S^{n+m-1} \subset \R^{n+m}$ be a closed $(m-1)$-dimensional properly immersed submanifold of $S^{n+m-1}$. The cone over $\gamma$ is the following immersion into $\R^{n+m}$:
\[ C: \Gamma \times (0, \infty) \rightarrow \R^{n+m}\]
\[C(q, \ell) = \ell \gamma(q) \]
In this paper, we extend the results of Lu Wang in \cite{Wang} and prove the uniqueness of higher codimension self-shrinkers which are properly immersed and locally smoothly asymptotic to a given immersed cone. More precisely, we prove the following theorem:

\begin{thm}\label{maintheorem} Let $C : \Gamma^{m-1} \times (0, \infty) \rightarrow \R^{n+m}$ be a regular cone of dimension $m$ and $R_0$ a positive constant. Suppose that $F: M^m \rightarrow \R^{n+m}$ and $\tilde{F}: \tilde{M}^m \rightarrow \R^{n+m}$ are smooth, connected and proper self-shrinking immersions  into $\R^{n+m} \setminus B_{R_0}$ with boundary contained in $\partial B_{R_0}$. If $F$ and $\tilde{F}$ are smoothly asymptotic to the same cone $C$, then $F$ and $\tilde{F}$ can be reparametrized to converge to $C$ with unit multiplicity and are the same shrinker up to reparametrization. 
\end{thm}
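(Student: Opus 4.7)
The plan is to follow the unique-continuation-at-infinity strategy introduced by Wang \cite{Wang} in codimension one, upgrading each step to accommodate the higher-codimension setting where the normal bundle of a shrinker has rank $n$ rather than rank one. The first step is to set up the right graphical representation outside a large ball: because both $F$ and $\tilde F$ converge smoothly to the common cone $C$, after reparametrization each of them can be written, on the complement of a sufficiently large ball, as a normal graph over $C$ via a section $V$ (resp.\ $\tilde V$) of the rank-$n$ normal bundle $NC$, and the hypothesized smooth asymptotic convergence forces $V$, $\tilde V$ and all of their derivatives to tend to zero as the radial parameter $\ell$ of $C$ tends to infinity. Plugging the graphical ansatz into $H + F^\perp/2 = 0$ yields a quasilinear elliptic system for $V$ whose linearization at $V=0$ is an Ornstein-Uhlenbeck-type drift Laplacian $L$ acting on sections of $NC$, plus a zeroth-order Simons-type term coming from the second fundamental form of $C$.

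Subtracting the equations satisfied by $V$ and $\tilde V$ produces, for the difference $W = V - \tilde V$, a linear drift-elliptic system of the schematic form $L W + A\cdot\nabla^\perp W + B\cdot W = 0$ outside a large ball, with $A, B$ uniformly bounded together with their derivatives. The problem thereby reduces to showing that $W$, which a priori only decays smoothly at infinity, must vanish identically on the entire graphing region. The natural tool is a Carleman inequality tailored to $L$ with a Gaussian weight $e^{\phi}$ where $\phi$ grows slightly faster than $\ell^2/4$; combined with the smooth decay of $W$, such an inequality forces $W \equiv 0$ at infinity. Classical unique continuation for second-order elliptic systems then propagates the vanishing inward to the boundary of the graphing region, and standard unique continuation up to $\partial B_{R_0}$ combined with connectedness identifies $F$ and $\tilde F$ globally up to reparametrization, with unit-multiplicity convergence to $C$ being read off the graphical structure.

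The main anticipated obstacle is the Carleman inequality itself. In the scalar codimension-one case treated by Wang \cite{Wang}, one can obtain a sharp weighted estimate by direct integration by parts against the Gaussian weight. In the present setting $W$ is a section of a vector bundle with non-flat normal connection, and commuting $\nabla^\perp$ past the weight produces tensorial error terms involving the curvature of the normal connection together with the second fundamental form of $C$. The technical core of the proof will be to choose the weight $\phi$ so that its second variation dominates these vector-valued commutator errors. I expect to exploit the fact that the normal curvature and the second fundamental form of the smooth cone $C$ are uniformly controlled away from the vertex, together with a convenient choice of orthonormal frame on $NC$, to absorb these extra terms into the positive contributions coming from the convexity of $\phi$. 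Once this weighted inequality is established, the remaining arguments follow the blueprint of the codimension-one case with only cosmetic adjustments.
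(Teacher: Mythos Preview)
Your outline has a genuine gap at its very first step. You write that ``after reparametrization each of them can be written, on the complement of a sufficiently large ball, as a normal graph over $C$ via a section $V$ (resp.\ $\tilde V$) of the rank-$n$ normal bundle $NC$,'' and later that ``unit-multiplicity convergence to $C$ [is] read off the graphical structure.'' But Definition~\ref{convdef} explicitly allows the immersed shrinker to converge to $C$ with multiplicity $k\ge 1$: the end of $M$ is only a $k$-fold \emph{cover} of the truncated cone, locally a union of $k$ sections of $NC$, and there is no a priori reason it can be parametrized by a \emph{single} global section. Unit multiplicity is part of the conclusion of Theorem~\ref{maintheorem}, not a hypothesis you may invoke to set up the graph. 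The paper spends Section~3 precisely on this: it uses covering-space theory (Lemmas~\ref{topinfty}--\ref{pullbacksec}) to compare the two ends, and --- crucially --- Corollary~\ref{nomult} shows that a shrinker covering $C$ with multiplicity $k>1$ can be written as a nontrivial normal section over \emph{itself} via a deck transformation; once the vanishing machinery forces that section to be zero, the sheets coincide and multiplicity one follows. Your proposal skips this entirely.

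Beyond that gap, there are two methodological differences worth noting. First, the paper does not graph both shrinkers over the cone and subtract; it writes $F_2$ as a section $V$ of the normal bundle of the \emph{other shrinker} $F_1$ (Proposition~\ref{normbund}, Lemma~\ref{secasymp}), so that the shrinker equation for $F_2$ becomes directly an almost-eigensection equation $(\mathcal{L}_0+\tfrac12)V=Q$ for the drift Laplacian on $NM_1$ (Lemma~\ref{linearization}), with $Q$ quadratically small. This avoids having to control the difference of two nonlinear graph equations over a singular base. Second, the paper does not prove a Carleman inequality; it instead adapts Bernstein's frequency-function/asymptotic-homogeneity theory \cite{Ber} to sections of vector bundles (Section~4, Theorem~\ref{mainthm4}), concluding that $V$ is asymptotically homogeneous of degree $1$ with trace-at-infinity forced to zero by the decay $|V|\lesssim r^{-1}$ of Lemma~\ref{secasymp}. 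Your Carleman route may well be workable, but the commutator issues you anticipate are exactly what the frequency approach sidesteps, and in any case the multiplicity reduction must be handled first.
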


The method of proof runs along similar lines to the proof for embedded hypersurfaces in \cite{Ber} and \cite{Wang}, but numerous complications arise when considering immersions in high codimension. In particular, the major difficulties in the case of immersed shrinkers are the possibility of convergence with multiplicity and the existence of self-intersection points. These considerations motivate a new definition, Definition \ref{convdef}, of local smooth convergence for immersed shrinkers converging with multiplicity to a cone. One can check that in the case of embedded hypersurfaces (which must converge with multiplicity one), Definition \ref{convdef} reduces to the definition of local smooth convergence given on page 4 of \cite{Wang}. 

The first property of Definition \ref{convdef} tells us that for every compact annular region $\bar{B}_{\rho_2} \setminus B_{\rho_1} \subset \R^{n+m}$, the rescaled immersions $\lambda F$ must eventually be caught in some $\epsilon$-band around the truncated cone $C_{\rho_1,\rho_2}$. The second and third properties assume a choice of pullback $G_\lambda$ of $\lambda F$ to the normal bundle $NC_{\rho_1,\rho_2}$ of $C_{\rho_1,\rho_2}$ and describe the convergence of $\lambda F$ in $NC_{\rho_1,\rho_2}$ as a multi-section with constant multiplicity of convergence. Notice that this definition is equivalent to local Euclidean graphical convergence with arbitrary multiplicity over small embedded neighborhoods of the cone $C$. This is a very modest and natural generalization of the notion of convergence used in \cite{Wang}. Additionally, Definition \ref{convdef} is consistent with the definition of asymptotic conicality given in \cite{Berwang} if we consider transversal sections $S$ of the Grassmannian and multi-graphs defined over $S$.

After establishing definitions, we obtain estimates analogous to those found in Lemma 2.1 and 2.2 in \cite{Wang} by considering graphs of vector-valued multifunctions and utilizing the mean curvature flow system in lieu of the mean curvature flow equation. We then attempt to write one self-shrinker as a normal section of the other.  There is not an obvious way to do this--for example, the two shrinkers may converge to $C$ with different multiplicities. However, outside some large radius $R_0$, the annular subsets $F^{-1}(\R^{n+m}\setminus B_R) \subset M$ are homotopic for all $R>R_0$ and evenly cover the cone. As a result, we may appeal to the theory of covering spaces and utilize the smooth lifting property to allow us to ``unwrap" the covering and construct a normal projection to other shrinkers realizing the same covering space. One particularly significant consequence of this argument is Corollary \ref{nomult}, which implies that a shrinker with multiplicity may be written as a normal graph over itself. 

When one shrinker can be represented as a section $V$ of the normal bundle of the other, we generalize the approach of Jacob Bernstein in \cite{Ber} to show that they coincide, instead of the parabolic backwards uniqueness used in \cite{Wang}. We show that \eqref{diffop} holds--that is, this normal section $V$ is an ``almost" eigensection of the drift Laplacian $\Delta^\perp - \frac{1}{2}\nabla^\perp_{F^T}$ on the normal bundle of a shrinker $F$. In Section 4, we modify the results of \cite{Ber} so that they hold for sections of vector bundles with metric connections. In Section 5, we obtain that the normal section $V$ representing the separation between the shrinkers is actually the zero section. In particular, Corollary \ref{nomult} and the fact that $V \equiv 0$ imply that every shrinker asymptotic to a cone may be reparametrized to converge with multiplicity one, significantly simplifying the immersed picture.  This implies that any two shrinkers asymptotic to a cone $C$ in the sense of Defintion \ref{convdef} must cover $C$ equivalently, may be written as normal graphs over one another, and thus coincide. In particular, increasing the multiplicity of the cone in the sense of geometric measure theory does not give rise to new shrinkers.

A proper $m$-dimensional immersion $F:M^m \rightarrow \R^{n+m}$ is called a self-expander of the mean curvature flow if it satisfies the following non-linear elliptic equation for every $p \in M$:
\[ H(p) = \frac{F(p)^\perp}{2} . \]
By a small modification of our arguments for shrinkers, we can also prove a similar theorem for self-expanders with a certain decay rate as in \cite{Ber}. 
\begin{thm}\label{xpandertheorem} Let $C : \Gamma^{m-1} \times (0, \infty) \rightarrow \R^{n+m}$ be a regular cone of dimension $m$ and $R_0$ a positive constant. Suppose that $F: M^m \rightarrow \R^{n+m}$ and $\tilde{F}: \tilde{M}^m \rightarrow \R^{n+m}$ are smooth, connected, proper self-expanding immersions into $\R^{n+m} \setminus B_{R_0}$ with boundary contained in $\partial B_{R_0}$ that are smoothly asymptotic to $C$. If $F$ and $\tilde{F}$ satisfy
\begin{equation}\label{Hausdorffdecay} \lim_{\rho \rightarrow \infty} \rho^{m+1} e^{\frac{\rho^2}{4}} \textrm{dist}_{\mathcal{H}}(F(M) \cap \partial B_\rho)) , \tilde{F}(\tilde M) \cap \partial B_\rho)) = 0,
\end{equation}
then $F$ and $\tilde{F}$ are the same expander up to reparametrization. Here $\textrm{dist}_{\mathcal{H}}$ is Hausdorff distance.
\end{thm}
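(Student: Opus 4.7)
The plan is to reuse the shrinker architecture of Sections 2--5 almost verbatim, modifying only the step at which the sign of $H$ enters essentially, namely the unique continuation at infinity. The role of the decay hypothesis \eqref{Hausdorffdecay} is to compensate for the sign flip, which reverses the natural Gaussian weight associated to the drift operator.

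First I would run the covering space / unwrapping construction of Section 3 on $F$ and $\tilde F$. That construction is purely topological: it uses only the asymptotic conicality in the sense of Definition \ref{convdef} and the shape of the ends $F^{-1}(\R^{n+m}\setminus B_R)$, and is indifferent to the sign in $H = \pm F^\perp/2$. Together with the expander analogue of Corollary \ref{nomult}, this lets me reparametrize $F$ and $\tilde F$ so that both converge to $C$ with unit multiplicity and, after enlarging $R_0$, there is a normal section $V$ along $F$ on the end with $\tilde F = \exp_F(V)$.

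Next I would substitute $\exp_F(V)$ into the expander equation $H = F^\perp/2$ and subtract the corresponding equation for $F$. This yields the expander counterpart of \eqref{diffop},
\[
\Delta^\perp V + \tfrac12 \nabla^\perp_{F^T} V + \mathcal{E}\,V \;=\; Q\bigl(V, \nabla^\perp V, \nabla^{\perp,2} V\bigr),
\]
where $\mathcal{E}$ is a zeroth order endomorphism of $NF$ controlled by the second fundamental form of $F$ and $Q$ is a remainder that is quadratic in $V$ and its derivatives, small near infinity because $F$ is asymptotically conical. The only formal change from the shrinker case is the sign $+\tfrac12 \nabla^\perp_{F^T}$ in front of the drift term; accordingly, the weight making the leading operator symmetric is $e^{|x|^2/4}$ rather than $e^{-|x|^2/4}$. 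I would then feed this equation into the vector-bundle Carleman / frequency machinery of Section 4 to conclude $V\equiv 0$. For the shrinker operator the Gaussian weight $e^{-|x|^2/4}$ supplies weighted $L^2$-integrability automatically, whereas for the expander operator the self-adjoint weight blows up and integrability must be imposed. This is exactly the content of \eqref{Hausdorffdecay}: the bound $|V|(\rho)=o(\rho^{-m-1}e^{-\rho^2/4})$ is calibrated so that $|V|^2 e^{|x|^2/4}$ and the analogous gradient quantities are integrable on the end and so that the boundary flux terms produced by integration by parts over $\partial B_\rho$ vanish as $\rho\to\infty$. Unique continuation at infinity then forces $V\equiv 0$ on the end, and interior unique continuation propagates this to all of $M$, giving $F\equiv\tilde F$ up to reparametrization.

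The principal obstacle I expect is the Carleman / monotonicity step. The frequency and flux estimates underlying Section 4 are sign-sensitive, so one must verify that Bernstein's argument for the shrinker drift Laplacian survives reversal of the drift term when rephrased against the weight $e^{|x|^2/4}$, and that the quadratic error $Q$ is absorbed under the new weight. The exponent $\rho^{m+1}e^{-\rho^2/4}$ appearing in \eqref{Hausdorffdecay} is precisely the threshold that makes the resulting integrations by parts close up; anything slower and the boundary terms would survive. Every other ingredient -- the covering space construction of Section 3, the graphical estimates of Section 2, and interior unique continuation for elliptic systems -- transfers from the shrinker proof with no modification.
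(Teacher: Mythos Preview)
Your outline has a genuine gap at the multiplicity step. You invoke ``the expander analogue of Corollary~\ref{nomult}'' to reparametrize both $F$ and $\tilde F$ to unit multiplicity before writing one as a normal graph over the other. But Corollary~\ref{nomult} does \emph{not} carry over to expanders. Its mechanism is: represent the shrinker as a nontrivial normal section of itself via a deck transformation, then apply the unique continuation result to force that section to vanish. For expanders the unique continuation step (Corollary~\ref{Vvanishxpand}) requires the separation between the two sheets to satisfy the exponential decay \eqref{Hausdorffdecay}. The separation between distinct sheets of a \emph{single} expander is governed only by the asymptotic conicality estimates of Lemma~\ref{secasymp}, i.e.\ polynomial decay of order $|F(x)|^{-1}$, which is nowhere near $o(\rho^{-m-1}e^{-\rho^2/4})$. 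So the argument that kills multiplicity for shrinkers simply does not close for expanders, and indeed the paper remarks explicitly that expanders may converge to $C$ with multiplicity strictly greater than one.

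The paper circumvents this by not attempting multiplicity reduction at all. Instead it passes to the finite locally isometric covers $\tilde M_1$, $\tilde M_2$ associated to the subgroup $G={p_1}_*\pi_1(M_{1,R_3})\cap{p_2}_*\pi_1(M_{2,R_3})$ (Remark~\ref{univcov}). These covers have the same multiplicity over $C$, so the lifting criterion of Lemma~\ref{pullbacksec} is met and $\tilde M_{2,K}$ can be written as a normal section $V$ over $\tilde M_{1,R_4}$. The hypothesis \eqref{Hausdorffdecay} on $F$ and $\tilde F$ transfers to $\tilde F_1$ and $\tilde F_2$ (the covers are local isometries onto their images), so $V$ satisfies the decay needed for Corollary~\ref{Vvanishxpand}, whence $V\equiv 0$ and the images of $\tilde F_1,\tilde F_2$ coincide; projecting down, so do those of $F_1,F_2$. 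The rest of your sketch---the linearization with the sign-flipped drift, the role of the weight $e^{r^2/4}$, and the use of Bernstein's frequency machinery on the transformed section $\hat V=\Psi_{m+1}V$---matches the paper's route in spirit, though note the zeroth-order term is precisely $-\tfrac12 V$ rather than a general curvature endomorphism, and the threshold $\rho^{-m-1}e^{-\rho^2/4}$ enters through the condition $B(\rho)=o(\rho^{-4\lambda+m-1})$ in Theorem~\ref{xpanddecay} with $\lambda=-\tfrac12$, not merely as an $L^2$-integrability requirement.
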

\begin{rem}
Note that we allow the convergence to be of multiplicity greater than or equal to one.
\end{rem}
Roughly, one may observe that estimates given in Sections 2 and 3 depend only on the relations $|H| \simeq |F^\perp|$, and $|(\partial_t F)^\perp| = |H|$, which hold both for shrinkers flowing backwards in unit time and expanders flowing forwards in unit time. Using these estimates, we obtain a differential inequality for the linearization of the expander equation in Corollary \ref{xpanderlinearization}, which allows us to apply the theory of Bernstein. 

Our results have a number of interesting consequences. A major class of examples of high codimension self-similar mean curvature flow solutions are minimal cones. One application of Theorem \ref{maintheorem} is the following corollary:

\begin{cor}  In any dimension and codimension, the only smooth, complete, properly immersed self-shrinkers asymptotic to a minimal cone are linear subspaces. 
\end{cor}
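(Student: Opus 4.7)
My plan is to apply Theorem~\ref{maintheorem} directly, using the elementary fact that a minimal cone is itself a self-shrinker trivially asymptotic to itself. For the cone $C(q,\ell) = \ell\gamma(q)$, the position vector is a positive multiple of the radial tangent $\partial_\ell C$, so $C^\perp \equiv 0$; combined with $H \equiv 0$ (minimality), the shrinker equation $H = -\tfrac12 C^\perp$ is satisfied tautologically on $\Gamma \times (0, \infty)$. Since the rescalings $\lambda C$ coincide with $C$ as immersions, $C$ is smoothly asymptotic to itself in the sense of Definition~\ref{convdef}.

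Given a smooth, connected, complete, properly immersed self-shrinker $F: M^m \to \R^{n+m}$ smoothly asymptotic to $C$, I would pick a regular value $R_0 > 0$ of $|F|$ (which exists for a.e.\ $R_0$ by Sard) and apply Theorem~\ref{maintheorem} to each connected component of $F^{-1}(\R^{n+m} \setminus \bar B_{R_0})$ together with the corresponding piece of $C|_{\Gamma \times [R_0, \infty)}$ to which it is asymptotic. The theorem yields that each component coincides, up to reparametrization and possible multiplicity, with that piece of $C$. Letting $R_0$ range over a sequence of regular values converging to $0$, and using real-analyticity of solutions to the shrinker system (together with Corollary~\ref{nomult}) to glue consecutive annular regions, I arrive at $F(M) \setminus \{0\} = C$ as subsets of $\R^{n+m}$.

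To conclude, I would use properness and smoothness of $F$. Since $F$ is proper, $F(M)$ is closed in $\R^{n+m}$; combined with $F(M) \setminus \{0\} = C$ this forces $0 \in F(M)$ and hence $F(M) = \bar C$. At each $p \in F^{-1}(0)$ (a finite set, by properness and by discreteness of fibres of an immersion), $F$ embeds a neighborhood of $p$ smoothly as an $m$-disk through the origin lying inside $\bar C$; because $\bar C$ is a cone, scaling this disk outward shows that its tangent $m$-plane $V_p$ lies entirely inside $\bar C$. Thus $\bar C$ is a union of linear $m$-planes through the origin indexed by $F^{-1}(0)$. Finally, connectedness of $M$ together with path-connectedness of $M \setminus F^{-1}(0)$ (automatic for $m \geq 2$ since we are removing a finite set from a connected manifold of dimension at least two) rules out more than one such plane, so $\bar C$ is a single linear $m$-plane and $F$ parametrizes that subspace.

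The main obstacle is the bookkeeping for connectedness and multiplicity when $R_0$ is shrunk towards $0$. Theorem~\ref{maintheorem} requires connected domains, so one must apply it componentwise on each annular restriction and verify that the coverings of $C$ it produces for different values of $R_0$ are mutually compatible. Corollary~\ref{nomult} (which converts multiplicity into normal graphs) combined with real-analyticity of the shrinker system should be the right tools to patch these local statements into the global identity $F(M) = \bar C$. A secondary subtlety is the final reduction from a union of $m$-planes to a single plane, which requires a careful local analysis of the immersion at each preimage of the origin to ensure that the local smooth disks cannot split the connected manifold $M$ across distinct linear subspaces.
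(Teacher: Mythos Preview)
The paper does not supply an explicit proof of this corollary; it is presented simply as ``an application of Theorem~\ref{maintheorem}.'' Your strategy---observing that a minimal cone satisfies $H=0=-\tfrac12 C^\perp$ and is smoothly asymptotic to itself, then invoking Theorem~\ref{maintheorem} to force any competing self-shrinking end to coincide with the cone---is exactly the intended argument.

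Two remarks on the execution. First, sending $R_0\to 0$ and gluing annuli is unnecessary. A single application of Theorem~\ref{maintheorem} makes $F$ coincide with $C$ on the open set $M_{R_0}$; real-analyticity of self-shrinkers then gives $|H_F|^2\equiv 0$ on the connected manifold $M$, and the shrinker equation yields $F^\perp\equiv 0$ everywhere. Second, the implication ``$V_p\subset\bar C$, hence $\bar C$ is a union of the planes $V_p$'' is not yet justified---you have only one inclusion. A clean way to close both issues at once is to use $F^\perp\equiv 0$ directly: it gives $\nabla_M^2|F|^2=2g$, so $|F|^2$ has a unique nondegenerate minimum $p_0\in F^{-1}(0)$, and along any unit-speed geodesic $\gamma$ from $p_0$ one finds $|F(\gamma(s))|=|s|$ with $|F_*\gamma'(s)|=1$ and $\langle F(\gamma(s)),F_*\gamma'(s)\rangle=s$; equality in Cauchy--Schwarz forces $F(\gamma(s))=s\,F_*\gamma'(0)$. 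By Hopf--Rinow this shows $F(M)=F_*(T_{p_0}M)$ is a single $m$-plane, bypassing the union-of-planes discussion entirely.
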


We similarly obtain a weaker statement for self-expanders from Theorem \ref{xpandertheorem}.

\begin{cor} \label{xpandercor} In any dimension and codimension, if $C$ is a minimal cone, any non-trivial self-expander asymptotic to $C$ may, outside some ball $B_R$, be written as a normal multi-section over $C$ whose magnitude is asymptotically bounded below by a constant multiple of $r^{-m-1}e^{-r^2/4}$. 
\end{cor}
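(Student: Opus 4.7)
The plan is to deduce the corollary from Theorem~\ref{xpandertheorem} by using the minimal cone itself as one of the two self-expanders in the pair.

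First I would verify that a minimal cone $C$ restricted to the exterior of any ball $B_{R_0}$ is itself a smooth, proper self-expander with boundary on $\partial B_{R_0}$. For any cone $C(q,\ell)=\ell\gamma(q)$, the position vector equals $\ell\,\partial_\ell C$, so it lies in the tangent plane; hence $C^\perp\equiv 0$. Minimality of $C$ then gives $H\equiv 0=C^\perp/2$, so the expander equation is satisfied trivially. In addition, $C$ is tautologically smoothly asymptotic to itself with multiplicity one in the sense of Definition~\ref{convdef}, since on every truncated annulus the identity map provides the required graphical representation in the normal bundle, with vanishing normal section.

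Next, the fact that $\tilde{F}$ is asymptotic to $C$ already guarantees, by Definition~\ref{convdef}, that outside some sufficiently large ball $B_R$ the expander $\tilde F$ can be represented as a normal multi-section over $C$. This gives the first half of the statement without any extra work.

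For the decay lower bound, I would argue by contradiction. Suppose the separation were asymptotically dominated by $\rho^{-m-1}e^{-\rho^2/4}$, i.e.
\[
\lim_{\rho\to\infty}\rho^{m+1}e^{\rho^2/4}\,\textrm{dist}_{\mathcal H}\!\left(\tilde F(\tilde M)\cap\partial B_\rho,\; C\cap\partial B_\rho\right)=0.
\]
Applying Theorem~\ref{xpandertheorem} to the pair $F:=C|_{\R^{n+m}\setminus B_{R_0}}$ and $\tilde F$, hypothesis \eqref{Hausdorffdecay} is exactly what we just assumed, so the theorem forces $\tilde F$ to coincide with $C$ up to reparametrization. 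This contradicts the hypothesis that $\tilde F$ is non-trivial, proving that the separation is $\Omega(\rho^{-m-1}e^{-\rho^2/4})$.

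The only non-routine point is checking that the truncated minimal cone legitimately plays the role of ``$F$'' in Theorem~\ref{xpandertheorem}; smoothness and properness away from the vertex are immediate, and the self-asymptoticity of $C$ is essentially by definition, so no real obstacle is expected.
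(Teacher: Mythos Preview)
Your argument is correct and is precisely the intended one: the paper states this corollary without proof as an immediate consequence of Theorem~\ref{xpandertheorem}, and your derivation---observing that a minimal cone satisfies $H=0=C^\perp/2$ and then applying the theorem to the pair $(C,\tilde F)$ by contradiction---is exactly what is meant. The only minor refinement is that the global normal multi-section representation over $C$ is more properly justified by Lemma~\ref{covcone} (which packages the local sections of Definition~\ref{convdef} into a $k$-fold cover via normal projection) rather than by Definition~\ref{convdef} alone, but this does not affect the substance of your proof.
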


Another application of our results is to Lagrangian mean curvature flow, which has seen many recent articles on the properties of self-shrinkers and self-expanders. Self-expanding Lagrangians, and especially those asymptotic to cones, have been a recent object of interest due to the proposed program of Joyce in \cite{joyce} to resolve the singularities of the almost-calibrated Lagrangian mean curvature flow by gluing in Lagrangian expanders. Indeed, Neves proves in \cite{neves} that a singularity of an almost-calibrated Lagrangian mean curvature flow must be asymptotic to a union of special Lagrangian cones, so Corollary \ref{xpandercor} provides non-trivial information about expanders that may be used to resolve these singularities. 

Additionally, an example from Lagrangian mean curvature flow gives a bound on how much the decay condition in Theorem \ref{xpandertheorem} can be weakened. Anciaux \cite{Anc} and Joyce-Lee-Tsui \cite{jlt} discovered a family of Lagrangian self-expanders asymptotic to transversally intersecting Lagrangian planes, and the uniqueness of these expanders was later proved by Lotay-Neves in \cite{lotayneves} and by Imagi-Joyce-dos Santos in \cite{ijo}. In particular, it was found that in the immersed case, the only two expanders asymptotic to the two transversal Lagrangian planes were the Joyce-Lee-Tsui expander and the planes themselves. This is consistent with our findings, as the decay of the Joyce-Lee-Tsui expander is $O(e^{-r^2/4})$. 

While they do not occur in the almost-calibrated case, Lagrangian self-shrinkers are an important class of singularity models for the general Lagrangian mean curvature flow. A number of recent articles have been written about compact shrinkers in this setting (see \cite{compactness}, \cite{geomtori}, \cite{clifftorus}). Moreover, in \cite{cpq} and \cite{lwcones}, Lee and Wang construct explicit examples of Hamilton stationary self-shrinkers asymptotic to Hamilton stationary Lagrangian cones. By Theorem \ref{maintheorem}, these are unique.

\begin{cor}\label{uniqueLW} Assume that $\lambda_j > 0$ for $1 \le j \le k < n$ and $\lambda_j < 0$ for $k < j \le n$ are integers satisfying $\sum_{j=1}^n \lambda_j >0$. Let
\[ V_C = \{(x_1e^{i\lambda_1 s},\ldots, x_ne^{i\lambda_n s}) \; : \; 0 \le s < \pi, \sum_{j=1}^n \lambda_j x_j^2 = C, (x_1, \ldots, x_n) \in \R^n\},
\]
be the family of Hamilton stationary Lagrangians in $\C^n$ constructed by Lee and Wang. The embedded shrinker $V_{-2\sum_{j=1}^n \lambda_j}$ is the only self-shrinker asymptotic to the cone $V_0$.  
\end{cor}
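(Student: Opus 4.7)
The plan is to apply Theorem~\ref{maintheorem} directly, so the task reduces to verifying the hypotheses for $V_0$ and $V_{C_0}$ with $C_0 := -2\sum_{j=1}^n \lambda_j < 0$. First I would check that $V_0$ is a regular $n$-dimensional cone in $\C^n \cong \R^{2n}$. Setting $C=0$ in the defining relation gives $\sum_j \lambda_j x_j^2 = 0$; since the $\lambda_j$ have mixed signs, this is a nondegenerate real quadric in $\R^n$ and hence smooth away from the origin. The $S^1$-sweep $s \mapsto (e^{i\lambda_1 s},\ldots,e^{i\lambda_n s})$ commutes with scaling and rotates this quadric through a closed orbit, producing a regular $n$-dimensional cone whose link $\Gamma^{n-1}$ is a smooth properly immersed submanifold of $S^{2n-1}$.

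Next I would verify that $V_{C_0}$ is a smooth, connected, proper self-shrinker, which is established in the Lee--Wang construction \cite{lwcones}: the specific value $C_0 = -2\sum_j \lambda_j$ is chosen precisely so that $V_{C_0}$ is the Hamilton-stationary self-shrinker of the family that is embedded (others in the family self-intersect). Smoothness and properness are immediate from the explicit parametrization, and connectedness follows from the connectedness of the level set $\{\sum_j \lambda_j x_j^2 = C_0\} \subset \R^n$ combined with the connecting $S^1$-action on $s \in [0,\pi)$.

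The main step, and the principal obstacle in this outline, is verifying smooth asymptotic conicality in the sense of Definition~\ref{convdef}. I would use the explicit parametrization: rescaling $V_{C_0}$ by $\lambda^{-1}$ replaces its defining constraint by $\sum_j \lambda_j x_j^2 = C_0/\lambda^2$, while the $S^1$-sweep is invariant under scaling. As $\lambda \to \infty$ the rescaled level set converges smoothly on compact subsets of $\R^n \setminus \{0\}$ to $\{\sum_j \lambda_j x_j^2 = 0\}$, so $\lambda^{-1} V_{C_0}$ converges smoothly on every annular region $\bar{B}_{\rho_2} \setminus B_{\rho_1} \subset \R^{2n}$ to $V_0$. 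Because $V_{C_0}$ is embedded, this convergence is of unit multiplicity, and writing the approximating sheet locally as the graph of a normal section over $V_0$ (with decay rate controlled by $C_0/\lambda^2$) yields both the $\epsilon$-band containment and the smooth multi-section convergence required by Definition~\ref{convdef}.

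With the three hypotheses verified, Theorem~\ref{maintheorem} applies to the restriction of any other self-shrinker $\tilde F$ asymptotic to $V_0$ to $\C^n \setminus B_{R_0}$, for $R_0$ sufficiently large, and yields that $\tilde F$ agrees with $V_{C_0}$ outside $B_{R_0}$ up to reparametrization. Since the self-shrinker system is real-analytic and $\tilde M$ is connected, the agreement propagates to all of $\tilde M$, giving $\tilde F(\tilde M) = V_{C_0}$ and thus the claimed uniqueness. The only substantive work is the asymptotic conicality verification; everything else reduces to a direct invocation of Theorem~\ref{maintheorem}.
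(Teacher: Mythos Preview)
Your proposal is correct and follows exactly the approach the paper intends: the paper gives no explicit proof of this corollary, simply stating (just before the corollary) ``By Theorem~\ref{maintheorem}, these are unique,'' and relying on \cite{cpq} and \cite{lwcones} for the construction and properties of $V_{C_0}$. You have filled in the verification of the hypotheses of Theorem~\ref{maintheorem} (regularity of $V_0$, properness/smoothness/embeddedness of $V_{C_0}$ from Lee--Wang, and asymptotic conicality via the explicit scaling $\sum_j \lambda_j x_j^2 = C_0/\lambda^2 \to 0$) and then invoked analyticity to pass from the end to the whole shrinker, which is precisely what the paper's one-line justification presupposes.
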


This article also adds to the existing literature on general mean curvature flow in arbitrary codimension. The properties of higher codimension self-shrinkers have been studied from the perspective of the $\mathcal{F}$-functional (applied in \cite{colmin} to hypersurfaces) in the papers \cite{ArSu}, \cite{andrewswei}, and \cite{leelue}. Higher codimension solitons with the property that the principal normal is parallel have also been studied by Smoczyk \cite{smo} for self-shrinkers and by Kunikawa \cite{kuni} for translating solitons.
 
\section{Preliminaries and Basic Estimates}

When working with immersed sumbanifolds, it will often be convenient to consider Langer charts, which can be roughly thought of as disk-like neighborhoods on the source manifold $M$ of a proper immersion $F:M^m \rightarrow \R^{n+m}$. If $p \in M$, let $A_p$ be an arbitrary affine isometry of $\R^{n+m}$ that takes $F(p)$ to the origin and takes the tangent plane $F_*(T_pM) = T_{F(p)}F(M)$ to the subspace $\R^{m} \times \{0\} \subset \R^{n+m}$. Let $\pi: \R^{n+m} \rightarrow \R^{n+m}$ be the projection to  $\R^{m} \times \{0\} \subset \R^{n+m}$, given in coordinates by
\[(x_1, \ldots, x_m, x_{m+1}, \ldots, x_{n+m}) \mapsto (x_1, \ldots, x_m, 0, \ldots, 0).\]
Let $D^m_r$ denote the $m$-dimensional disk of radius $r$ in $\R^m \times \{0\}$ centered at the origin.

\begin{defn}\label{langer} The Langer chart $U_{p,r} \subset M$ centered at $p$ of radius $r$, is the component of $(\pi \circ A_p \circ F)^{-1}(D^m_r)$ containing $p$. An immersion $F: M^m \rightarrow \R^{n+m}$ is called an $(r,\alpha)$-immersion if for every $q$ in $M$, there exists a function $f_q:D^m_r \rightarrow \R^n$ with $Df_q(0)=0$ and $|Df_q| \le \alpha$ so that the image $(A_q \circ F)(U_{q,r})$ is equal to the graph of $f_q$ over $D^m_r$. In particular, the restriction $F|_{U_{q,r}}$ is an embedding.
\end{defn}

The next proposition gives a quantitative bound on the maximum radius of graphical Langer charts with derivative bounded by $\alpha$. This bound is dependent only on the given $\alpha$ and the magnitude of the second fundamental form $A$ of the immersion $F$.

\begin{prop}\label{ralpha} Let $\alpha >0$. Then for any $C^2$-immersed submanifold $F: M^m \rightarrow \R^{n+m}$ and any $r$ satisfying 
\[ r \le \frac{\alpha}{(1+ \alpha^2)^{\frac{3}{2}}} \frac{1}{\sup_M |A|},
\]
$F$ is an $(r,\alpha)$-immersion. 
\end{prop}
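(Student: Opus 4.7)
The plan is a continuity argument controlled by an ODE for how fast the tangent plane tilts as one moves along a radial ray in $\R^m$. Fix $q \in M$ and define $r^*$ to be the supremum of those $r' > 0$ for which $F$ admits a graphical representation as in Definition \ref{langer}, i.e., for which there exists $f_q : D^m_{r'} \to \R^n$ with $Df_q(0) = 0$ and $|Df_q| \le \alpha$ on $D^m_{r'}$ such that $(A_q \circ F)(U_{q,r'})$ is the graph of $f_q$. Since at $q$ the tangent plane is $\R^m \times \{0\}$ and the second fundamental form is bounded, the inverse function theorem gives $r^* > 0$. To prove the proposition it suffices to show $r^* \ge \frac{\alpha}{(1+\alpha^2)^{3/2} \sup_M |A|}$.

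Suppose this fails. The failure of maximality can only come from $|Df_q|$ approaching $\alpha$ somewhere in the closure of $D^m_{r^*}$, so I need an a priori bound on $|Df_q|$ inside $D^m_{r^*}$. Fix a unit vector $v \in \R^m$, let $\bar\gamma(t) = tv$, and lift to $\gamma(t) = (\bar\gamma(t), f_q(\bar\gamma(t)))$. Let $\theta(t)$ be the principal angle between $T_{\gamma(t)} F(M)$ and $\R^m \times \{0\}$, so that $\tan\theta(t) = |Df_q(\bar\gamma(t))|_{\mathrm{op}}$. The ambient velocity is $(v, Df_q(\bar\gamma(t))v)$, so the speed in $M$ satisfies $|\gamma'(t)|_M \le \sec\theta(t)$. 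The Gauss map $G : M \to \mathrm{Gr}(m, n+m)$ has operator norm bounded by $|A|$, so $G \circ \gamma$ moves in the Grassmannian at rate at most $\sup_M |A| \cdot \sec\theta(t)$. Since principal angle is dominated by Grassmannian distance, this gives $|\dot\theta(t)| \le \sup_M|A| \cdot \sec\theta(t)$, hence
\[
\frac{d}{dt}\tan\theta(t) \;=\; \sec^2\theta \cdot \dot\theta \;\le\; \sup_M|A| \cdot \sec^3\theta(t).
\]

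While $\tan\theta \le \alpha$, the right-hand side is at most $\sup_M|A|\cdot(1+\alpha^2)^{3/2}$, so integrating from $\theta(0) = 0$ gives $\tan\theta(t) < \alpha$ for every $0 \le t < \frac{\alpha}{(1+\alpha^2)^{3/2}\sup_M|A|}$. Since $v$ was an arbitrary unit vector, this shows $|Df_q| < \alpha$ strictly on the open disk of that radius; in particular $|Df_q|$ does not saturate along $\partial D^m_{r^*}$, so the graph extends past $r^*$, contradicting the definition of $r^*$. The main technical point is the ODE step: correctly identifying the Grassmannian rate of the Gauss map (along a lifted, therefore non-unit speed, radial ray) as $|A|\cdot\sec\theta$, which contributes one factor of $\sec\theta$; converting $|\dot\theta|$ to $|\frac{d}{dt}\tan\theta|$ yields the additional $\sec^2\theta$, producing the $(1+\alpha^2)^{3/2}$ in the denominator once one restricts to the regime $\tan\theta \le \alpha$.
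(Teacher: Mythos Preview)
The paper does not supply its own proof of this proposition: it simply cites \cite[Lemma 2.1.4]{Coo}. Your argument is the standard continuity argument (going back to Langer) that one would expect to find in the cited reference, and it is essentially correct: the key ODE $\frac{d}{dt}\tan\theta \le \sup_M|A|\,\sec^3\theta$ along lifted radial rays, together with the bootstrap on the region $\{\tan\theta\le\alpha\}$, yields exactly the radius $\alpha(1+\alpha^2)^{-3/2}/\sup_M|A|$.

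Two small points are worth tightening. First, you identify $\tan\theta$ with the \emph{operator} norm $|Df_q|_{\mathrm{op}}$; this is the correct interpretation for producing the stated constant, and is consistent with the paper's later use of Lemma~\ref{HessBound}, but you should state explicitly which norm is meant, since the paper does not. Second, your claim that ``failure of maximality can only come from $|Df_q|$ approaching $\alpha$'' implicitly uses that the projection $\pi\circ A_q\circ F$ remains a local diffeomorphism as long as $|Df_q|$ stays bounded---which is true (the tangent plane never becomes vertical), but deserves one sentence. With those clarifications the argument is complete.
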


\begin{proof}
See \cite[Lemma 2.1.4]{Coo}.
\end{proof}

We wish to study the self-shrinkers that are smoothly asymptotic to a given cone $C$, so it is necessary to define an appropriate notion of local smooth convergence to make this idea rigorous. 

\begin{defn}\label{convdef}We say that an immersion $F: M^m \rightarrow \R^{n+m}$ is smoothly asymptotic to an immersed cone $C: \Gamma \times (0,\infty) \rightarrow \R^{m+n}$ with multiplicity $k$ (which we will henceforth denote by $kC$) if the following properties hold.

\begin{enumerate} 
\item For any compact subset $K \subset \R^{n+m}\setminus \{0\}$, as $\lambda \rightarrow 0$, the image $\lambda F(M)$ converges to the cone $C(\Gamma \times (0, \infty))$ inside of $K$ in the Hausdorff metric.
\item Let $0 < \rho_1 < \rho_2$. Let $C_{\rho_1, \rho_2}$ denote the annular region $C(\Gamma \times [\rho_1, \rho_2])$, and let $N(\Gamma \times [\rho_1,\rho_2])$ be the normal bundle with fiber metric equal to the pullback metric $C^*g_{\R^{m+n}}$. Let $D^\perp(\Gamma \times [\rho_1,\rho_2])$ be the unit disk subbundle of $N(\Gamma \times [\rho_1,\rho_2])$. Consider the intersection of $\lambda F(M)$ with $\mathcal{T}_\epsilon(C_{\rho_1, \rho_2})$, an $\epsilon$-tubular neighborhood of $C_{\rho_1, \rho_2}$ in $\R^{n+m}$. For sufficiently small $\lambda$, this intersection can be pulled back to a smooth family of immersions $G_\lambda$ from the set $\Sigma := (\lambda F)^{-1}(\mathcal{T}_{\epsilon} (C_{\rho_1, \rho_2})) \subset M$ into the unit disk subbundle $D^\perp(\Gamma \times [\rho_1,\rho_2])$.
\item For every point $p \in \Gamma \times [\rho_1,\rho_2]$, there exists some $r>0$ and a Langer chart $U_{p,r}$ of the cone $C$ such that the intersection $G_\lambda(\Sigma) \cap D^\perp U_{p,r}$ can be parametrized as the image of $k$ sections $\{\sigma^1_\lambda, \ldots, \sigma^k_\lambda\}$ of the unit disk bundle, such that each $\sigma^i_\lambda : U_{p,r} \rightarrow D^\perp U_{p,r}$ converges to the zero section smoothly with respect to $\lambda$ as $\lambda \rightarrow 0$. 
\end{enumerate}
\end{defn}

\begin{rems}\label{rems-smooth-conv-defn} Notice that by condition (2) and the homogeneity of the cone, there exists an $R>0$ and a compact set $K$, such that $F(M)\setminus K$ is an immersed submanifold $G: \Sigma := M \setminus F^{-1}(K) \rightarrow N(\Gamma \times (R, \infty))  $ of the normal bundle of the truncated cone. 

Also note that in property (3), it is not necessary to insist that each neighborhood is covered by exactly $k$ sections if the link $\Gamma$ of $C$ is connected. If $U_{p_1,r}$ and $U_{p_2, r}$ have nonempty intersection and are covered by $k$ and $k'$ local sections respectively, then $k=k'$. Furthermore, the finiteness of the cover, i.e. the condition that $k < \infty$, follows from the properness of the immersion. 

If the link $\Gamma$ is disconnected, then an immersion may in principle converge with distinct multiplicity on each end. In this case, we consider the cone $C$ to be the union of the cones over each connected component of $\Gamma$. Then, we consider separately each end of $M$ asymptotic to each of these individual cones and can assume without loss of generality that Definition \ref{convdef} is satisfied on a cone with connected link.
\end{rems} 

\begin{notn} \label{annularnotn}
Given an immersion $F: M^m \rightarrow \R^{n+m}$ and a compact set $K$ containing the origin, we often consider the annular regions $F^{-1}(\R^{n+m} \setminus K) = M \setminus F^{-1}(K)$. For ease of reading, we introduce the notation $M_K := F^{-1}(\R^{n+m} \setminus K)$. For immersions indexed by $i$ or by $t$, we denote this set by
\[M_{i,K} := F_i^{-1}(\R^{n+m} \setminus K), \; \; M_{t,K} :=F_t^{-1}(\R^{n+m} \setminus K) .
\]
We will often consider $K = B_R$, the ball of radius $R$ containing the origin. For notational simplicity, we set
\[M_R := M_{B_R}, \; \; M_{i, R} := M_{i, B_R}, \;\; M_{t, R} := M_{t, B_R}.
\]
We will also denote the sets $C^{-1}(\R^{n+m}\setminus K)$ and $C^{-1}(\R^{n+m} \setminus B_R)$ by $C_K$ and $C_R$, respectively.

In the case of a cone $C$ with disconnected link (discussed in Remarks \ref{rems-smooth-conv-defn}) we will consider individual connected components (i.e. individual conical ends) of $M_{i,K}$ separately along with their corresponding asymptotic cones with connected link. In the sequel, we conflate the notation for these connected components with that for the entire $M_{i,K}$ and $C$ in the interest of simplicity. This notational convention does not affect the proofs. 
\end{notn}

\begin{lem}\label{curvasymp} Let $F: M^m \rightarrow \R^{n+m}$ be a shrinker smoothly asymptotic to the cone $C : \Gamma^{m-1} \times (0, \infty) \rightarrow \R^{n+m}$, and $F_t$ the corresponding solution to the backwards mean curvature flow. There exist $C_1>0$ and $R_1 >0 $ such that for $p \in M_{t, R_1}$, $t \in (0,1]$ and $0 \le i \le 2$,
\[ |\nabla^i A_{F_t}(p)| \le C_1 |F_t(p)|^{-i-1},\]
where $A_{F_t}$ is the second fundamental form of the immersion $F_t$.
\end{lem}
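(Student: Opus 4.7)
The plan is to reduce the estimate to the smooth boundedness of the cone's extrinsic geometry on any fixed compact annulus away from the origin, and to transfer that bound to $F$ and $F_t$ via dilation invariance.

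First, I would fix the reference annulus $A := \bar B_2 \setminus B_1$. By Definition \ref{convdef}(3), for $\lambda$ sufficiently small the rescaled immersion $\lambda F$ is, in each Langer chart of $C_{1,2}$, the union of $k$ normal sections $\sigma^1_\lambda, \ldots, \sigma^k_\lambda$ of the unit disk bundle of $C_{1,2}$, and these sections converge smoothly to the zero section as $\lambda \to 0$. Combining the uniform smoothness of $C$ on the compact annulus $C_{1,2}$ with the $C^\infty$-convergence of the sections, the standard graph formula for the second fundamental form should yield constants $C_0 > 0$ and $\lambda_0 > 0$ with
\[ |\nabla^i A_{\lambda F}(q)| \le C_0, \qquad i = 0, 1, 2, \]
for all $\lambda \in (0, \lambda_0]$ and all $q \in (\lambda F)^{-1}(A)$.

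Next, I would undo the rescaling using the dilation identity $|\nabla^i A_{\lambda F}(p)| = \lambda^{-(i+1)} |\nabla^i A_F(p)|$, which follows from $g_{\lambda F} = \lambda^2 g_F$ and the fact that the Levi-Civita and normal connections are invariant under constant conformal change. For any $p$ with $|F(p)| \ge 1/\lambda_0 =: R_1$, taking $\lambda = 1/|F(p)|$ places $\lambda F(p)$ on the unit sphere, and hence in $A$, giving $|\nabla^i A_F(p)| \le C_0\,|F(p)|^{-(i+1)}$. Since $F_t = \sqrt{t}\,F$ is itself a shrinker smoothly asymptotic to the scale-invariant cone $C$ for every $t \in (0,1]$, and since the constants $C_0, R_1$ depend only on the fixed cone $C$, the identical argument applied to $F_t$ in place of $F$ delivers the required uniform bound for all $p \in M_{t, R_1}$ and $t \in (0, 1]$.

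The technical heart of the argument is the first step: converting the smooth graphical convergence of the section pieces $\sigma^j_\lambda$ in the normal disk bundle into genuine uniform $C^2$-bounds on the ambient second fundamental form of $\lambda F$. This requires an explicit expression for the SFF of a normal graph over a smooth submanifold, together with uniform bounds on the induced metric, the normal connection, and the SFF of the cone (with two covariant derivatives of each) on the compact reference annulus $C_{1,2}$. Once these ingredients are in hand, the smooth convergence $\sigma^j_\lambda \to 0$ propagates to the claimed uniform bound on $|\nabla^i A_{\lambda F}|$, and the remainder of the proof is pure scaling.
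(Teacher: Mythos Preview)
Your approach matches the paper's: obtain a uniform bound $|\nabla^i A_{\lambda F}|\le C_0$ on a fixed compact annulus from the smooth section convergence of Definition~\ref{convdef}, then rescale by $\lambda=1/|F(p)|$. One minor inaccuracy: $F_t=\sqrt{t}\,F$ is \emph{not} a self-shrinker for $t<1$, since the shrinker equation is not dilation invariant; this is harmless because your argument only uses that $\mu F_t=(\mu\sqrt{t})F$ is smoothly asymptotic to $C$, which it is, with the same convergence data as $F$. Note also that re-running the annulus argument with $F_t$ in place of $F$ naturally produces the bound $|\nabla^i A_{F_t}(p)|\le C_0\,|F_t(p)|^{-i-1}$ rather than the stated $|F(p)|^{-i-1}$; the paper instead passes from $F$ to $F_t$ via the direct scaling identity $|\nabla^i A_{F_t}|=t^{-(i+1)/2}|\nabla^i A_F|$ combined with the already-established decay for $F$.
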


\begin{proof} The second fundamental form and its covariant derivatives $|\nabla^i A_{C(\Gamma)}|$ of the restricted cone $C: \Gamma \times [1/2, 2] \rightarrow \R^{n+m}$ are bounded, since the link $\Gamma$ is a closed, properly immersed submanifold. For any $\epsilon >0$, for sufficiently small $\lambda>0$, the intersection of $\lambda F(M)$ with the tubular neighborhood $\mathcal{T}_\epsilon (C_{1/2,2})$ can be pulled back to a submanifold $S_\lambda$ of the normal unit disk subbundle $D^\perp (\Gamma \times [1/2,2])$ with boundary contained in $D^\perp (\Gamma \times \{1/2\}) \cup D^\perp (\Gamma \times \{2\})$. By Definition \ref{convdef}, for sufficiently small $\lambda$, the submanifold $S_\lambda$ can be covered by finitely many images of local sections of $D^\perp (\Gamma \times [1/2,2])$ converging smoothly to the zero section as $\lambda \rightarrow 0$. Hence, the second fundamental form $A_{S_\lambda}$ of $S_\lambda$ converges locally smoothly to the second fundamental form $A_{C(\Gamma)}$ on $C_{1/2,2}$ with a uniform rate of convergence and there exists a constant $\lambda_1 >0$ so that there is a $\delta_1>0$ such that if $\lambda \in (0,\delta_1)$, then
\[
|\nabla^iA_{\lambda F}(p)| \le \lambda_1
\]
for $p \in (\lambda F)^{-1}(S_\lambda)$ (by an abuse of notation). Set $R_1 = \frac{2}{\delta_1}$, and choose $p \in M$ such that $|F(p)| > R_1$. Set $\lambda = \frac{1}{|F(p)|} < \frac{\delta_1}{2}$. By scaling, we have
\[
 |\nabla^iA_{F}(p)| = \lambda^{i+1} |\nabla^iA_{\lambda F}(p)| \le \lambda_1|F(p)|^{-i-1},
\]
since $\lambda F(p) \in \partial B_1(0) \subset \R^{n+m}$. If $p \in M_{t, R_1}$ for $t \in (0,1]$, then $|F(p)| > \frac{R_1}{\sqrt{t}} \ge R_1$. By the same scaling argument, setting $\lambda = \frac{1}{|F(p)|} < \frac{\sqrt{t}}{R_1} < \delta_1$, we obtain
\[|\nabla^i A_{F_t}(p)| \le (\sqrt{t})^{-i-1}|\nabla^i A_{F}(p)| \le (\sqrt{t})^{-i-1}|F(p)|^{-i-1}|\nabla^{i}A_{\lambda F}(p)|\le \lambda_1|F_t(p)|^{-i-1},
\]
which completes the proof of the statement.
\end{proof}

We collect some useful facts and notation in the following remark.

\begin{rem}\label{localgraph}
Given a point $\bar{p} = (p, 1)$ in the Riemannian manifold $\Gamma \times (0, \infty)$ with metric $C^*g_{\R^{n+m}}$, we briefly describe local graphical convergence of the family of rescaled shrinkers $\{F_t\}$ to a neighborhood of $\bar{p}$. By Definition \ref{convdef} there exists a time $t_0 > 0$, an $r_0>0$, and a Langer neighborhood $U_{\bar{p},r_0}$ of $\bar{p}$ in the cone such that for $t<t_0$, the intersection of the rescaled shrinker $F_t$ with the unit disk bundle $DU_{\bar{p},r_0}$ is represented by $k$ sections $\{\sigma^1_{\lambda}, \ldots, \sigma^k_{\lambda}\}$, where $\lambda = \sqrt{t}$. To simplify notation, we denote these sections by $\sigma^j_t$ throughout the rest of the paper.  As $t$ approaches 0, the sections $\sigma^j_t$ approach the zero section smoothly uniformly. In particular, the second fundamental forms $A_{\sigma^j_t}(q,\sigma^j_t(q))$ of $\textrm{graph}(\sigma^j_t|_{U_{\bar{p},r_0}})$ will converge uniformly to $A_{U_{\bar{p},r_0}}(q)$. 

Thus, by Proposition \ref{ralpha}, given $\alpha>0$ we can choose $r \in (0,r_0)$ so that $C_{1/2, 2}$ is an $(r,\alpha)$-immersion and each $U_{\bar{p}, r}$ is the graph of a function $f: D^m_r \rightarrow \R^n$. For $t<t_0$,  we may write the images $\sigma^j_t(U_{\bar{p},r}) \cap (\pi \circ A_{\bar{p}} ) ^{-1}(D^m_{r/2})$ as the graphs of a smoothly varying family of time dependent functions $g^j_t : D^m_{r/2} \rightarrow \R^n$ converging to $f: D^m_{r/2} \rightarrow \R^n$. In particular, we may assume that each of the $g^j_t$'s has spatial gradient bounded by $|Dg^j_t| \le 2\alpha$ for all $t<t_0$. 
\end{rem}

The next lemma describes locally-defined functions whose graphs represent the shrinker near a point $(p, \rho) \in \Gamma \times [R_2, \infty)$ (for $R_2 >0$ large) and proves some bounds on their spatial and temporal derivatives. 

\begin{lem}\label{graphest} Let $\alpha > 0$ and $p \in \Gamma$ be given and let the $k$ functions $g^i_t$ be defined for $(p,1) \in \Gamma \times (0,\infty)$ as in Remark \ref{localgraph}. There exists a large radius $R_2>R_1$, and numbers $0 < \epsilon_0 <1$ and $C_2 >0$ depending on $\alpha$, so that if $\bar{p} = (p,\rho) \in \Gamma \times [R_2, \infty)$, then the associated functions 
\[u^i(\cdot,t) := \rho g^i_{\rho^{-2}t} ( \rho^{-1} \cdot) : D^m_{\epsilon_0\rho} \rightarrow \R^n\]
are well defined for $t \in [0,1]$ and satisfy that, 
\[ |D^{j+1}u^i| \le C_2 |\rho|^{-j} \;\;\;\;\;\text{and} \;\;\;\;\;\;\; |D^j \partial_t u^i| \le C_2 |\rho|^{-1 - j} ,\]
for $j=0,1,2$. Here, $D$ and $D^2$ are the Euclidean gradient and Hessian on $\R^m$ respectively, and $\partial_t$ denotes the partial derivative with respect to $t$ fixing points in $\R^m$. 
\end{lem}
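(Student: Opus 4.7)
The plan is to realize $u^i(\cdot,t)$ as the local graphical parametrization of $F_t(M)$ near $(p,\rho) \in \R^{n+m}$ and then read off the estimates from the curvature bounds of Lemma \ref{curvasymp}. Because $C$ is a cone, the tangent planes at $\rho\gamma(p)$ and $\gamma(p)$ are parallel, so the affine coordinate system $A_{\bar p}$ from Remark \ref{localgraph} is equally well suited at both scales. Using the identity $F_t = \rho F_{\rho^{-2}t}$, the rescaled image
\[ \rho\{(x,g^i_{\rho^{-2}t}(x)) : x \in D^m_{r/2}\} = \{(y, u^i(y,t)) : y \in D^m_{\rho r/2}\} \]
parametrizes $F_t(M)$ near $(p,\rho)$.

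First I would establish well-definedness on $t \in [0,1]$ by choosing $R_2 > \max\{R_1, t_0^{-1/2}\}$ so that $s := \rho^{-2}t \in (0,t_0]$ and $g^i_s$ is defined by Remark \ref{localgraph}, and taking $\epsilon_0 < r/2$ so that $\rho^{-1}D^m_{\epsilon_0\rho} \subset D^m_{r/2}$ lies in the domain of $g^i_s$. Compactness of $\Gamma$ together with the uniform convergence in Definition \ref{convdef}(3) makes the constants uniform in $p \in \Gamma$.

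Next I would derive the spatial bounds. Any $q \in M$ with $F_t(q) \in \mathrm{graph}(u^i(\cdot,t))$ satisfies $|F_t(q)| \sim \rho$, hence $|F(q)| = |F_t(q)|/\sqrt t \sim \rho/\sqrt t$, and Lemma \ref{curvasymp} gives
\[ |\nabla^j A_{F_t}(q)| \le C_1\, t^{(j+1)/2}\rho^{-j-1} \le C_1 \rho^{-j-1} \quad \text{for } t \in (0,1]. \]
The chain rule yields $|Du^i|(y,t) = |Dg^i_s|(\rho^{-1}y) \le 2\alpha$, so the Euclidean and induced metrics on $\mathrm{graph}(u^i(\cdot,t))$ stay uniformly equivalent and the standard formulas convert the covariant-derivative bounds on $A$ into $|D^{j+1}u^i| \le C_2 \rho^{-j}$ for $j=0,1,2$; the $j=0$ case is exactly the gradient bound.

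Finally, the time derivative bounds come from the backwards MCF equation $-(\partial_t F_t)^\perp = H$: with $|Du^i|$ controlled, $\partial_t u^i$ is comparable to $H(F_t)$ at the underlying point of $F_t$, so $|\partial_t u^i| \lesssim |H(F_t)| \le C|A_{F_t}| \le C\rho^{-1}$, and differentiating in $y$ while using $|\nabla^j A_{F_t}| \le C_1\rho^{-j-1}$ gives $|D^j \partial_t u^i| \le C_2\rho^{-1-j}$. The hardest step is the conversion between Euclidean derivatives $D^j u^i$ and covariant derivatives of $A$: lower-order correction terms appear, polynomial in $|Du^i|$ and in $|\nabla^{<j}A|$, but the uniform gradient bound keeps them comparable to the leading term, so the scaling structure of Lemma \ref{curvasymp} passes directly to the bounds on $u^i$.
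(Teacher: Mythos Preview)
Your proposal is correct and follows essentially the same strategy as the paper: use compactness of $\Gamma$ to get uniform $\epsilon_0$ and $t_0$, rescale to identify $u^i$ as a graphical parametrization of $F_t$ near $(p,\rho)$, pull the spatial bounds from Lemma \ref{curvasymp} via the standard conversion between $|\nabla^j A|$ and $|D^{j+1}u|$ (the paper cites Lemmas \ref{HessBound} and \ref{HOT} for this), and then extract the time-derivative bounds from the backward MCF equation.

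The only noteworthy difference is in the last step. The paper derives the graphical backward MCF system explicitly, obtaining $-\partial_t u^\alpha = g^{ij}\partial^2_{ij}u^\alpha$, and then differentiates this identity in the spatial variables; the bounds on $|D^j\partial_t u|$ then follow purely from the already-established bounds on $|D^{j+1}u|$ and the structure of $g^{ij} = (\delta_{ij} + \partial_i u\cdot\partial_j u)^{-1}$. Your route---relating $\partial_t u$ directly to $H$ and then invoking the bounds on $|\nabla^j A_{F_t}|$---reaches the same conclusion but requires tracking the conversion between Euclidean derivatives of $\partial_t u$ and covariant derivatives of $H$, together with the projection factors. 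The paper's approach is slightly more self-contained here because once the graphical PDE is written down, no further geometric conversion is needed.
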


\begin{proof} Given $p \in \Gamma$ and $\alpha >0$, let $U_{(p,1), r}$ be a Langer chart in $\Gamma \times (1/2,2)$ satisfying the conditions given in Remark \ref{localgraph}, where $r$ is chosen such that $C_{1/2, 2}$ is an $(r, \alpha)$-immersion. Cover the link $\Gamma \times \{1\}$ by the collection of smaller neighborhoods $\{U_{(p,1), r/2}\}_{p \in \Gamma}$. The $(m-1)$-manifold $\Gamma$ is closed, so we may reduce this cover to a finite subcovering $\mathscr{U} = \{U_{\bar{p}_1, r/2}, \ldots, U_{\bar{p}_M, r/2}\}$. There exists an $\epsilon_0 > 0$ such that for every $p \in \Gamma$, the Langer chart $U_{(p,1),\epsilon_0}$ is contained in an element $U_{\bar{p}_\ell, r/2}$ of $\mathscr{U}$. The functions $\{g^i_t\}$ associated to $U_{(p,1), \epsilon_0}$ can be considered to be restrictions of the functions $\{g^i_t\}$ associated to $U_{\bar{p}_\ell, r/2}$. Thus, we may find a uniform $t_0>0$ such that for all $p \in \Gamma$ the functions $\{g^i_t\}$ associated to the chart $U_{(p,1),\epsilon_0}$ exist for $t<t_0$ and are defined over $D^m_{\epsilon_0}$.

Set $R_2 = \max \{R_1, 4/\sqrt{t_0}\}$, and let $\rho > R_2$. Consider a point $(p, \rho) \in \Gamma \times (R_2, \infty)$. Observe that $\rho^{-2} < t_0/ 16 < t_0$, so the functions $g^i_t: D^m_{\epsilon_0} \rightarrow \R^n$ associated to $U_{(p,1), \epsilon_0}$ are defined for times $t \le \rho^{-2}$. By scaling and the self-similarity of $F_t(x) = \sqrt{t}\cdot F(x)$, setting $\lambda = \sqrt{t} = \rho^{-1}$, we take a rescaling of the immersion $G_\lambda$ into the normal unit disk sub-bundle from Definition \ref{convdef}, 
\[
\rho G_{\rho^{-1}}: (\rho^{-1}F)^{-1}(\mathcal{T}_{\epsilon} (C_{ 1-\delta, 1+\delta})) \rightarrow N(\Gamma \times [\rho (1-\delta), \rho (1+\delta)]).
\]
We obtain this immersion into the $\rho$-disc subbundle of the normal bundle by simply composing $G_{\rho^{-1}}$ with a rescaling of the unit disc bundle by $\rho$. Then, we can see that the intersection of the image of $\rho G_{\rho^{-1}}$ with the cylinder over the disk $D^m_{\rho \epsilon_0}((p, \rho)) \subset T_{(p, \rho)}C$ in the tangent plane to the cone $C$ at $\bar{p} = (p,\rho)$, can be represented as the $k$ graphs of the functions
\[
u^i(\cdot, t) := \rho g^i_{\rho^{-2}t}(\rho^{-1} \cdot) : D^m_{\epsilon_0\rho} \rightarrow \R^n.
\]
Note that the shrinker $F$ can be covered by the images of such graphs by (3) in Definition \ref{convdef} and Remark \ref{localgraph}. We now establish the derivative bounds on the $u^i$'s. We first take the spatial gradient for fixed $t \in [0,1]$. 
\[D u^i(x,t) = D \rho g^i_{\rho^{-2}t}(\rho^{-1}x) = \rho D(g^i_{\rho^{-2}t})( \rho^{-1} x) \circ \rho^{-1}I = D(g^i_{\rho^{-2}t})( \rho^{-1} x)  .\]
However, we know that for $t< t_0$, $|Dg^i_t| \le 2\alpha$ and thus $|Du^i| \le 2 \alpha$. By Lemma \ref{curvasymp}, Lemma \ref{HessBound}, and Lemma \ref{HOT} we know that $|D^2 u^i| \le C_2 \rho^{-1}$ and $|D^3 u^i| \le C_2 \rho^{-2}$, where $C_2$ is a constant depending on $C_1$, $\alpha$, the dimension $m$, and codimension $n$.  

Next, we establish the bounds on $\partial_t u^i$ and its derivatives. For clarity of presentation, we do our calculations for a generic function $u^i$ which we simply denote by $u$. We have the following system for the backwards mean curvature flow of the graph of $u(\cdot,t)$ over $D^m_\frac{\epsilon_0\rho}{2}$. Consider the graph of $u$ given by the embedding $X(w) = (w, u(w,t))$, where $w \in D^m_\frac{\epsilon_0 \rho}{2}$. Then, the embedding satisfies backwards mean curvature flow if
\begin{equation} \label{BMCF} -X_t = \Delta X + a_k \partial_k X,
\end{equation}
where $a_k \partial_k X$ is a vector field generating the appropriate tangential diffeomorphisms. The equation \eqref{BMCF} yields the following system:
\[  0 = - \partial_t w^j = \frac{1}{\sqrt{g}} \frac{\partial}{\partial w^i} \big( g^{ij} \sqrt{g}\big) + a_j, \; \;\;\; j = 1,\ldots, m\]
\[ - \partial_t u^{\alpha} = \frac{1}{\sqrt{g}} \frac{\partial}{\partial w^i} \bigg( g^{ij} \sqrt{g} \frac{\partial u^\alpha}{\partial w^j}\bigg) + a_j \frac{\partial u^\alpha}{\partial w^j}, \;\;\;\; \alpha = 1,\ldots, n\]
By substituting the first system into the second, we obtain the backwards mean curvature flow system for graphs:
\[- \partial_t u^{\alpha} = g^{ij} \frac{\partial^2 u^\alpha}{\partial w^i \partial w^j}, \;\;\;\; \alpha = 1,\ldots, n\]
The metric $g_{ij}$ can be expressed as follows:
\[g_{ij} = \delta_{ij} + \partial_i u \cdot \partial_j u.\]
Since $|Du|$ is uniformly bounded, $g_{ij}$ and $g^{ij}$ are uniformly bounded. Since $|D^2u| \le C_2\rho^{-1}$, we can see immediately that $|\partial_t u| \le C\rho^{-1}$.  Next, we differentiate,
\begin{align*} -\partial_k \partial_t u^\alpha &= (\partial_k g^{ij}) \partial^2_{ij} u^\alpha + g^{ij} \partial^3_{kij} u^\alpha \\
&= g^{hi} g^{lj} \partial_k g_{hl} \partial^2_{ij} u^\alpha + g^{ij} \partial^3_{kij} u^\alpha 
\end{align*}
The derivatives $D^k g^{ij}$ can be expressed as polynomials in $D^{k} g_{ij}$ and lower order derivatives of $g^{ij}$. From this fact and our previous arguments, we deduce that that
\[| g^{hi} g^{lj} \partial_k g_{hl} \partial^2_{ij} u^\alpha + g^{ij} \partial^3_{kij} u^\alpha | \le C(\rho^{-1} \rho^{-1} + \rho^{-2}) \]
and thus $| \partial_k \partial_t u| \le C\rho^{-2}$. A similar calculation yields $| \partial^2_{lk} \partial_t u| \le C\rho^{-3}$. This completes the proof of the lemma. 
\end{proof}

\begin{cor}\label{sepdecay} Let $p \in M$ such that $F(p)$ is contained in the image of a graph $u^i(\cdot, 1): D^m_{\epsilon_0\rho} \rightarrow \R^n$ associated to a point $(q, \rho) \in \Gamma \times [R_2, \infty)$. Then for $y \in D^m_{\epsilon_0\rho}$, 
\[
|D^j u^i(y, 1) - D^j u^i(y,0)| \le C_2\rho^{-1-j},\;\;\;\; j = 0, 1, 2. 
\]
In particular, the distance between $F(p)$ and the image of the cone $C:\Gamma \times [R_2, \infty) \rightarrow \R^{n+m}$ is bounded by $C_2\rho^{-1}$.
\end{cor}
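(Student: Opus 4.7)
My plan is to leverage the time-derivative bound $|\partial_t u^i| \le C_2 \rho^{-1}$ from Lemma \ref{graphest} together with the observation that at $t = 0$ the function $u^i(\cdot, 0)$ is nothing but the local graphical parametrization of the cone itself. This reduces the corollary to a one-variable integration in $t$.

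First I would identify the initial value. By definition $u^i(w, t) = \rho\, g^i_{\rho^{-2} t}(\rho^{-1} w)$, and Remark \ref{localgraph} states that the functions $g^j_t : D^m_{r/2} \to \R^n$ converge smoothly as $t \to 0$ to the function $f$ whose graph is the cone's Langer chart $U_{(q,1), r}$ in the tangent plane $T_{(q,1)}C$. Hence $u^i(w, 0) = \rho f(\rho^{-1} w)$. By the scale invariance of the cone, the function $F_C(w) := \rho f(\rho^{-1} w)$ parametrizes exactly the cone's Langer chart $U_{(q, \rho), \epsilon_0 \rho}$ over the parallel tangent plane $T_{(q,\rho)}C$. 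In particular, every point of the form $(w, F_C(w))$ with $w \in D^m_{\epsilon_0 \rho}$ lies on the image of $C$, and after harmlessly enlarging $R_2$ by a factor depending on $\epsilon_0$ we may assume the radial parameter of every such cone point remains in $[R_2, \infty)$.

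Next I would integrate the derivative bound from Lemma \ref{graphest} in time, obtaining for every $w \in D^m_{\epsilon_0 \rho}$
\[ |u^i(w, 1) - F_C(w)| \;=\; |u^i(w, 1) - u^i(w, 0)| \;\le\; \int_0^1 |\partial_t u^i(w, s)|\,ds \;\le\; C_2 \rho^{-1}. \]
Since by hypothesis $F(p) = (w, u^i(w, 1))$ for some $w$ and $(w, F_C(w))$ lies on the cone, this Euclidean bound immediately yields $\dist(F(p), C(\Gamma \times [R_2, \infty))) \le C_2 \rho^{-1}$, as required.

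I do not anticipate any substantive obstacle: the real content is the time-derivative estimate already proved in Lemma \ref{graphest}, and the only conceptual point to not miss is that $u^i(\cdot, 0)$ literally coincides with the cone's Langer-chart parametrization $F_C$. That identification follows at once from the smooth convergence of the sections $\sigma^j_t$ to the zero section recorded in Remark \ref{localgraph}, making the corollary a short integration argument.
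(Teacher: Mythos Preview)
Your proposal is correct and follows essentially the same route as the paper: identify $u^i(\cdot,0)$ with the local graph of the cone, then integrate the bound $|\partial_t u^i|\le C_2\rho^{-1}$ from Lemma~\ref{graphest} over $t\in[0,1]$ to control $|u^i(w,1)-u^i(w,0)|$. You supply a bit more justification for why $u^i(\cdot,0)$ is the cone's graph (via the scale invariance and the convergence $g^j_t\to f$ from Remark~\ref{localgraph}), but the argument is otherwise the paper's own.
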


\begin{proof}
We know that for any $y \in D^m_{\epsilon_0 \rho}$, the vector $u^i(y, 0)$ is associated to a point on the image of the cone $C$, and $u^i(y,1)$ is associated to a nearby point on the shrinker $F$, and the displacement between them is given by the vector $u^i(y,1) - u^i(y,0)$. Using Lemma \ref{graphest}, we estimate the magnitude of this displacement.
\[
|u^i(y,1) - u^i(y,0)| = \bigg| \int_0^1 \partial_t u^i(y,t) dt \bigg| \le \int_0^1 |\partial_t u^i(y,t)|dt \le C_2|\rho|^{-1}.
\]
The bounds for the partial derivatives of the $u^i$ are proved in exactly the same way. By assumption, there is some $y \in D^m_{\epsilon_0 \rho}$ associated to $F(p)$, which completes the proof.
\end{proof}

\section{Self-Shrinking Ends as Normal Exponential Graphs} 

In this section, we will prove that if two shrinkers $F_1: M_1 \rightarrow \R^{n+m}$ and $F_2: M_2 \rightarrow \R^{n+m}$ are asymptotic to the same cone $C$ and satisfy a topological condition, then one can be written as a normal graph over the other. More precisely, there exists a radius $R_4 > R_2$ so that there is a compact $K \subset \R^{n+m}$ such that the end $M_{2,K} \subset M_2$ can be pulled back isometrically to a section of the normal bundle of the end $M_{1,R_4}$ (see Notation \ref{annularnotn} for the definition of these submanifolds). We will use a covering space argument to obtain this isometry. First, we will recall some well-known results about smooth covering spaces.

\begin{prop}[Smooth Lifting Criterion]\label{SLC}  Let $\tilde{X}, X,$ and $Y$ be path-connected smooth manifolds. Suppose $p: (\tilde{X}, \tilde{x}_0) \rightarrow (X, x_0)$ is a smooth covering space and $f: (Y, y_0) \rightarrow (X, x_0)$ is a smooth map. Then a smooth lift $\tilde{f} : (Y, y_0) \rightarrow (\tilde{X}, \tilde{x}_0)$ of $f$ exists if and only if $f_* (\pi_1 (Y, y_0)) \subset p_*(\pi_1 (\tilde{X}_0, \tilde{x}_0))$. 
\end{prop}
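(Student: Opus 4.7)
The plan is to reduce the smooth version to the well-known topological lifting criterion and then upgrade continuity to smoothness using the fact that a smooth covering is a local diffeomorphism.

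For the necessity direction, I would argue functorially: if a smooth (hence continuous) lift $\tilde f : (Y, y_0) \to (\tilde X, \tilde x_0)$ exists, then $p \circ \tilde f = f$ gives $f_* = p_* \circ \tilde f_*$ on $\pi_1$, so $f_*(\pi_1(Y,y_0)) = p_*(\tilde f_*(\pi_1(Y,y_0))) \subset p_*(\pi_1(\tilde X, \tilde x_0))$. No smoothness is used here.

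For sufficiency, I would first invoke the purely topological lifting criterion (Hatcher, Proposition 1.33, say) to produce a continuous lift $\tilde f : (Y, y_0) \to (\tilde X, \tilde x_0)$ with $p \circ \tilde f = f$. Here I use that $Y$ is a connected smooth manifold, hence path-connected and locally path-connected, so the hypotheses of the topological statement apply.

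The only remaining point is to check $\tilde f \in C^\infty(Y, \tilde X)$, and this is a local matter. Fix $y \in Y$ and choose an evenly covered open neighborhood $U \subset X$ of $f(y)$; by definition of a smooth covering, $p^{-1}(U) = \bigsqcup_\alpha V_\alpha$ with each $p|_{V_\alpha} : V_\alpha \to U$ a diffeomorphism. Let $V_{\alpha_0}$ be the sheet containing $\tilde f(y)$. By continuity of $\tilde f$ there is an open neighborhood $W \subset f^{-1}(U)$ of $y$ with $\tilde f(W) \subset V_{\alpha_0}$, and on $W$ the identity $\tilde f|_W = (p|_{V_{\alpha_0}})^{-1} \circ f|_W$ holds because both sides are continuous lifts of $f|_W$ agreeing at $y$ and $W$ is connected (shrink $W$ if necessary). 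The right-hand side is a composition of smooth maps, so $\tilde f$ is smooth on $W$, and since $y$ was arbitrary, $\tilde f \in C^\infty(Y, \tilde X)$.

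The only mildly delicate step is the passage from the topological to the smooth lift — i.e.\ ensuring the local formula $\tilde f = (p|_{V_{\alpha_0}})^{-1} \circ f$ really holds on a neighborhood, which is where one needs connectedness of $W$ and the fact that sheets are disjoint. Everything else is a direct quotation of standard covering-space theory.
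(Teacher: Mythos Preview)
Your argument is correct and is the standard one: apply the topological lifting criterion (using that a manifold is locally path-connected) and then observe that the lift is automatically smooth because a smooth covering map is a local diffeomorphism. The paper itself offers no proof of this proposition; it is stated without argument as a recalled, well-known fact from covering-space theory, so there is nothing to compare against.
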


An immediate consequence of this the uniqueness of smooth covering spaces. 

\begin{prop}\label{uniqcovspace} Let $X, \tilde{X}_1, \tilde{X}_2$ be path-connected smooth manifolds. If $p_1 : \tilde{X}_1 \rightarrow X$ and $p_2 : \tilde{X}_2 \rightarrow X$ are two smooth covering maps, then there exists a diffeomorphism $f: \tilde{X}_1 \rightarrow \tilde{X}_2$ taking a basepoint $\tilde{x}_1 \in p_1^{-1}(x_0)$ to a basepoint $\tilde{x}_2 \in p_2^{-1}(x_0)$ if and only if ${p_1}_*( \pi_1(\tilde{X}_1, \tilde{x}_1)) = {p_2}_*( \pi_1(\tilde{X}_2, \tilde{x}_2)) $.
\end{prop}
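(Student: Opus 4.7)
The plan is to deduce the proposition from the Smooth Lifting Criterion (Proposition \ref{SLC}) together with the uniqueness of smooth lifts, which is a standard byproduct of that criterion. Throughout I interpret the diffeomorphism $f$ in the statement as a morphism of covering spaces, i.e.\ satisfying $p_2 \circ f = p_1$, since without this compatibility the subgroup equality need not hold.

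For the ``only if'' direction, I assume such an $f$ exists with $f(\tilde{x}_1) = \tilde{x}_2$. Passing to fundamental groups, $p_2 \circ f = p_1$ gives ${p_1}_* = {p_2}_* \circ f_*$. Since $f$ is a diffeomorphism, the induced map $f_* : \pi_1(\tilde{X}_1, \tilde{x}_1) \to \pi_1(\tilde{X}_2, \tilde{x}_2)$ is a group isomorphism, so taking images of both sides of ${p_1}_* = {p_2}_* \circ f_*$ immediately yields the desired equality ${p_1}_*(\pi_1(\tilde{X}_1, \tilde{x}_1)) = {p_2}_*(\pi_1(\tilde{X}_2, \tilde{x}_2))$.

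For the ``if'' direction, I apply Proposition \ref{SLC} twice. Given the equality of image subgroups, I regard $p_2$ as a smooth map $(\tilde{X}_2, \tilde{x}_2) \to (X, x_0)$ and apply the Smooth Lifting Criterion to the covering $p_1$ to produce a smooth lift $\tilde{p}_2 : (\tilde{X}_2, \tilde{x}_2) \to (\tilde{X}_1, \tilde{x}_1)$ with $p_1 \circ \tilde{p}_2 = p_2$. Reversing the roles of the two coverings yields a smooth lift $\tilde{p}_1 : (\tilde{X}_1, \tilde{x}_1) \to (\tilde{X}_2, \tilde{x}_2)$ with $p_2 \circ \tilde{p}_1 = p_1$.

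To finish, I show $\tilde{p}_1$ and $\tilde{p}_2$ are mutual inverses, which will make $\tilde{p}_1$ the desired diffeomorphism $f$. The composition $\tilde{p}_2 \circ \tilde{p}_1 : \tilde{X}_1 \to \tilde{X}_1$ satisfies $p_1 \circ (\tilde{p}_2 \circ \tilde{p}_1) = p_1$ and sends $\tilde{x}_1$ to $\tilde{x}_1$, so it is a lift of $p_1$ through $p_1$ fixing the basepoint. The identity map $\mathrm{id}_{\tilde{X}_1}$ is another such lift. By the uniqueness of pointed lifts implicit in Proposition \ref{SLC} (two lifts of the same map to a connected space that agree at one point must agree everywhere), these must coincide, so $\tilde{p}_2 \circ \tilde{p}_1 = \mathrm{id}_{\tilde{X}_1}$. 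The symmetric argument gives $\tilde{p}_1 \circ \tilde{p}_2 = \mathrm{id}_{\tilde{X}_2}$. There is no real obstacle here; the only step that requires attention is the final invocation of uniqueness of lifts, which relies on the path-connectedness hypothesis.
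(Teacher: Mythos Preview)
Your proof is correct and matches the paper's approach exactly. The paper states this proposition as a well-known consequence of the Smooth Lifting Criterion and, rather than giving a proof, simply remarks afterward that the desired diffeomorphism $f$ is the lift $\tilde{p}_1$ with inverse $\tilde{p}_2$; you have filled in precisely those details, including the uniqueness-of-lifts argument needed to see that $\tilde{p}_1$ and $\tilde{p}_2$ are mutual inverses.
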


Note that given the condition ${p_1}_*( \pi_1(\tilde{X}_1, \tilde{x}_1)) = {p_2}_*( \pi_1(\tilde{X}_2, \tilde{x}_2))$, the map $f: \tilde{X}_1 \rightarrow \tilde{X}_2$ is the smooth lift $\tilde{p}_1$ and its inverse is the lift $\tilde{p}_2$. We will use the convergence of $F_1$ and $F_2$ to $C$ to find appropriate covering maps $p_1:M_{1,R_3} \rightarrow C_{K'}$ and $p_2:M_{2,K} \rightarrow C_{K'}$. Then, if ${p_1}_*( \pi_1(M_{1,R_3} , \tilde{x}_1)) = {p_2}_*( \pi_1(M_{2,K}, \tilde{x}_2))$, the lift $\tilde{p}_1 : M_{1.R_3} \rightarrow M_{2,K}$ will yield a section of the pullback bundle. We then show using our estimates that this section is close by to a normal section.

We first need to show that given a shrinker $F: M \rightarrow \R^{m+n}$ asymptotic to the cone $C$, there is a consistent notion of a ``topology at infinity." More precisely, 

\begin{lem}\label{topinfty}
Let $F:M \rightarrow \R^{m+n}$ be asymptotic to the cone $C$. Then there exists a large radius $R_3>R_2$, such that for any compact sets $K_1, K_2 \supset B_{R_3}$ which are radial with respect to the origin, 
\[
\pi_1(M_{K_1}, x_0) \cong \pi_1(M_{K_2}, x_0),
\]
where $x_0 \in M_{K_1 \cup K_2}$.
\end{lem}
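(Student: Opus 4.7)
The plan is to reduce to the statement that the inclusion $M_{K'} \hookrightarrow M_K$ is a homotopy equivalence whenever $K \subset K'$ are both compact radial subsets containing $B_{R_3}$. Assuming this, the lemma follows by applying it twice: the basepoint $x_0 \in M_{K_1 \cup K_2}$ sits inside both $M_{K_1}$ and $M_{K_2}$, and the two inclusions $M_{K_1 \cup K_2} \hookrightarrow M_{K_i}$ (for $i=1,2$) induce isomorphisms on $\pi_1$, so composing them gives the desired isomorphism $\pi_1(M_{K_1}, x_0) \cong \pi_1(M_{K_2}, x_0)$.

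To realize $M_{K'}$ as a strong deformation retract of $M_K$, I would construct a flow on $M_{R_3}$ that covers the radial dilation flow on the cone. Using Definition \ref{convdef}(3) together with Remark \ref{localgraph}, I would first show that for $R_3 > R_2$ sufficiently large, the map $\pi : M_{R_3} \to C_{R_3'}$ obtained by composing $F$ with the nearest-point projection onto the cone is a smooth $k$-fold covering onto some slightly smaller truncation $C_{R_3'}$ of the cone. The local picture over each Langer chart of $C$ is a disjoint union of $k$ graphs; constancy of the multiplicity $k$ and properness of $F$ ensure these local diffeomorphisms glue into a global covering map.

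Next, I would lift the radial vector field $t\,\partial_t$ on $\Gamma \times (0, \infty)$ through $\pi$ to a vector field on $M_{R_3}$, and let $\widetilde{\Phi}_s$ denote its flow. By Corollary \ref{sepdecay} and Lemma \ref{graphest}, for $R_3$ large the quantity $|F(\widetilde{\Phi}_s(p))|$ is strictly increasing in $s$ and grows comparably to $e^s |F(p)|$. Since each compact radial $K \supset B_{R_3}$ is star-shaped with respect to the origin, once an orbit exits $K$ it remains outside; and since $K' \supset K$ is compact, every orbit starting in $M_K$ reaches $M_{K'}$ in finite time. For each $p \in M_K$, let $s(p) \geq 0$ be the first time with $\widetilde{\Phi}_{s(p)}(p) \in M_{K'}$, with $s(p) = 0$ if $p \in M_{K'}$ already. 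Strict monotonicity together with radiality makes $s$ continuous, and the map $(p, t) \mapsto \widetilde{\Phi}_{t\,s(p)}(p)$ gives the desired deformation retract.

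The main obstacle is establishing that $\pi$ is a genuine covering rather than merely a local diffeomorphism, and doing so in a way compatible with the radial flow. Since the shrinker lies at a (small but positive) normal distance from the cone, the nearest-point projection must be verified to be globally defined and single-valued on $M_{R_3}$; this requires the tubular neighborhood of the truncated cone to have uniformly positive width, which follows from the curvature bound in Lemma \ref{curvasymp} for $R_3$ sufficiently large. The constancy of the multiplicity $k$ on connected components of the cone, noted in the remarks following Definition \ref{convdef}, then guarantees that the local description as $k$ graphs over Langer charts patches into the required global $k$-fold covering.
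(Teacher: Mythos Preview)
Your argument is correct, but the paper takes a simpler and more intrinsic route. Rather than first constructing the covering map $\pi:M_{R_3}\to C_{R_3'}$ and then lifting the radial vector field $t\,\partial_t$ from the cone, the paper works directly with the function $r(p)=|F(p)|$ on $M$: since the tangent planes of $F$ approach those of $C$, for $R_3$ large one has $\big||\nabla_F r|-1\big|<\tfrac14$, so $|\nabla_F r|$ is uniformly bounded away from zero. The negative gradient flow $\dot x=-\nabla_F r(x)$ on $M_{R_3}$ then has speed bounded above and below, and for any radial compact $K\supset B_{R_3}$ one can flow $M_K$ back to $M_{R_3}$ along these trajectories. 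Homotopy invariance of $\pi_1$ finishes the argument.

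The advantage of the paper's approach is that it avoids building the covering $\pi$ altogether (in the paper that is deferred to Lemma~\ref{covcone}, \emph{after} this lemma), and it bypasses the verification that the tubular neighborhood of $C$ has uniformly positive width, that nearest-point projection is globally single-valued, and that the lifted flow really increases $|F|$ monotonically. Your route recovers all of this, but through machinery that the intrinsic gradient-flow argument makes unnecessary. On the other hand, your approach has the virtue of making the relationship between $M_{R_3}$ and the cone explicit from the outset, which is conceptually in line with how the paper proceeds in the subsequent lemmas; it just front-loads work that the paper postpones.
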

\begin{proof}
Let $r: \R^{m+n} \rightarrow \R$ be the distance function $r(x) = |x|$ on $\R^{n+m}$. If we consider the restriction of this function to the cone $C$, we see that the tangential gradient $\nabla^C r = (Dr)^T = Dr$ (where $Dr$ indicates the Euclidean derivative), and in particular $|\nabla^C r| = |Dr| = 1$. Since the tangent planes of the shrinker $F$ approach those of the cone $C$, there is a radius $R_3 > 0$ such that $||\nabla^F r| - |\nabla^C r|| < \frac{1}{4}$. In particular $|\nabla^F r|$ is non-vanishing and uniformly bounded away from zero. Every point in $M_{R_3}$ is contained in a unique flow line of the negative gradient flow $\dot{x} = - \nabla^F r (x)$ which has velocity bounded above and below. If we consider a compact set $K \supset B_{R_3}$ that is radial with respect to the origin, we can move along the flow lines to homotope $M_K$ to $M_{R_3}$. Observing that the fundamental group is a homotopy invariant concludes the proof of the lemma.
\end{proof}

\begin{lem}\label{covcone}
For $R_3>0$ as in Lemma 3.3, and compact, radial $K \supset B_{R_3}$, the manifold $M_K$ is a $k$-fold covering space of the cone $C_{\tilde{K}}$ where $\tilde{K}$ is a compact, radial subset of $\R^{n+m}$ containing the origin. Furthermore, this covering map is realized in $\R^{n+m}$ as projection along the normal fibers of the cone. 
\end{lem}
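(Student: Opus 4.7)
The plan is to realize the covering map as the composition of $F$ with the normal projection to the cone, and to read off the local triviality directly from condition (3) of Definition \ref{convdef}. I would organize the argument in three steps.

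First, I would globalize the normal projection onto the cone. By condition (2) of Definition \ref{convdef} applied to the annular region $C_{1/2, 2}$, together with the scale invariance of the cone (as exploited in the proof of Lemma \ref{curvasymp}), for $R_3$ sufficiently large the restriction $F|_{M_{R_3}}$ pulls back through the tubular neighborhood construction to a smooth immersion $G: M_{R_3} \to N(\Gamma \times (R_3/2, \infty))$ into the normal bundle of the truncated cone (as noted in the first remark following Definition \ref{convdef}). Composing with the bundle projection $\pi_N: N(\Gamma \times (0, \infty)) \to \Gamma \times (0, \infty)$ yields the candidate map
\[
p := \pi_N \circ G : M_{R_3} \longrightarrow \Gamma \times (0, \infty),
\]
which by construction is realized in $\R^{n+m}$ as orthogonal projection of $F(x)$ along the normal fiber of $C$.

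Second, I would verify that $p$ is a $k$-sheeted smooth covering onto its image. For each point $\bar q$ in the truncated cone, condition (3) of Definition \ref{convdef} gives a Langer chart $U_{\bar q, r}$ together with $k$ smooth sections $\sigma^1, \ldots, \sigma^k$ of the unit disk subbundle whose graphs cover the intersection of $G(M_{R_3})$ with $D^\perp U_{\bar q, r}$. Since $\pi_N$ sends each $\mathrm{graph}(\sigma^i)$ diffeomorphically onto $U_{\bar q, r}$, the preimage $p^{-1}(U_{\bar q, r})$ decomposes as the disjoint union of $k$ open sets, each mapped diffeomorphically to $U_{\bar q, r}$ by $p$. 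This exhibits the evenly covered neighborhoods; global well-definedness of the integer $k$ follows from the local constancy observation in the remarks after Definition \ref{convdef} together with connectedness of $\Gamma \times (R_3/2, \infty)$.

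Third, I would identify the image of $p|_{M_K}$ as $C_{\tilde K}$ for a compact, radial $\tilde K$. Corollary \ref{sepdecay} gives that for $x \in M_K$ with $|F(x)| = \rho$ one has $|F(x) - C(p(x))| \le C_2 \rho^{-1}$, so $p$ sends $M_K$ into the complement of a compact radial set close to $K$. Picking $\tilde K$ to be a compact radial set whose boundary lies strictly inside $K$ by more than $C_2 R_3^{-1}$, we obtain $p(M_K) \supseteq C_{\tilde K}$, and the radial gradient flow of Lemma \ref{topinfty} can then be used to identify $p^{-1}(C_{\tilde K})$ with $M_K$ via a homotopy along the non-vanishing vector field $-\nabla_F r$, which preserves the sheet decomposition from the second step. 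The main obstacle is precisely here: the boundaries $\partial M_K$ and $\partial p^{-1}(C_{\tilde K})$ are a priori different, so reconciling them while keeping both the radial structure of $\tilde K$ and the sheet decomposition intact requires both the quantitative separation estimate from Corollary \ref{sepdecay} and the uniform lower bound on $|\nabla_F r|$ from Lemma \ref{topinfty}; these together yield the desired normal $k$-fold covering $M_K \to C_{\tilde K}$.
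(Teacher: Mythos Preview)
Your proposal is correct and follows the same essential approach as the paper, namely extracting the local triviality of the normal projection directly from property (3) of Definition \ref{convdef}. The paper's own proof is a single sentence: ``This follows immediately from property (3) in Definition \ref{convdef}.'' Your Step 2 is precisely this observation; Steps 1 and 3 are careful elaborations of details the paper leaves implicit.

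Your Step 3 is more cautious than necessary for how the lemma is used downstream. The subsequent arguments (Lemma \ref{pullbacksec}, Proposition \ref{normbund}) only require the covering \emph{structure} --- that is, a well-defined smooth $k$-sheeted covering map from a cofinal end of $M$ to a cofinal end of the cone, realized by normal projection --- and are insensitive to whether the domain is literally $M_K$ or some $M_{K'}$ differing from it by a thin collar. So the boundary-matching issue you flag is real but inessential: one may simply take $\tilde K$ so that $C_{\tilde K} \subset p(M_K)$ and regard $p^{-1}(C_{\tilde K})$ as the honest covering space, which by Lemma \ref{topinfty} is deformation-equivalent to $M_K$. Your invocation of Corollary \ref{sepdecay} and the gradient flow is a valid way to make this precise, but the paper is content to absorb this into ``compact, radial $\tilde K$'' without further comment.
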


\begin{proof}
This follows immediately from property (3) in Definition \ref{convdef}. 
\end{proof}

\begin{lem}\label{pullbacksec}
Let $F_1: M_1 \rightarrow \R^{n+m}$ and $F_2: M_2 \rightarrow \R^{n+m}$ be two self-shrinkers and let $R_3 = R_3(F_1, F_2)$ be the maximum of the radii $R_3>0$ given by Lemma \ref{topinfty} applied to $F_1$ and $F_2$. Let $p_1$ and $p_2$ be the projections of $M_{1, R_3}$ and $M_{2, R_3}$ in the normal bundles of $C_{K_1}$ and $C_{K_2}$, respectively, where $K_1,K_2$ are compact, radial sets containing the origin as in Lemma \ref{covcone}. If $x_0 \in C_{K_1 \cup K_2}$, $p_1(x_1) = p_2 (x_2) = x_0$, and 
\[
{p_1}_*(\pi_1(M_{1,R_3}, x_1)) = {p_2}_*(\pi_1(M_{2, R_3}, x_2)),
\]
then $p_1$ can be lifted to diffeomorphism from $M_{1, 2R_3}$ to $M_{2, \bar{K}}$, where $\bar{K}$ is some compact, radial set containing $B_{R_3}$. Furthermore, this diffeomorphism can be realized as a section of the pullback vector bundle $p_1^{-1}NC$ over $M_{1,2R_3}$.
\end{lem}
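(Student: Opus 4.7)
The plan is to apply Proposition \ref{uniqcovspace} to the normal projections from Lemma \ref{covcone} to produce $\tilde p_1$ as a covering-space diffeomorphism, and then to reinterpret it as a section of $p_1^{-1}NC$ via the fact that both $p_i$ are realized as projections along the normal fibers of $C$.

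First, to put the two covers over a common base, let $K' \supset K_1 \cup K_2$ be any compact, radial set containing $B_{R_3}$, and restrict both covering maps to
\[
p_i \big|_{p_i^{-1}(C_{K'})} : p_i^{-1}(C_{K'}) \longrightarrow C_{K'}.
\]
The inclusions $p_i^{-1}(C_{K'}) \hookrightarrow M_{i,R_3}$ are homotopy equivalences, by running the $-\nabla_{F_i} r$ gradient flow from the proof of Lemma \ref{topinfty} until each flow line exits $C_{K'}$ (the corresponding retraction is available on the cone itself since $C$ deformation retracts radially onto $C_{K'}$). Hence, after adjusting basepoints into $p_i^{-1}(C_{K'})$ along a path, the hypothesis ${p_1}_*(\pi_1(M_{1,R_3},x_1)) = {p_2}_*(\pi_1(M_{2,R_3},x_2))$ descends to the equality of the $\pi_1$-images of the restricted covers inside $\pi_1(C_{K'},x_0)$. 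Proposition \ref{uniqcovspace} then yields a diffeomorphism $\tilde p_1 : p_1^{-1}(C_{K'}) \to p_2^{-1}(C_{K'})$ with $p_2 \circ \tilde p_1 = p_1$.

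Second, I pin down the domain and codomain using Corollary \ref{sepdecay}. Replacing $R_3$ by a larger radius if necessary (harmless, since Lemma \ref{topinfty} holds for all sufficiently large radii), for $x \in M_{1,2R_3}$ with $|F_1(x)| = \rho \geq 2R_3$ we have $|F_1(x) - p_1(x)| \leq C_2/\rho$, whence $|p_1(x)| \geq 2R_3 - C_2/(2R_3) > R_3 + 1$. Taking $K' = \overline{B}_{R_3 + 1}$ places $p_1(x)$ in $C_{K'}$, so $M_{1,2R_3} \subset p_1^{-1}(C_{K'})$ and $\tilde p_1$ is defined there. Since $p_2(\tilde p_1(x)) = p_1(x)$, the same separation estimate applied to $F_2$ at $\tilde p_1(x)$ gives $|\tilde p_1(x)| \geq |p_1(x)| - C_2/|\tilde p_1(x)|$, which upon solving yields $|\tilde p_1(x)| \geq R_3$. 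Consequently, $\tilde p_1(M_{1,2R_3}) \subset M_{2,\bar K}$ for a suitably chosen compact, radial $\bar K \supset B_{R_3}$.

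Finally, since $p_1$ and $p_2$ are both projection along the normal fibers of $C$, the identity $p_2(\tilde p_1(x)) = p_1(x)$ forces $V(x) := \tilde p_1(x) - p_1(x)$ to lie in $N_{p_1(x)}C$, which is precisely the fiber of the pullback bundle $p_1^{-1}NC$ over $x$. Thus $x \mapsto V(x)$ is a smooth section of $p_1^{-1}NC$ over $M_{1,2R_3}$ that encodes $\tilde p_1$ completely. The main technical nuisance is the radial bookkeeping in the second paragraph: one must verify quantitatively, using Corollary \ref{sepdecay} twice, that $\tilde p_1$ maps $M_{1,2R_3}$ inside a genuine annular end $M_{2,\bar K}$ rather than merely into an open subset of $M_2$; everything else is formal from the covering-space theory of Propositions \ref{SLC} and \ref{uniqcovspace}.
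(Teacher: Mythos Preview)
Your proof is correct and follows the same covering-space approach as the paper: apply Proposition \ref{uniqcovspace} to produce $\tilde p_1$, then use the fact that both $p_i$ are normal-fiber projections to see that $\tilde p_1(x)$ lies in $N_{p_1(x)}C$ and hence determines a section of $p_1^{-1}NC$. You are actually more careful than the paper about restricting to a common base $C_{K'}$ and about the radial bookkeeping via Corollary \ref{sepdecay}; the only cosmetic difference is that the paper records the section as $v(x)=\tilde p_1(x)-x$ rather than your equally valid $V(x)=\tilde p_1(x)-p_1(x)$.
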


\begin{proof}
The first claim is an immediate consequence of the fact that $M_{1,R_3}$ and $M_{2,R_3}$ are covering spaces and the uniqueness of smooth covering spaces, Proposition \ref{uniqcovspace}. To see that this diffeomorphism is a section of the pullback bundle $\pi_1^{-1}NC$, with base $M_{1,2R_3}$ and fibers $N_{p_1(x)} C$ for $x \in M_{1,2R_3}$, we realize the lifted map $\tilde{p}_1 : M_{1,2R_3} \rightarrow M_{2,\bar{K}}$ locally in Euclidean space. Around each point, there is a coordinate patch of the shrinker $F_1$ which by definition can be written as a local section of the normal bundle of the cone. Over this patch of the cone lie $k$ sections representing the intersection of the shrinker $F_2$ with the normal fibers above that patch of the cone. The lift of $p_1$ chooses one of these sections, and thus the map $\tilde{p}_1$ can be represented at a point $x \in M_1$ as $\tilde{p}_1 (x) = x + v$, where $v$ is a vector in $N_{p_1(x)}C$ such that 
\[
v = (p_1(x) - x) + (p_2^{-1}\circ p_1(x) - p_1(x)) = p_2^{-1}\circ p_1(x) -x.
\] 
Note that when restricted to the section determined by the lift, $p_2$ is injective.
\end{proof}

\begin{rem} \label{univcov}We will see in Section 5 that Corollary \ref{nomult} ensures that the lifting criterion is always met in the case of self-shrinkers, possibly after a reparametrization. In the case of self-expanders, the lifting condition in Lemma \ref{pullbacksec} may not be satisfied, i.e. ${p_1}_*(\pi_1(M_{1,R_3}, x_1)) \not = {p_2}_*(\pi_1(M_{2, R_3}, x_2))$. In this case we consider the subgroup
\[G := {p_1}_*(\pi_1(M_{1,R_3}, x_1)) \cap {p_2}_*(\pi_1(M_{2, R_3}, x_2)).\]
Since both ${p_1}_*(\pi_1(M_{1,R_3}, x_1))$ and ${p_2}_*(\pi_1(M_{2, R_3}, x_2))$ have finite index in $\pi_1(C, x_0)$, by basic algebra
\[ [\pi_1(C, x_0): G], [{p_1}_*(\pi_1(M_{1,R_3}, x_1)):G],\; [{p_2}_*(\pi_1(M_{2, R_3}, x_2)): G] < \infty.
\]
Now, we may instead consider the locally isometric covering manifolds $\tilde M_{1,R_3}$ and $\tilde M_{2,R_3}$ of $M_{1,R_3}$ and $M_{2,R_3}$ respectively corresponding to subgroup $G$. Let $\tilde M_{i,R_3}$ be an $\ell$-fold cover of  $M_{i,R_3}$ with projection $\mathscr{P}_i: \tilde{M}_{i,R_3} \rightarrow M_{i,R_3}$, a local isometry. We represent $M_{i, R_3}$ locally as a collection of $k$ local sections $\{\sigma^j\}_1^k$ of the normal bundle over topological disks $U$ contained $C_{K_i}$, as in the proof of Lemma \ref{pullbacksec}. Given a section $\sigma^j$, the inverse image $\mathscr{P}_i^{-1}(\textrm{graph}(\sigma^j))$ consists of $\ell$ disjoint isometric copies of $\textrm{graph}(\sigma^j)$ in $\tilde M_{i,R_3}$. Thus, the local geometry on all relevant scales is unchanged when lifting to the covers $\tilde M_{1,R_3}$ and $\tilde M_{2,R_3}$, so in the rest of the paper, we may replace our expanders by these covers and the isometric immersions $\tilde F_i = F_i \circ \mathscr{P}_i$ whenever appropriate. Note that $\tilde M_{i, R_3}$ is a $k\ell$-fold cover of $C_{K_i}$, where $\ell$ is equal to the index $[{p_1}_*(\pi_1(M_{1,R_3}, x_1)):G]$, for $i = 1,2$. After this ``reparametrization" by a locally isometric cover, all arguments in the paper may be applied without complication to the self-expanders $\tilde{F}_1$ and $\tilde{F_2}$. 
\end{rem}

\begin{rem}\label{rem-orientability}
It may also be noted, continuing the thread of Remark \ref{univcov}, that if $M_{1,R_3}$ is non-orientable, then we may consider the locally isometric orientable double cover $\pi: \widehat{M}_{1,R_3} \rightarrow M_{1, R_3}$. If the section $\sigma: M_{1,2R_3} \rightarrow p_1^{-1}NC$ corresponds to the image of $M_{2,K}$, then we may consider the pullback bundle $\pi^*(p_1^{-1}NC)$ and the pullback section $\pi^*\sigma: \widehat{M}_{1,2R_3} \rightarrow \pi^*(p_1^{-1}NC)$. Just as in Remark \ref{univcov}, the local geometry is unchanged and we may consider the isometric immersions $\tilde{F}_i = F_i \circ \pi$ without loss of generality. This removes the need to consider integration with densities on a non-orientable manifold. 
\end{rem}

\begin{prop}\label{normbund}
Let $F_1$ and $F_2$ satisfy the hypotheses of Lemma \ref{pullbacksec} and let $\epsilon \simeq R_3^{-1}$, where the implicit constant depends only on $n$, $m$, and $C_2 = C_2(F_1, F_2)$, the maximum of the constants given by Lemma \ref{graphest}. Consider the $\epsilon$-tubular neighborhood $\mathcal{T}_\epsilon (M_{1,4R_3})$ of the zero section inside the total space $p_1^{-1} NC$ with the Euclidean pullback metric inherited from $\R^{n+m}$. There exists a compact set $K$ containing $B_{R_3}$ such that the section $\sigma: M_{1,4R_3} \rightarrow p_1^{-1} NC$ corresponding to $M_{2,K}$ is contained in $\mathcal{T}_\epsilon (M_{1,2R_3})$, and normal projection from $M_{2,K}$ to $M_{1, 2R_3}$ with respect to pullback metric on $p_1^{-1}NC$ is well-defined and injective.
\end{prop}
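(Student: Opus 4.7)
The plan is to combine the section produced by Lemma \ref{pullbacksec}, the radial separation estimate of Corollary \ref{sepdecay}, and the curvature decay of Lemma \ref{curvasymp} in a tubular-neighborhood argument, and then transfer the graphical description from the normal bundle of $C$ to that of $M_1$ via an inverse function theorem.

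First I would apply Lemma \ref{pullbacksec} to obtain the diffeomorphism $\tilde{p}_1 : M_{1,2R_3} \to M_{2,\bar K}$, realized in $\R^{n+m}$ by a section $\sigma$ of $p_1^{-1}NC$ with $F_2(\tilde{p}_1(x)) = F_1(x) + \sigma(x)$ and $\sigma(x) \in N_{p_1(x)}C$. For $x \in M_{1,4R_3}$, both $F_1(x)$ and $F_2(\tilde{p}_1(x))$ lie on the normal fiber of $C$ over $p_1(x)$, whose radial distance is at least $|F_1(x)| - C_2/R_3 \gtrsim R_3$. Hence Corollary \ref{sepdecay} gives $|F_1(x) - p_1(x)| + |F_2(\tilde{p}_1(x)) - p_1(x)| \lesssim C_2/R_3$, and the triangle inequality yields $|\sigma(x)| \lesssim C_2/R_3$. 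Choosing $\epsilon$ to be an appropriate multiple of $R_3^{-1}$ (with implicit constant depending only on $n$, $m$, $C_2$) gives the desired containment in $\mathcal{T}_\epsilon$; one then takes $K$ to be the unique radial compact set containing $B_{R_3}$ with $M_{2,K} = \tilde{p}_1(M_{1,4R_3})$, which exists because $\tilde{p}_1$ is a diffeomorphism and $F_2$ is proper.

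For the second conclusion, Lemma \ref{curvasymp} gives $|A_{F_1}| \leq C_1/|F_1| \leq C_1/(2R_3)$ on $M_{1,2R_3}$, and standard tubular-neighborhood theory produces a normal injectivity radius of $M_{1,2R_3}$ bounded below by a fixed multiple of $R_3$. Taking $R_3$ large so that $\epsilon$ is much smaller than this injectivity radius, the Euclidean $\epsilon$-neighborhood of $M_{1,2R_3}$ is the diffeomorphic image of the $\epsilon$-disk bundle of $NM_{1,2R_3}$ under $(x,v) \mapsto F_1(x) + v$, so nearest-point projection $\pi_N$ onto $M_{1,2R_3}$ is smooth and well-defined on $F_2(M_{2,K})$. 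To prove injectivity, I would set $\Phi := \pi_N \circ F_2 \circ \tilde{p}_1 : M_{1,4R_3} \to M_{1,2R_3}$; since $F_2 \circ \tilde{p}_1$ is a diffeomorphism onto $F_2(M_{2,K})$, injectivity of $\pi_N|_{F_2(M_{2,K})}$ reduces to injectivity of $\Phi$. Working in the graph coordinates of Lemma \ref{graphest}, both $F_1$ and $F_2$ are locally graphs over the same cone chart with gradient bounded by $2\alpha$, so their tangent planes differ by a small angle and both are transverse to $N_xM_1$. This transversality, together with $|\sigma| \lesssim 1/R_3$ and $|A_{F_1}| \lesssim 1/R_3$, makes $\Phi$ a local diffeomorphism by the inverse function theorem. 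Since $\Phi$ is proper and satisfies $|F_1(\Phi(x)) - F_1(x)| \leq 2\epsilon$, hence is $C^0$-close to the inclusion $M_{1,4R_3} \hookrightarrow M_{1,2R_3}$, a standard covering-space argument shows $\Phi$ is a covering of degree one and is therefore injective.

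The main obstacle is the quantitative control on $d\Phi$, since the naive bound on $|d\sigma|$ is only $O(1)$ rather than $O(1/R_3)$; the key point is to exploit that $F_1$ and $F_2$ are both graphs over the \emph{same} cone, so that their tangent planes differ by quantities controlled by the higher derivative bounds in Lemma \ref{graphest}, making the tangent space to $F_2(M_{2,K})$ transverse to $N_xM_1$ in a way that yields local invertibility. Once $\Phi$ is a local diffeomorphism, the passage to global injectivity via properness and $C^0$-closeness to the inclusion is standard.
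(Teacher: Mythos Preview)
Your first paragraph and the well-definedness of nearest-point projection follow the paper's argument: both use Corollary \ref{sepdecay} for the $\epsilon$-containment and the curvature decay of Lemma \ref{curvasymp} to control the normal geometry of $M_{1,2R_3}$.

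The injectivity argument, however, takes a different route. The paper works entirely inside a single Langer chart: writing both shrinkers as graphs $u_1,u_2$ over the same $m$-disk with $|Du_i|\le 2\alpha$, it supposes two points $\bar q_1,\bar q_2$ on $\graphh(u_2)$ project to the same point of $\graphh(u_1)$, notes that $\bar q_2-\bar q_1$ then lies in $N_{F_1(p)}U_1$, and deduces from the gradient bound on $u_1$ that the difference quotient $|u_2(q_2)-u_2(q_1)|/|q_2-q_1|$ exceeds $(1-2\alpha)/2\sqrt\alpha$, contradicting $|Du_2|\le 2\alpha$ via the mean value inequality. This is a purely local, elementary argument requiring no global topology.

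Your approach via $\Phi=\pi_N\circ F_2\circ\tilde p_1$ being a degree-one covering is plausible in spirit but has a gap in the global step. The map $\Phi$ sends $M_{1,4R_3}$ into the strictly larger manifold $M_{1,2R_3}$, so it is not a covering of the target; properness as a map into $M_{1,2R_3}$ fails at the inner boundary $\{r=4R_3\}$, since nothing prevents preimages of compact sets touching that boundary from escaping the domain. To salvage the argument you would have to restrict the codomain to the image of $\Phi$, verify that this image is connected, and then run the covering argument there---or, more directly, observe that any two preimages $x_1,x_2$ of a point $y$ lie in the small geodesic ball of radius $2\epsilon$ about $y$ (from $|F_1(\Phi(x))-F_1(x)|\le 2\epsilon$), and then invoke the $C^1$-closeness of $\Phi$ to the identity on that ball. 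But once you do this last step, you have essentially reproduced the paper's local gradient-bound contradiction, and the covering machinery is no longer needed.
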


\begin{proof}
By Corollary \ref{sepdecay}, for a sufficiently large $R_3$ and some $K$, the section $\sigma$ representing $M_{2,K}$ is $\epsilon$-close to the zero section of the normal bundle $NM_{1,R_3}$. Thus, around a point $p \in M_{1,4R_3}$ and for some $\epsilon_1 < \epsilon_0$, we can realize the $\epsilon_1 |F_1(p)|$-neighborhood of $p$ in $p_1^{-1}NC$ as a subset of an $\epsilon_1|F_1(p)|$-tubular neighborhood which is itself realized in Euclidean space $\R^{n+m}$ under the exponential map. One may choose $\epsilon$ is sufficiently small that the $\epsilon_1 F_1(p)$-tubular neighborhood contains a connected, embedded neighborhood $U_1$ of $F_1(p)$ and a connected, embedded neighborhood $U_2$ of $F_2(\sigma(p))$. Additionally, by Lemma \ref{graphest} we may represent these embedded pieces as graphs $u_1$ and $u_2$ with bounded gradient over the same $m$-disk of radius $\epsilon_1|F_1(p)|/2$. Lemma \ref{graphest} further tells us that these
\[u_i : D^n_{\frac{\epsilon_1}{2} |F_1(p)|} \rightarrow \R^{n+m} \;\;\;\;\;\; i = 1,2,
\]
are such that 
\[|D^2u_i| \le C |F_1(p)|^{-1}. \]
Then, since $\dist_{\mathcal{T}_\epsilon (M_{1,4R_3})}(\sigma(p),M_{1,4R_3}) < C |F_1(p)|^{-1}$, we may take a minimizer of distance, $q \in M_{1,4R_3} \subset \mathcal{T}_\epsilon(M_{1,4R_3})$. By integrating along paths out of $q$, for sufficiently large $R_3$, we see that the the condition $|D^2u_1| \le C_2 |F_1(p)|^{-1} < C_2R_3^{-1}$ ensures that $M_{1,4R_3}$ only intersects $\bar{B}^{n+m}_{\dist(\sigma(p),q)} (\sigma(p))$ at $q$. This proves that nearest point projection is well-defined.

Now we prove the injectivity of the nearest point projection from $M_{2,K}$ to $M_{1,4R_3}$ in the total space of the fiber bundle $p_1^{-1}NC$. Suppose that there exist two points $q_1$ and $q_2$ in the disk $D^m_{\frac{\epsilon_1}{2} |F_1(p)|}$ such that $F_1(p)$ is the nearest point in $U_1$ to both $\bar{q}_1 = (q_1, u_2(q_1))$ and $\bar{q}_2 = (q_2, u_2(q_2))$. This implies that the vector $\bar{q}_2 - \bar{q}_1$ lies in the normal space $N_{F_1(p)}U_1$. The gradient bound $|Du_1| \le 2\alpha$ implies that the inner product of the unit $n$-blades in $\Lambda^n (\R^{n+m})$ representing $N_{F_1(p)}U_1$ and the normal space $\{0\} \times \R^n$ is bounded below.
\[ |\langle N_{F_1(p)}U_1, \{0\} \times \R^n \rangle| \ge 1 - 2\alpha .
\]
Consequently, the projection of the vector $\bar{q}_2 - \bar{q}_1$ to the normal space $\{0\} \times \R^{n}$, which we denote $(\bar{q}_2 - \bar{q}_1)^\perp$, has magnitude bounded below 
\[
|(\bar{q}_2 - \bar{q}_1)^\perp|\ge (1-2\alpha)|\bar{q}_2 - \bar{q}_1|.
\]
Similarly, we can bound the magnitude of the tangential component, $(\bar{q}_2 -\bar{q}_1)^T = q_2 - q_1$ above.
\[ 
|(\bar{q}_2 - \bar{q}_1)^T| \le 2\sqrt{\alpha}|\bar{q}_2 - \bar{q}_1|.
\] 
Because $(\bar{q}_2 - \bar{q}_1)^\perp = u_2(q_2) - u_2(q_1)$, we can estimate the difference quotient
\[
\frac{|u_2(q_2) - u_2(q_1)|}{|q_2 - q_1|} \ge \frac{1 - 2\alpha}{2\sqrt{\alpha}}.
\]
Let $\mu = (q_2 - q_1) / |q_2 - q_1|$ be the unit vector pointing in the same direction as $q_2 - q_1$. The mean value inequality for vector valued functions tells us that there is a $t_0 \in (0,1)$ such that at $q_3 = (1-t_0)q_1 + t_0 q_2 \in D^m_{\frac{\epsilon_1}{2} |F_1(p)|}$, the directional derivative $D_{\mu} u_2$ has magnitude bounded below:
\[|D_{\mu} u_2(q_3)| \ge \frac{|u_2(q_2) - u_2(q_1)|}{q_2- q_1}
\] 
Thus, $|D u_2(q_3)| \ge (1 - 2\alpha)/2\sqrt{\alpha} > 2\alpha$, for $\alpha$ sufficiently small. This is a contradiction--therefore, normal projection from $M_{2,K}$ to $M_{1,2R_3}$ with respect to pullback metric on $p_1^{-1}NC$ is well-defined and injective inside the tubular neighborhood $\mathcal{T}_\epsilon (M_{1,2R_3}) \subset p_1^{-1}NC$.
\end{proof}

This implies that, perhaps with a slightly modified compact set $K$, $M_{2,K}$ can be written as a section $V$ of the normal bundle of $M_{1,R_4}$, where $R_4> 2R_3$. 

\begin{cor}\label{nomult}
Let $F:M \rightarrow \R^{n+m}$ be a self-shrinker and let the projection map $p: M_{R_3} \rightarrow C_K$ be given by orthogonal projection in the normal bundle of $C_K$. Let $\gamma \in \pi_1(C_K, x_0)$ be a non-trivial deck transformation of $M_{R_3}$, considered as a covering space of $C_K$. Let $F_1 = F$, and define the shrinker $F_2: M_{R_3} \rightarrow \R^{n+m}$ by $x \in M_{R_3} \mapsto F(\gamma(x))$. Let $p_1$ and $p_2$ be the standard normal projections to $C_K$ restricted to the images of $F_1$ and $F_2$ respectively.  The previous two lemmas allow us to write $M_{R_4}$ as a non-trivial section of its own normal bundle. 
\end{cor}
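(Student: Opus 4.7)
The plan is to apply Lemma \ref{pullbacksec} and Proposition \ref{normbund} to the pair $(F_1,F_2) = (F, F\circ\gamma)$, and then verify that the resulting normal section is non-trivial precisely because $\gamma$ is non-trivial. The first step is to observe that since $\gamma$ is a deck transformation of $p_1 \colon M_{R_3} \to C_K$, we have $p_1 \circ \gamma = p_1$, and therefore
\[ p_2(x) = \pi_{NC}(F(\gamma(x))) = p_1(\gamma(x)) = p_1(x), \]
so $p_1$ and $p_2$ coincide as covering maps $M_{R_3} \to C_K$. In particular, $M_{1,R_3} = M_{2,R_3} = M_{R_3}$ as subsets of $M$, and choosing basepoints $x_1 = x_2$ makes the subgroup condition
\[ (p_1)_* \pi_1(M_{1,R_3},x_1) = (p_2)_* \pi_1(M_{2,R_3},x_2) \]
of Lemma \ref{pullbacksec} automatic. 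Thus Lemma \ref{pullbacksec} produces a lift $\tilde p_1 \colon M_{1,2R_3} \to M_{2,\bar K}$, which by uniqueness of lifts with matching basepoints is the identity on the common underlying manifold.

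Next I would read off the associated section of the pullback bundle $p_1^{-1}NC$ using the formula from the proof of Lemma \ref{pullbacksec}: at $x \in M_{1,2R_3}$, the displacement vector is
\[ v(x) = F_2(\tilde p_1(x)) - F_1(x) = F(\gamma(x)) - F(x) \in N_{p_1(x)}C. \]
This vector lies in the normal fiber because $F(\gamma(x))$ and $F(x)$ both project to $p_1(x)$ under normal projection to the cone. The key point is that $v$ is not identically zero: deck transformations of a covering space act freely, so $\gamma(x) \neq x$ for every $x$, and over any small patch of $C$ the $k$ sheets of $F(M_{R_3})$ are represented by $k$ disjoint local sections of $NC$, so $F(\gamma(x)) \neq F(x)$ for $x$ with $|F(x)|$ large enough. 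Corollary \ref{sepdecay} applied to both $F_1$ and $F_2$ gives $|v(x)| \le 2C_2 |F(x)|^{-1} \le 2C_2 R_3^{-1}$, so by taking $R_3$ large the section $v$ lies inside the $\epsilon$-tubular neighborhood $\mathcal{T}_\epsilon(M_{1,4R_3})$ required by Proposition \ref{normbund}.

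At this stage, Proposition \ref{normbund} directly converts $v$ into a non-trivial section $V$ of the normal bundle $NM_{1,R_4}$, for some $R_4 > 2R_3$, whose graph equals (a subset of) $F_2(M_{2,K})$. Since $F_2(M_{2,K}) = F(\gamma(M_K)) = F(M_K) \subset F(M)$, the graph of $V$ lies in the image of $F$ itself, which is exactly the assertion that $M_{R_4}$ is realized as a non-trivial section of its own normal bundle. The main subtlety I expect is the one addressed in the second paragraph above: the lift furnished by Lemma \ref{pullbacksec} is the identity on source points, but because the two parametrizations $F_1$ and $F_2 = F\circ\gamma$ differ on $M_{R_3}$, this identity lift produces a genuinely non-zero vector $v(x)$; this is the single place where the non-triviality of $\gamma$ is essential, and once it is noted the rest follows mechanically from the preceding two results.
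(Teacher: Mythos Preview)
Your argument is correct and follows essentially the same route as the paper: you verify that $p_1=p_2$ because $\gamma$ is a deck transformation, so the subgroup hypothesis of Lemma \ref{pullbacksec} holds trivially, the lift is the identity on the source, and the resulting displacement $F(\gamma(x))-F(x)$ is the section in $p_1^{-1}NC$, to which Proposition \ref{normbund} applies. The paper's proof says exactly this in one sentence (``the lift \ldots is realized as projection along the normal fibers of $C$ from a sheet $U$ to the sheet $\gamma U$''); your version simply unpacks the formula for $v$ and makes the free action of $\gamma$ explicit.
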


\begin{proof}
The deck transformation permutes the sheets of the $k$-fold covering space without fixed points, so the lift in Lemma \ref{pullbacksec} of the projection map is realized as projection along the normal fibers of $C$ from a sheet $U$ to the sheet $\gamma U$. The statement is immediate from this remark and the previous lemmas.
\end{proof}

In the following remark, we introduce several conventions we will use to calculate derivatives on the various bundles associated to the shrinker.

\begin{rem}\label{rem-bundle-connection-convention} In the remainder of this section, it often will be useful to move between an intrinsically defined vector field on $TM_{1,R}$ or $NM_{1,R}$ and its realization in $\R^{n+m}$. Thus, we recall here the conventions given in \cite[\S 2]{mcfhigher} for the induced connections on the tangent and normal bundles, $TM_{1,R}$ and $NM_{1,R}$. The Levi-Civita connection $\nabla^{F_1}$ on $TM_{1, R}$ induced by the immersion $F_1 : M_{1, R} \rightarrow \R^{n+m}$ is defined by the formula
\[
(F_1)_* \big(\nabla^{F_1}_X Y\big) = \Big(D_{(F_1)_*(X)} \overline{(F_1)_*(Y)}\Big)^T,
\]
where $X,Y \in \Gamma(TM_{1, R})$, $\overline{(F_1)_*(Y)}$ is an arbitrary extension of $(F_1)_*(Y)$ to an open neighborhood in $\R^{n+m}$, the operator $D$ represents the standard differentiation of vector fields in $\R^{n+m}$, and the superscript $\;^T$ denotes projection to $(F_1)_*(TM)$. Similarly, the induced connection $\nabla^\perp$ on the normal bundle $NM_{1,R}$ is given by 
\[
\nabla^\perp_X \nu = \Big(D_{(F_1)_*(X)} \bar{\nu}\Big)^\perp
\]
where $X \in \Gamma(TM_{1, R})$, the vector field $\nu \in \Gamma(NM_{1, R})$ is identified with its realization in the tangent space of $\R^{n+m}$ and arbitrarily extended to an open set by $\bar \nu$, and $\;^\perp$ is projection to the orthogonal complement of $(F_1)_*(TM)$.
\end{rem}

Let $V$ be a section of the normal bundle $NM_{1,R}$ representing the shrinker $F_2$ in the sense of Lemma \ref{normbund}. We now obtain estimates on $V$ and its derivatives.

\begin{lem}\label{secasymp}
For $R_4 > 8R_3$, there exists $C_3 >0$ such that, for the section $V: M_{1,R_4} \rightarrow N M_{1,R_4}$ representing $M_{2,K}$, a point $x \in M_{1,R_4}$, and $0\le i \le 2$,
\begin{equation}\label{Vdecay}
|(\nabla^\perp)^i V(x)| \le C_3 |F_1(x)|^{-1-i}.
\end{equation}
where $\nabla^\perp$ is the induced connection on the normal bundle $NM_{1, R_4}$.
\end{lem}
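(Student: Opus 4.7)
The plan is to pass to the graphical representation of Lemma \ref{graphest}, exploit the fact that $F_1$ and $F_2$ are asymptotic to the \emph{same} cone in order to bound the difference of their graph functions, and finally convert the resulting Euclidean derivative bounds into covariant derivative bounds on $NM_1$.

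Fix $x \in M_{1,R_4}$ and set $\rho := |F_1(x)|$. Since $\rho > 8R_3$, I would choose a cone point $(p, \rho') \in \Gamma \times [R_2, \infty)$ with $\rho/2 \le \rho' \le 2\rho$ whose Langer chart from Remark \ref{localgraph} contains the sheets of both $F_1$ and $F_2$ passing through $F_1(x)$. Applying Lemma \ref{graphest} to each $F_i$ supplies graph functions
\[ u_i : D^m_{\epsilon_0 \rho'/2} \times [0,1] \to \R^n, \qquad i = 1, 2, \]
where $u_i(\cdot, 1)$ represents $F_i$ near the chosen cone point and $u_1(\cdot, 0) = u_2(\cdot, 0) = f$ is the common graph function of $C$. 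The bounds $|D^j \partial_t u_i| \le C_2 (\rho')^{-1-j}$ hold for $j = 0, 1, 2$.

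The key observation is that, since $u_1$ and $u_2$ agree at $t = 0$, the fundamental theorem of calculus in $t$ yields, for each $y$ in the common disk,
\[ D^j (u_2 - u_1)(y, 1) = \int_0^1 D^j (\partial_t u_2 - \partial_t u_1)(y, t)\, dt, \]
and therefore $|D^j(u_2 - u_1)(\cdot, 1)| \le 2C_2 \rho^{-1-j}$ for $j = 0, 1, 2$. By Proposition \ref{normbund}, the section $V$ at the point corresponding to $y' \in D^m_{\epsilon_0 \rho'/4}$ is the unique displacement vector $V(y') = (y(y') - y',\, u_2(y(y'), 1) - u_1(y', 1))$ normal to $\graphh(u_1(\cdot, 1))$ at $(y', u_1(y', 1))$. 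The gradient bound $|Du_1| \le 2\alpha$ makes the implicit map $y' \mapsto y(y')$ smooth with uniformly bounded derivatives, so the Euclidean bounds on $u_2 - u_1$ transfer to same-order bounds on $|V|$ and its Euclidean derivatives $|D^j V|$ in the chart.

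The last step is to translate the Euclidean derivative bounds into covariant derivative bounds $(\nabla^{F_1})^i V$ on $NM_1$. The induced metric $g_{ij} = \delta_{ij} + \partial_i u_1 \cdot \partial_j u_1$ on $M_1$ has Christoffel symbols controlled by $|D^2 u_1| \le C_2 \rho^{-1}$, while the normal connection one-form of $NM_1$ is controlled by the second fundamental form $A_{F_1}$, which by Lemma \ref{curvasymp} satisfies $|\nabla^j A_{F_1}| \le C_1 \rho^{-1-j}$. Expanding $(\nabla^{F_1})^i V$ in terms of lower-order Euclidean derivatives of $V$, Christoffel symbols, and $A_{F_1}$, every correction term carries at least one extra factor of $\rho^{-1}$ and is thus of strictly lower order than the leading term. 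The desired estimate \eqref{Vdecay} then follows with $C_3$ depending only on $n$, $m$, $C_1$, $C_2$, and $\alpha$. The main obstacle is the careful bookkeeping in this last step: one must systematically track the error terms produced when passing from Euclidean derivatives to intrinsic covariant derivatives, but the asymptotic flatness provided by Lemma \ref{curvasymp} guarantees that all such corrections preserve the decay rate.
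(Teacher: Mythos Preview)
Your approach is essentially the paper's: both pass to the graphical charts of Lemma~\ref{graphest} over the cone, both use the crucial fact that $u_1(\cdot,0)=u_2(\cdot,0)$ together with the bounds $|D^j\partial_t u_i|\le C_2\rho^{-1-j}$ to control $D^j(u_2-u_1)$ (this is exactly the paper's estimate \eqref{integrationbound}), and both then convert to covariant bounds using $|\nabla^j A_{F_1}|\le C_1\rho^{-1-j}$.

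The one place where you are too casual is the sentence ``the implicit map $y'\mapsto y(y')$ is smooth with uniformly bounded derivatives, so the Euclidean bounds on $u_2-u_1$ transfer to same-order bounds on $|D^jV|$.'' Mere boundedness of $Dy$ is not enough: from $D_kV=(D_ky-e_k,\,Du_2\,D_ky-D_ku_1)$ you see that you need $|D^j(y(y')-y')|\le C\rho^{-1-j}$, not just $|D^jy|\le C$. This stronger decay does hold---it comes from differentiating the normality constraint $(y-y')_i+(u_2(y)-u_1(y'))\cdot\partial_iu_1(y')=0$ and absorbing the $O(\alpha)$ terms---but it must be argued, not assumed. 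The paper handles precisely this step by choosing a normal frame $\{\mathbf n_\beta\}$ with $\nabla^\perp\mathbf n_\beta(x)=0$, writing the explicit system \eqref{diftan}--\eqref{difnor}, and inverting the coefficient matrix $(\mathbf n_\beta\cdot\mathbf e_\alpha-\partial_k\tilde u_1^\alpha(\mathbf n_\beta\cdot\mathbf e_k))_{\alpha\beta}$; this is the implicit-function computation you are gesturing at, made concrete. Once you acknowledge that the needed input is $|D^j(y-y')|\le C\rho^{-1-j}$ and indicate how it follows from the constraint equation, your sketch is complete and matches the paper's argument.
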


\begin{proof}
The strategy of the proof is along the same lines as \cite[Lemma 2.3]{Wang}, but written in more general language and modified to accommodate the presence of a non-trivial normal bundle. 

Note that the case $i=0$ is immediate since $|V(x)|$ represents the distance from $x$ to $M_{2,K}$ in the normal bundle $N M_{1,R_4}$ and Corollary \ref{sepdecay} implies that this distance is bounded proportional to $|F_1(x)|^{-1}$. Thus, we begin with the $i=1$ case.

Let $x_0 \in M_{1,R_4}$ and choose a local frame $\{\mathbf{n}_\beta (x) \}_{\beta \in \{1, \ldots, n\}}$ of the normal bundle defined in a neighborhood of $x_0$ which will be made explicit at a later point in the proof.

We consider three points associated to $x_0$. 
\begin{itemize}
\item $z_0 := p_1(x_0)$, where $p_1$ is the standard projection from $M_{1,R_4}$ to the cone $\Gamma \times [R_4, \infty)$.
\item $\zeta_0 := p_2^{-1}(z_0) \in M_{2, K}$, where $p_2^{-1}$ is the local inverse of projection to the cone from $M_{2,K}$ described in the proof of Lemma \ref{pullbacksec}.
\item $y_0 := F_2^{-1}(F_1(x_0) + V^\beta(x_0) \mathbf{n}_\beta(x_0)) \in M_{2, K}$, the nearest point projection from $F_1(x_0)$ to $F_2(M_{2,K})$. 
\end{itemize}

For sufficiently large $R_4 > 0$, we may apply Lemma \ref{graphest} and represent a neighborhood of $F_1(x_0)$ as the graph of a function $u_1$ defined on a domain $\mathcal{D} := D^m_{(\epsilon_0/2)|F_1(x_0)|}(z_0)$ in the tangent space $T_{z_0} C$.  Similarly, a neighborhood of $F_2(\zeta_0)$ can be represented as the graph of a function $u_2$ defined on the same domain $\mathcal{D} \subset T_{z_0} C$. By the $i=0$ case and Corollary \ref{sepdecay}, the distance between $y_0$ and $\zeta_0$ is bounded of order $O(|F_1(x_0)|^{-1})$, so $F_2(y_0)$ is contained in the graph of $u_2$: that is, there exists $q_0 \in T_{z_0} C$ such that $(q_0, u_2(q_0)) = F_2(y_0)$.

We will explicitly parametrize this correspondence between points in the graphs of $u_1$ and $u_2$ so that we can differentiate the nearest point projection from the image of $F_1$ to the image of $F_2$ with respect to the coordinates on the domain $\mathcal{D} \subset T_{z_0}C$. Assume without loss of generality that $T_{z_0}C \subset \R^{n+m}$ coincides with $\R^m \times \{0\}$ and is parametrized by coordinates $(p_1, \ldots, p_m)$ where $z_0$ coincides with $(0,\ldots, 0)$. Let $\{ \be_1,\ldots, \be_{n+m} \}$ be an orthonormal frame for $\R^{n+m}$ such that if $p \in \R^{m} \times \{0\}$, then $p = p_1 \be_1 + \cdots p_m \be_m$. Given $p \in \mathcal{D}$ and $\bar{p} := (p, u_1(p))$ in the image of $F_1$, let $q$ be defined such that $(q, u_2(q)) = \bar{p} + V^\beta(\bar p)\bn_\beta(\bar p)$. That is to say,
\[
(q, u_2(q)) - (p, u_1(p)) = V^\beta(\bar p)\bn_\beta(\bar p).
\]
Another way to see this relation is that $q-p$ is the projection of $V(\bar p)$ to $T_{z_0}C$ and $u_2(q) - u_1(p)$ is the projection of $V(\bar p)$ to the orthogonal complement $(T_{z_0}C)^\perp = \{0\} \times \R^n \subset \R^{n+m}$. This relation between $p$ and $q$ is a system of equations which we restate for clarity.
\begin{equation}\label{eq-tangentpart}
q_h (p) = p_h + \mathbf{e}_h \cdot  \mathbf{n}_\beta(\bar p) V^\beta(\bar p) \;\;\;\; h = 1, \ldots m
\end{equation}
\begin{equation}\label{eq-normalpart}
u^\alpha_2 (q) = u^\alpha_1(p) +  \mathbf{e}_\alpha \cdot \mathbf{n}_\beta(\bar p) V^\beta (\bar p)\;\;\;\; \alpha = 1, \ldots, n.
\end{equation}
The image of $F_1$ in a neighborhood of $F_1(x_0)$ is parametrized by the domain $\mathcal{D}$ via the projection diffeomorphism $ p \xmapsto{F} \bar p = (p, u_1(p))$. Thus, given the vector field $V(\bar p)$ defined on the image of $F_1$ in a neighborhood of $F_1(x_0)$, the following derivatives are equivalent by the chain rule:
\[
\partial_i (V (p,u_1(p))) = D_{F_* \be_i(p)} V (\bar p),
\]
where $\bar p$ is considered as an independent variable and $F_* \be_i(p) = \partial_i F(p) = (\be_i, \partial_i u_1 (p))$. 

We now differentiate \eqref{eq-tangentpart} and \eqref{eq-normalpart} with respect to $p$. 
\begin{equation}\label{eq-deriv1-tangentpart}
\partial_i q_h (p) = \delta_{ih} + \mathbf{e}_h \cdot  \partial_i (V^\beta(\bar p)\mathbf{n}_\beta(\bar p))  \;\;\;\; h = 1, \ldots m
\end{equation}
\begin{equation}\label{eq-deriv1-normalpart}
(\partial_k u^\alpha_2) (q(p)) \partial_i q_k (p) = \partial_i u^\alpha_1(p) +  \mathbf{e}_\alpha \cdot  \partial_i (V^\beta(\bar p) \mathbf{n}_\beta(\bar p))  \;\;\;\; \alpha = 1, \ldots, n.
\end{equation}
Then substitute the equations \eqref{eq-deriv1-tangentpart} into the equations \eqref{eq-deriv1-normalpart} to obtain the following relation. 
\begin{multline}\label{eq-combined-system}(\partial_i u^\alpha_2)(q(p)) +  (\partial_k u_2^\alpha )(q(p))\big(\mathbf{e}_k \cdot  \partial_i (V^\beta(\bar p)\mathbf{n}_\beta(\bar p))\big)  \\ = \partial_i u^\alpha_1(p) + \mathbf{e}_\alpha \cdot  \partial_i (V^\beta(\bar p) \mathbf{n}_\beta(\bar p)). 
\end{multline}

We aim to estimate the magnitude of the derivative $\partial_i V(\bar p)$ at $\bar p = (0, u_1(0)) = x_0$.

\begin{rem}
We have heretofore suppressed the time dependence of the functions $u_j(p, t)$ for $j=1,2$, defined in Lemma \ref{graphest} and only considered the $t=1$ time-slice denoted by $u_j(p) = u_j(p,1)$. In the following calculation, we will use the time dependence of these functions in conjunction with Lemma \ref{graphest} to obtain some necessary estimates.
\end{rem}

We begin by estimating the difference
\begin{multline}\label{eq-three-bounds}
|(\partial_i u^\alpha_2)(q(0)) -  \partial_i u^\alpha_1(0)| \le |(\partial_i u^\alpha_2)(q(0)) -  \partial_i u^\alpha_2(0)| \\ + |(\partial_i u^\alpha_2)(0,1) -  \partial_i u^\alpha_2(0,0)|
 +|(\partial_i u^\alpha_2)(0,0) -  \partial_i u^\alpha_1(0,1)|
\end{multline}

By integration and Lemma \ref{graphest}, the first summand is bounded by
\begin{align*}
    |(\partial_i u^\alpha_2)(q(0)) -  \partial_i u^\alpha_2(0)| &\le |D^2 u_2| |q(0) - 0|\\
    &\le C|F_1(x_0)|^{-1}|y_0 - \zeta_0| \\
    &\le C|F_1(x_0)|^{-2}.
\end{align*}
Note that at time $t=0$, $u_1(p, 0) = u_2(p,0)$ are equal to the graph of the cone $C$ over $T_{z_0}C$, and apply Corollary \ref{sepdecay} to find that
\begin{align*}
    |(\partial_i u^\alpha_2)(0,1) -  \partial_i u^\alpha_2(0,0)| &= \bigg|\int_0^1 \partial_t \partial_i u^\alpha_2 (0,t) dt \bigg|\\
    &\le C|F_1(x_0)|^{-2}.
\end{align*}
Noting again in particular that $u_1(p, 0) = u_2(p,0)$ and applying the same argument yields 
\[ |(\partial_i u^\alpha_2)(0,0) -  \partial_i u^\alpha_1(0,1)| \le C|F_1(x_0)|^{-2}. \]

All in all, this tells us that at $\bar p = (0,u_1(0)) = x_0$
\begin{equation}\label{eq-decay-projections}\mathbf{e}_\alpha \cdot  \partial_i (V^\beta(\bar p) \mathbf{n}_\beta(\bar p)) - (\partial_k u_2^\alpha )(q(0))\big(\mathbf{e}_k \cdot  \partial_i (V^\beta(\bar p)\mathbf{n}_\beta(\bar p))\big)  = O(|F_1(x_0)|^{-2}).
\end{equation}

Intuitively, the first summand is all we care about: we wish to estimate $\nabla^\perp V(x_0)$, and for sufficiently large $R_4$, the normal frame $\{\be_\alpha\}$ is very close to the normal frame $\{\bn_\beta\}$. Thus, the normal part of the total derivative should be captured almost entirely in the first term, and the remainder in the second term should be negligible.

We show this rigorously. First, define the normal frame $\{ \mathbf{n}_\beta(\bar p) \}_{\beta = 1}^n$ by
\begin{equation}\label{def-normal-frame}
    \mathbf{n}_\beta(\bar p) = (\partial_1 u_1^\beta (p), \ldots, \partial_m u_1^\beta (p), 0, \ldots, \underbrace{-1}_{\beta\text{th place}}, \ldots, 0).
\end{equation}
Note that each $\mathbf{n}_\beta(\bar p)$ is perpendicular to the vectors $\partial_i F(p)$, which span the tangent space $T_{\bar p}M_{1, R_4}$. Furthermore, the collection is linearly independent and thus constitutes a frame of the normal space. The metric $h$ on the fibers of $NM_{1,R_4}$ can be expressed with respect to this frame by
\begin{equation}\label{eq-metric-normal-bundle}
    h_{\alpha \beta} = \mathbf{n}_\alpha \cdot \mathbf{n}_\beta = \delta_{\alpha\beta} + Du_1^\alpha \cdot Du_1^\beta.
\end{equation}
By the linear independence of the frame, $h_{\alpha \beta}$ is invertible and its inverse is denoted $h^{\alpha \beta}$.

Next, we expand the left-hand side of \eqref{eq-decay-projections} via the product rule and estimate the summands.
\begin{equation}\label{eq-expanded-normal-part}
(\mathbf{e}_\alpha \cdot \mathbf{n}_\beta(\bar p)) \partial_i V^\beta(\bar p) + (\mathbf{e}_\alpha \cdot  \partial_i \mathbf{n}_\beta(\bar p)) V^\beta(\bar p) 
\end{equation}
\begin{equation}\label{eq-expanded-tangential-part}
(\partial_k u_2^\alpha )(q(0))\big((\mathbf{e}_k \cdot \mathbf{n}_\beta(\bar p))  \partial_i (V^\beta(\bar p)) + (\mathbf{e}_k \cdot  \partial_i \mathbf{n}_\beta(\bar p))V^\beta(\bar p)\big)
\end{equation}

Recall that $F(p) := (p, u_1(p))$, that $\{\partial_i F\}_{i=1, \ldots m}$ are the tangent vectors to the embedded image $F(\mathcal{D})$, and that the induced metric is given by $g_{ij} = \partial_i F \cdot \partial_j F = \delta_{ij} + \partial_i u_1 \cdot \partial_j u_1$. Note that at the origin, the first derivatives $\partial_i u_1(0,0) = 0$, since $T_{z_0}C$ is tangent to the graph of the cone at $z_0$. By integration (c.f. Corollary \ref{sepdecay}) $|\partial_i u_1(0,1)-\partial_i u_1(0,0)| \le C|F_1(x_0)|^{-2}$. Thus, $g_{ij} = \delta_{ij} + O(|F_1(x_0)|^{-2})$ at the origin.  Similarly, the normal bundle metric $h_{\alpha \beta} = \mathbf{n}_\alpha \cdot \mathbf{n}_\beta$ can be written as $h_{\alpha \beta}(x_0) = \delta_{\alpha \beta} + O(|F_1(x_0)|^{-2})$ at the point $x_0$.

The terms involving $\partial_i \bn_\beta$ can be estimated using Lemma \ref{graphest}. We calculate $\partial_i \mathbf{n}_\beta$ explicitly.
\[
\partial_i \mathbf{n}_\beta(\bar p) = (\partial^2_{i1} u_1^\beta (p), \ldots, \partial^2_{im} u_1^\beta (p), 0, \ldots, 0).
\]
Thus, by Lemma \ref{graphest}
\[
|\partial_i \mathbf{n}_\beta(\bar p)| = O(|F_1(x_0)|^{-1}).
\]
Since for any $\gamma = 1, \ldots, n$, 
\[
V^\gamma = h^{\gamma \alpha} \mathbf{n}_\alpha \cdot (V^\beta \mathbf{n}_\beta),
\]
we can estimate
\[
|V^\gamma| \le C\|h^{-1}\||\mathbf{n_\alpha}||V^\beta \mathbf{n}_\beta| \le C\|h^{-1}\||F_1(x_0)|^{-1}.
\]
Recall that at the point $x_0$, the norm $\|h^{-1}\| = 1 + O(|F_1(x_0)|^{-2})$. Thus, the terms in \eqref{eq-expanded-normal-part} and \eqref{eq-expanded-tangential-part} involving products of $\partial_i \mathbf{n}_\beta$ and $V^\beta$ are $O(|F_1(x_0)|^{-2})$ at the point $x_0$.

Thus, we can further simplify \ref{eq-decay-projections} to the following form.
\begin{equation} (\be_\alpha \cdot \bn_\beta - (\partial_k u_2^\alpha)(q(0)) (\be_k \cdot \bn_\beta) )\partial_i V^\beta(\bar p) = O(|F_1(x_0)|^{-2}).
\end{equation}
It is immediate that $\be_\alpha \cdot \bn_\beta = - \delta_{\alpha\beta}$. By the convergence of the first derivatives $Du_1(p, 1)$ to $Du_1(p, 0)$ at the rate $O(|F_1(x)|^{-2})$, for any $\delta > 0$, we can choose $R_4>0$ sufficiently large that $|\be_k \cdot \bn_\beta| < \delta$. Recall that by Lemma \ref{graphest}, $|(\partial_k u_2^\alpha)(q(0))| < C_2$ and choose $\delta \ll C_2$. Thus, if $R_4>0$ is sufficiently large, the matrix $(\be_\alpha \cdot \bn_\beta - (\partial_k u_2^\alpha)(q(0)) (\be_k \cdot \bn_\beta) )_{\alpha \beta}$ is invertible with close to unit operator norm. Thus, we obtain that
\begin{equation*} 
\partial_i V^\beta(\bar p) = O(|F_1(x_0)|^{-2}).
\end{equation*}
Which implies that when $\bar p = (0, u_1(0))$,
\begin{equation}
|\nabla^\perp V(x_0)| \le |\partial_i (V^\beta(\bar p) \bn_\beta(\bar p))| \le C|F_1(x_0)|^{-2}.
\end{equation}
This concludes the proof of the $i=1$ case.

To prove the Lemma for $i = 2$, we begin by differentiating the system given in \eqref{eq-tangentpart} and \eqref{eq-normalpart} twice. This yields the following system:

\begin{equation}\label{eq-deriv2-tangentpart}
\partial_{ij}^2 q_h (p) = \mathbf{e}_h \cdot  \partial_{ij}^2(\mathbf{n}_\beta(\bar p) V^\beta(\bar p)) \;\;\;\; h = 1, \ldots m
\end{equation}

\begin{multline}\label{eq-deriv2-normalpart}
    (\partial^2_{lk} u^\alpha_2) (q(p))\Big(\partial_j q_l (p) \partial_i q_k (p)\Big) + (\partial_k u^\alpha_2) (q(p)) \partial^2_{ij} q_k (p) \\ = \partial^2_{ij} u^\alpha_1(p) +  \mathbf{e}_\alpha \cdot  \partial^2_{ij} (V^\beta(\bar p) \mathbf{n}_\beta(\bar p))  \;\;\;\; \alpha = 1, \ldots, n.
\end{multline}

Substituting \eqref{eq-deriv2-tangentpart} into \eqref{eq-deriv2-normalpart} and rearranging terms, we obtain

\begin{multline}\label{eq-reduced-system}
    \mathbf{e}_\alpha \cdot  \partial^2_{ij} (V^\beta(\bar p) \mathbf{n}_\beta(\bar p)) -  (\partial_k u^\alpha_2) (q(p)) (\mathbf{e}_h \cdot  \partial_{ij}^2(\mathbf{n}_\beta(\bar p) V^\beta(\bar p))) \\ = (\partial^2_{lk} u^\alpha_2) (q(p))\Big(\partial_j q_l (p) \partial_i q_k (p)\Big) - \partial^2_{ij} u^\alpha_1(p)    \;\;\;\; \alpha = 1, \ldots, n.
\end{multline}

Next, use \eqref{eq-deriv1-tangentpart} to expand $\partial_j q_l (p) \partial_i q_k (p)$.

\begin{align*} \partial_j q_l \partial_i q_k &= (\delta_{jl} + \mathbf{e}_l \cdot  \partial_j (V^\beta \mathbf{n}_\beta))(\delta_{ik} + \mathbf{e}_k \cdot  \partial_i (V^\beta\mathbf{n}_\beta))\\
&= \delta_{jl}\delta_{ik} + \mathbf{e}_l \cdot  \partial_j (V^\beta \mathbf{n}_\beta) \delta_{ik} + \mathbf{e}_k \cdot  \partial_i (V^\beta\mathbf{n}_\beta)\delta_{jl} \\
&\qquad + \big(\mathbf{e}_l \cdot  \partial_j (V^\beta \mathbf{n}_\beta)\big) \big(\mathbf{e}_k \cdot  \partial_i (V^\beta\mathbf{n}_\beta)) \big)\\
&= \delta_{jl}\delta_{ik} + O(|F_1(x_0)^{-2}|),
\end{align*}
where the last line comes from the proof of the $i=1$ case. Since $|D^2 u_2| \le C_2 |F_1(x)|^{-1}$ by Lemma \ref{graphest}, 
\begin{equation}
(\partial^2_{lk} u^\alpha_2) (q(0))\Big(\partial_j q_l (0) \partial_i q_k (0)\Big) = (\partial^2_{ij} u^\alpha_2) (q(0)) + O(|F_1(x_0)^{-3}|)
\end{equation}
Evaluating at $x_0$ the right hand side of \eqref{eq-reduced-system} becomes
\[
\partial^2_{ij}u^\alpha_2(q(0)) - \partial^2_{ij}u^\alpha_1(0) + O(|F_1(x_0)^{-3}|).
\]
The difference in the above expression can be estimated using precisely the same application of Lemma \ref{graphest} and Corollary \ref{sepdecay} used to estimate \eqref{eq-three-bounds} in the proof of the $i=1$ case. Thus, the equation \eqref{eq-reduced-system} becomes 
\begin{equation}\label{eq-reduced-equation-2nd-derivs}
\mathbf{e}_\alpha \cdot  \partial^2_{ij} (V^\beta(x_0) \mathbf{n}_\beta(x_0)) -  (\partial_k u^\alpha_2) (q(0)) (\mathbf{e}_h \cdot  \partial_{ij}^2(\mathbf{n}_\beta(x_0) V^\beta(x_0))) = O(|F_1(x_0)^{-3}|)
\end{equation}
Now, expand $\partial^2_{ij} (V^\beta(x_0) \mathbf{n}_\beta(x_0))$.
\begin{align*} \partial^2_{ij} (V^\beta (x_0) \mathbf{n}_\beta(x_0)) &= \partial^2_{ij} V^\beta(x_0) \mathbf{n}_\beta(x_0) + \partial_i V^\beta(x_0) \partial_j \mathbf{n}_\beta(x_0) \\ &\qquad + \partial_j V^\beta(x_0) \partial_i \mathbf{n}_\beta(x_0) + V^\beta(x_0) \partial^2_{ij} \mathbf{n}_\beta(x_0)\\ 
&= \partial^2_{ij} V^\beta(x_0) \mathbf{n}_\beta(x_0) + V^\beta(x_0) \partial^2_{ij} \mathbf{n}_\beta(x_0) +  O(|F_1(x_0)^{-3}|),
\end{align*}
where we estimate $|\partial_j V^\beta(x_0)| =  O(|F_1(x_0)^{-2}|)$ by the proof of the $i=1$ case, and use the bounds on $|\partial_j \mathbf{n}_\beta(x_0)|$ derived in the proof of the $i=1$ case. 

Next, we bound $V^\beta(x_0) \partial^2_{ij} \mathbf{n}_\beta(x_0)$. Calculate $\partial^2_{ij} \mathbf{n}_\beta(x_0)$ explicitly.
\[
\partial^2_{ij} \mathbf{n}_\beta(\bar p) = (\partial^3_{ij1} u_1^\beta (p), \ldots, \partial^3_{ijm} u_1^\beta (p), 0, \ldots, 0).
\]
By Lemma \ref{graphest}, $|\partial^2_{ij} \mathbf{n}_\beta(\bar p)| \le C|D^3u_1| \le C|F_1(x_0)^{-2}|$. Since $|V^\beta| = O(|F_1(x_0)^{-1}|)$, 
\[
|V^\beta(x_0) \partial^2_{ij} \mathbf{n}_\beta(x_0)| = O(|F_1(x_0)^{-3}|).
\]
Thus, equation \eqref{eq-reduced-equation-2nd-derivs} can be further simplified to
\begin{equation}\label{eq-final-expression-2nd-derivs}
\big((\mathbf{e}_\alpha \cdot  \mathbf{n}_\beta(x_0))  -  (\partial_k u^\alpha_2) (q(0)) (\mathbf{e}_h \cdot \mathbf{n}_\beta(x_0) )\big) \partial_{ij}^2 V^\beta(x_0) = O(|F_1(x_0)^{-3}|).
\end{equation}
As in the $i=1$ case, the left-hand side can be inverted at $x_0$ to obtain
\[
|\partial_{ij}^2 V^\beta(x_0)| = O(|F_1(x_0)^{-3}|).
\]
Finally, we prove that the bound on $|(\nabla^\perp)^2 V|$ follows from the previous estimates. Since $|(\nabla^\perp)^2 V| \le |D(\nabla^\perp V)|$, it suffices to estimate $|D(\nabla^\perp V)|$. 
\begin{align}\label{eq-asymp-second-derivs}
   D_j(\nabla^\perp_i V) &= \partial_j\big( \partial_i V^\beta \bn_\beta + h^{\alpha \gamma} V^\beta(\partial_i \bn_\beta \cdot \bn_\alpha) \bn_\gamma \big)  \\
   &= \partial^2_{ij}V^\beta \bn_\beta + \partial_i V^\beta \partial_j \bn_\beta + \partial_j h^{\alpha \gamma} V^\beta(\partial_i \bn_\beta \cdot \bn_\alpha) \bn_\gamma \nonumber \\
   &\qquad + h^{\alpha \gamma} \partial_j V^\beta(\partial_i \bn_\beta \cdot \bn_\alpha) \bn_\gamma  + h^{\alpha \gamma} V^\beta(\partial^2_{ij} \bn_\beta \cdot \bn_\alpha) \bn_\gamma \nonumber \\
   &\qquad + h^{\alpha \gamma} V^\beta(\partial_i \bn_\beta \cdot \partial_j \bn_\alpha) \bn_\gamma + h^{\alpha \gamma} V^\beta(\partial_i \bn_\beta \cdot \bn_\alpha) \partial_j \bn_\gamma \nonumber\\
    &= \partial^2_{ij}V^\beta \bn_\beta + \partial_i V^\beta \partial_j \bn_\beta +   O(|V||F_1(x_0)^{-6}|)   + O(|DV||F_1(x_0)^{-3}|)  \nonumber \\
    &\qquad +    O(|V||F_1(x_0)^{-4}|) +  O(|V||F_1(x_0)^{-2}|) + O(|V||F_1(x_0)^{-4}|) \nonumber \\
   &= O(|F_1(x_0)^{-3}|). \nonumber
\end{align}
This completes the proof of the lemma.
\end{proof}

\begin{lem}\label{linearization}
Let $\sigma$ be a section of the normal bundle $N M_{1,R_4}$. Define a linear operator acting on such sections $\sigma$ at a point $x \in M_{1,R_4}$
\[
\mathcal{L}_0 \sigma = \Delta^\perp \sigma - \frac{1}{2}\nabla^\perp_{F_1(x)^T}\sigma,
\]
where $\Delta^\perp$ and $\nabla^\perp$ are the Laplacian and covariant derivative on the normal bundle respectively. Let $V$ be the section of $N M_{1,R_4}$ given in Lemma \ref{normbund}. There exists $C_4 > 0$ such that at any $x_0 \in M_{1,R_4}$, the following equation is satisfied.
\begin{equation}\label{diffop}
(\mathcal{L}_0 + \frac{1}{2}) V + Q(x_0,V,\nabla^{\perp}V) = \Delta^\perp V - \frac{1}{2}\nabla^\perp_{F_1(x_0)^T}V + \frac{V}{2} + Q(x_0,V,\nabla^{\perp}V) = 0 
\end{equation}
where $F_1(\cdot)^T$ is a vector field of $T M_{1,R_4}$ and the function $Q$ satisfies the inequality
\begin{equation}\label{quadterm}
    |Q(x,V,\nabla^{\perp}V))| \le C_4|F_1(x)|^{-2}(|V|+ |F_1(x)|^{-1}|\nabla^{\perp} V|).
\end{equation}
\end{lem}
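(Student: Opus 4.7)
The plan is to derive \eqref{diffop} by expanding the self-shrinker equation for $F_2$, viewed as a normal perturbation $F_2 = F_1 + V$ of $F_1$ via Proposition \ref{normbund}, about $V \equiv 0$. Both $F_1$ and $F_2$ satisfy $H + \tfrac{1}{2}F^\perp = 0$ in their respective normal bundles. I will project the $F_2$-equation onto $NM_1$ at the basepoint, subtract the $F_1$-equation, and identify the leading linear operator as $\mathcal{L}_0 + \tfrac{1}{2}$, absorbing all remaining terms into $Q$.

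Working in local orthonormal frames $\{e_i\}$ of $TM_1$ and $\{\mathbf{n}_\alpha\}$ of $NM_1$, the perturbed tangent vectors to $F_2$ at $F_2(x)$ are $\tilde{e}_i = e_i + \nabla^\perp_{e_i} V - A^V(e_i)$, where $A^V(e_i) := \sum_j \langle A_{F_1}(e_i,e_j), V\rangle e_j$. Solving the orthogonality and unit normalization constraints determines the perturbed normal frame as $\tilde{\mathbf{n}}_\alpha = \mathbf{n}_\alpha - \sum_i \langle \mathbf{n}_\alpha, \nabla^\perp_{e_i}V\rangle e_i + O_2$, where $O_2$ denotes terms quadratic in $(V, \nabla^\perp V)$ with coefficients polynomial in $A_{F_1}$. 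From these expansions I compute the two key Taylor series for the components in $NM_1$ at $x$:
\begin{equation*}
(H_{F_2})^{\perp_{F_1}} = H_{F_1} + \Delta^\perp V + R(V) + Q_H,
\end{equation*}
\begin{equation*}
(F_2^{\perp_{F_2}})^{\perp_{F_1}} = F_1^\perp + V - \nabla^\perp_{F_1^T}V + Q_P,
\end{equation*}
where $R(V)$ is the Jacobi-type correction, linear in $V$ with coefficient bilinear in $A_{F_1}$, and $Q_H, Q_P$ are strictly nonlinear remainders. The crucial drift contribution $-\nabla^\perp_{F_1^T}V$ emerges from $-\sum_i \langle F_1, e_i\rangle \langle \mathbf{n}_\alpha, \nabla^\perp_{e_i}V\rangle$ after the decomposition $F_1 = F_1^T + F_1^\perp$.

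Substituting these expansions into $H_{F_2} + \tfrac{1}{2}F_2^{\perp_{F_2}} = 0$ (projected onto $NM_1$) and subtracting the $F_1$-shrinker equation $H_{F_1} + \tfrac{1}{2}F_1^\perp = 0$ yields
\begin{equation*}
\Delta^\perp V - \tfrac{1}{2}\nabla^\perp_{F_1^T}V + \tfrac{1}{2}V + R(V) + Q_H + \tfrac{1}{2}Q_P = 0,
\end{equation*}
which is exactly \eqref{diffop} with $Q := R(V) + Q_H + \tfrac{1}{2}Q_P$. To verify \eqref{quadterm}, I use Lemma \ref{curvasymp}, which gives $|\nabla^i A_{F_1}| \le C|F_1|^{-1-i}$, together with Lemma \ref{secasymp}, which gives $|(\nabla^\perp)^i V| \le C|F_1|^{-1-i}$. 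The linear piece satisfies $|R(V)| \le C|A_{F_1}|^2|V| \le C|F_1|^{-2}|V|$ directly. For each nonlinear monomial I then allocate the a priori estimates to the factor that yields the desired right-hand side of \eqref{quadterm}. For example, a term of the form $F_1^T \cdot A_{F_1} \cdot V \cdot \nabla^\perp V$ (arising from the second-order correction to $\tilde{\mathbf{n}}_\alpha$) is bounded using $|F_1^T||A_{F_1}| \le C|F_1| \cdot C|F_1|^{-1} = C$ and then $|\nabla^\perp V| \le C|F_1|^{-2}$ to give $\le C|F_1|^{-2}|V|$; a term of the form $V\cdot(\nabla^\perp)^2 V$ is bounded by $C|V||F_1|^{-3} \le C|F_1|^{-2}|V|$; and a term of the form $F_1^\perp \cdot (\nabla^\perp V)^2$ is controlled using $|F_1^\perp| = 2|H_{F_1}| \le C|F_1|^{-1}$ to obtain $\le C|F_1|^{-3}|\nabla^\perp V|$.

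The principal technical obstacle is the bookkeeping in the Taylor expansions above, particularly in higher codimension where the normal connection is non-trivial and the computation of $\tilde{\mathbf{n}}_\alpha$ requires solving simultaneous orthonormality and orthogonality constraints to second order. The chief subtlety is that several terms naively contribute only $O(|F_1|^{-1}|\nabla^\perp V|)$, which is too weak, but in each such case a compensating factor of $A_{F_1}$, $F_1^\perp$, or $V$ is present and restores the needed decay once the estimates from Lemmas \ref{curvasymp} and \ref{secasymp} are applied to the correct factor.
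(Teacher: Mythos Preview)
Your approach is essentially the same as the paper's: both expand the self-shrinker equation for $\tilde F=F_1+V$ about $V\equiv 0$, identify the linear part as $\mathcal{L}_0+\tfrac{1}{2}$, and bound the remainder using Lemmas \ref{curvasymp} and \ref{secasymp}. The only cosmetic difference is that the paper never constructs the perturbed normal frame $\tilde{\mathbf n}_\alpha$; instead it works in normal coordinates at $x_0$ with $\nabla^\perp\mathbf n_\beta(x_0)=0$, computes $\partial_i\tilde F$, $\partial^2_{ij}\tilde F$, $\tilde g_{ij}$ and $\tilde g^{ij}$ explicitly, obtains $(\partial^2_{ij}\tilde F)^N$ and $\tilde F^N$ by subtracting the tangential projection $\tilde g^{kl}(\,\cdot\,\!\cdot\partial_k\tilde F)\partial_l\tilde F$, and then reads off the $\mathbf n_\beta$-components of the resulting identity in $\R^{n+m}$---which is exactly your projection onto $NM_1$.
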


\begin{proof} Fix a point $x_0 \in M_{1,R_4}$. We recall from Lemma \ref{secasymp} the parametrization $F: \mathcal{D} \subset \R^m \rightarrow F_1(U_{x_0})$ given by $p \mapsto (p,u_1(p))$, where $U_{x_0}$ is a neighborhood of $x_0$ in $M_{1, R_4}$ and $F_1(U_{x_0})$ is its image in $\R^{n+m}$ (up to affine transformations). If we use the exponential map to obtain a normal coordinate system at $x_0$, we can pull back these coordinates to $\mathcal{D}$ and relabel $(p_1, \ldots, p_n)$ so that $F(0)= x_0$, $\langle \partial_i F(0), \partial_j F(0) \rangle = \delta_{ij}$,  and $\partial^2_{ij} F (0) = A^{\beta}_{ij}(0)\mathbf{n}^\beta$.

We also recall the frame $\{\bn_\beta\}_{\beta = 1}^{n}$ of the normal bundle $NM_{1,R_4}$ given by \eqref{def-normal-frame}. In addition to this frame, we will at times use for convenience an orthonormal geodesic frame of the normal bundle $\{N_\beta\}_{\beta = 1}^n$ in a neighborhood of $x_0$ such that $\nabla^\perp N_\beta(x_0) = 0$ for all $\beta$. 

Let $V= V^\beta\bn_\beta$ be the section of $NM_{1,R_4}$ given in Lemma \ref{normbund}. Since the shrinker $F_2$ can be written as a graph over $F_1$ outside of $B_{R_4} \subset \R^{n+m}$, we have a parametrization of $F_2$ near the point $y_0 = x_0 + V^\beta(x_0) \mathbf{n}_\beta(x_0)$ given by
\[ \tilde{F}: \Omega \rightarrow U \subset F_2^{-1}(\R^{m+n} \setminus K),\;\;\; y_0 \in U
\]
\[\tilde{F}(p) = F(p) + V^\beta(p) \mathbf{n}_\beta(p),\;\;\;\; p \in \Omega
\]
We differentiate in the coordinates $(x_k)_{k=1}^m$ on $\Omega \subset \R^m$ to find the tangent vectors to the shrinker $F_2$. 
\begin{equation}\label{F2tangent}
\partial_i \tilde{F} = \partial_i F + \partial_i V^\beta \mathbf{n}_\beta + V^\beta \partial_i \mathbf{n}_\beta 
\end{equation}
where $\beta=1,\ldots, n$. The second fundamental form of $\tilde{F}$ at $0$ is the normal component of $\partial^2_{ij}\tilde{F}$. To this end, we determine the values of $\partial^2_{ij} \tilde{F}$ and $ (\partial^2_{ij} \tilde{F})^T$.
\begin{align*} \partial_{ij}^2 \tilde{F} &= \partial_{ij}^2 F + \partial_{ij}^2V^\beta \mathbf{n}_\beta + \partial_j V^\beta \partial_i \mathbf{n}_\beta + \partial_i V^\beta \partial_j \bn_\beta  - V^\beta \partial^2_{ij}\bn_\beta \\
&= A_{ij}^\beta \mathbf{n}_\beta + \partial^2_{ij}V^\beta \mathbf{n}_\beta + \partial_j V^\beta \partial_i \mathbf{n}_\beta + \partial_i V^\beta \partial_j \bn_\beta + Q(x_0, V, DV),
\end{align*}
such that $|Q(x_0, V, DV)| \le C_4 |F_1(x_0)|^{-2}(|V| + |F_1(x_0)|^{-1}|DV|)$. In the last line, we used the asymptotics from Lemmas \ref{secasymp} and \ref{curvasymp} to conclude that $|\partial^2_{ij}\bn_\beta| \le C|D^3 u_1|$ and thus $|V^\beta \partial^2_{ij}\bn_\beta| \le C|V||F_1(x_0)|^{-2}$. 

We now calculate the metric tensor with respect to the coordinate chart $\tilde{F}$. Here, we use the geodesic normal frame $\{N_\alpha\}$ to achieve a simpler and more geometrically meaningful expression. Note that $\partial_j N_\alpha = A^\alpha_{jl} \partial_l F$ at $x_0$ by the vanishing of the Christoffel symbols. 
\begin{align*}\tilde{g}_{ij} &= \partial_i \tilde{F} \cdot \partial_j \tilde{F} \\
&=(\partial_i F + \partial_i V^\beta N_\beta - V^\beta A^\beta_{ik} \partial_k F)\cdot(\partial_j F + \partial_j V^\alpha N_\alpha - V^\alpha A^\alpha_{jl} \partial_l F) \\
&= \delta_{ij} + \partial_iV^\beta \partial_j V^\beta - V^\beta A^\beta_{jl}\delta_{il} - V^\beta A^\beta_{ik}\delta_{jk} + V^\beta V^\alpha A^\alpha_{ik}A^\beta_{jl}\delta_{kl} \\
&= \delta_{ij} + \langle \partial_i V, \partial_j V \rangle - 2 \langle V, A_{ij} \rangle + Q_{ij}(x_0, V, DV),
\end{align*}
where $|Q_{ij}(x_0, V, DV)| \le C_4 |F_1(x_0)|^{-3}(|V| + |DV|)$. We now calculate the inverse $(\tilde{g}^{ij})$ up to quadratic error. This reduces to inverting $I + (\langle \partial_i V, \partial_j V \rangle - 2\langle V, A_{ij}\rangle)_{ij}$, where $S = (\langle \partial_i V, \partial_j V \rangle - 2\langle V, A_{ij}\rangle)_{ij}$ is a symmetric matrix by the symmetry of the second fundamental form. Let 
\[ 
p(x) = \sum_{k=0}^m a_k x^k = x^m + \tr (S) x^{m-1} + \cdots + \det (S)
\]
be the characteristic polynomial of $S$. The inverse $(I + S)^{-1}$ has the following form:
\[
(I + S)^{-1} =\bigg( \sum_{j=0}^m (-1)^j a_j\bigg)^{-1}\bigg(-\sum_{k=0}^{m-1} \bigg[ \sum_{j=0}^{m-1-k} (-1)^j a_{j+k+1} \bigg] S^k \bigg)
\]
We use the fact that $S^k = (Q_{ij}(x_0, V, DV))_{ij}$ for $k \ge 2$ and $a_j = Q(x_0, V, DV)$ for $j \le m-2$, to obtain
\begin{align*}
(I + S)^{-1} &= \frac{ \big( ((-1)^m + (-1)^{m-1} \tr(S))I + ((-1)^{m-1} + (-1)^{m-2} \tr(S))S \big)}{\big( (-1)^m + (-1)^{m-1} \tr(S) \big)}  \\
& \hspace{6cm} + (Q_{ij}(x_0, V, DV))_{ij} \\
&= I - S + (Q_{ij}(x_0, V, DV))_{ij}
\end{align*}
Thus, we obtain $\tilde{g}^{ij} = \delta^{ij} - \langle \partial_i V, \partial_j V \rangle + 2\langle V, A_{ij} \rangle + Q_{ij}(x_0, V, DV)$. Now we can calculate the tangential components of $\partial_{ij}^2 \tilde{F}$.
\begin{align*}
(\partial_{ij}^2 \tilde{F})^T &= \tilde{g}^{lm}(\partial^2_{ij} \tilde{F} \cdot \partial_l \tilde{F}) \partial_m \tilde{F} \\ 
&= \tilde{g}^{lm}(((A_{ij}^\beta  + \partial^2_{ij}V^\beta )\mathbf{n}_\beta +\partial_j V^\beta \partial_i \mathbf{n}_\beta + \partial_i V^\beta \partial_j \bn_\beta )\cdot \partial_l \tilde{F}) \partial_m \tilde{F} \\
& \hspace{6cm} + Q(x_0, V, DV) \\
&= \tilde{g}^{lm}\big( (A_{ij}^\beta  + \partial^2_{ij}V^\beta ) \bn_\beta \cdot \partial_l (V^\alpha \bn_\alpha) + (\partial_j V^\beta \partial_i \mathbf{n}_\beta + \partial_i V^\beta \partial_j \bn_\beta) \cdot \partial_l F \big) \partial_m \tilde{F}\\
&\qquad + Q(x_0, V, DV) \\
&=\big( A_{ij}^\beta\partial_l V^\beta + (\partial_j V^\beta \partial_i \mathbf{n}_\beta + \partial_i V^\beta \partial_j \bn_\beta)  \cdot \partial_l F \big) \partial_l \tilde{F} + Q(x_0, V, DV) \\
&=A_{ij}^\beta\partial_l V^\beta \partial_l F + \partial_j V^\beta \partial_i \mathbf{n}_\beta + \partial_i V^\beta \partial_j \bn_\beta + Q(x_0, V, DV)
\end{align*}
Throughout the above computation, we are moving terms into the quadratic error term using the decay estimates from Lemma \ref{secasymp}. Note that the last line comes from observing the following bound on the normal part of $\partial_j V^\beta \partial_i \mathbf{n}_\beta + \partial_i V^\beta \partial_j \bn_\beta$ at $x_0$:
\begin{align*}
|(\partial_j V^\beta \partial_i \mathbf{n}_\beta + \partial_i V^\beta \partial_j \bn_\beta)^\perp| &\le C|\partial_i V^\beta||\partial_i \mathbf{n}_{\beta} \cdot \bn_\alpha| \\
& \le C|\partial_i V^\beta||D^2u_1(x_0)||Du_1(x_0)| \\
& \le C|DV||F_1(x_0)|^{-3}.
\end{align*}
Putting everything together, we find the normal component of $\partial_{ij}^2 \tilde{F}$:
\begin{align}\label{dderivnorm}
(\partial_{ij}^2 \tilde{F} )^\perp &= \partial_{ij}^2 \tilde{F} - (\partial_{ij}^2 \tilde{F})^T \nonumber \\
&=A_{ij}^\beta \mathbf{n}_\beta + \partial^2_{ij}V^\beta \mathbf{n}_\beta + \partial_j V^\beta \partial_i \mathbf{n}_\beta + \partial_i V^\beta \partial_j \bn_\beta  \nonumber \\
& \qquad - \big( A_{ij}^\beta\partial_l V^\beta \partial_l F + \partial_j V^\beta \partial_i \mathbf{n}_\beta + \partial_i V^\beta \partial_j \bn_\beta\big)  + Q(x_0, V, DV) \nonumber \\
&= A_{ij}^\beta \mathbf{n}_\beta + \partial^2_{ij}V^\beta \mathbf{n}_\beta - A_{ij}^\beta\partial_l V^\beta \partial_l F + Q(x_0, V, DV)
\end{align}

Next we calculate the normal component of the position vector $\tilde{F}$. We use the normal frame $\{N_\alpha\}$ for these computations. First find the tangential component
\begin{align*}
\tilde{F}^T = \tilde{g}^{kl}(\tilde{F} \cdot \partial_k \tilde{F}) \partial_l \tilde{F} &= \tilde{g}^{kl}\big( (F + V^\beta N_\beta) \cdot (\partial_k F + \partial_k V^\alpha N_\alpha - V^\alpha A^\alpha_{km}\partial_m F)\big) \partial_l \tilde{F} \\
&= \tilde{g}^{kl}\big(F^k + \partial_k V^\alpha F^\alpha + V^\alpha \partial_k V^\alpha - V^\alpha A^\alpha_{km}F^m \big) \partial_l \tilde{F} \\
&= \big(F^l + \partial_l V^\alpha F^\alpha  - V^\alpha A^\alpha_{lm}F^m \big) \partial_l \tilde{F} \\
&\qquad - \langle \partial_l V, \partial_k V \rangle(F^k + \partial_k V^\alpha F^\alpha - V^\alpha A^\alpha_{km}F^m \big) \partial_l \tilde{F}\\
&\qquad + 2V^\beta A^\beta_{kl}(F^k + \partial_k V^\alpha F^\alpha - V^\alpha A^\alpha_{km}F^m \big) \partial_l \tilde{F} \\
& \hspace{2cm} +Q(x_0, V, DV) \\
&= F^l \partial_l F + F^l \partial_l V^\beta N_\beta -  V^\beta A^\beta_{lm}F^l\partial_m F + \partial_l V^\beta F^\beta \partial_l F \\
&\qquad - V^\beta A^\beta_{lm} F^m \partial_l F - \langle \partial_l V, \partial_k V \rangle F^k \partial_l F + 2V^\beta A^\beta_{kl}F^k \partial_l F \\
&\qquad + Q(x_0, V, DV)
\\
&= (F^l + \partial_l V^\beta F^\beta - \langle \partial_l V, \partial_k V \rangle F^k )\partial_l F + F^l \partial_l V^\beta N_\beta \\
&\qquad + Q(x_0, V, DV) 
\end{align*}
Note that in the fourth equality, the fact that $|F^\perp| = 2|H|$ is used to absorb $\partial_\ell V^\alpha F^\alpha \partial_\ell V^\beta N_\beta$ into $Q(x_0, V, DV)$. Thus,
\begin{align}\label{posnorm}
\tilde{F}^\perp &= \tilde{F} - \tilde{F}^T \nonumber \\
&= F^\perp + V^\beta N_\beta - (\partial_l V^\beta F^\beta - \langle \partial_l V, \partial_k V \rangle F^k)\partial_l F - F^l \partial_l V^\beta N_\beta +Q(x_0, V, DV) 
\end{align}
Since $\tilde{F}$ is a self-shrinker, we can substitute \eqref{dderivnorm} and \eqref{posnorm} into the self-shrinker equation.
\begin{align}\label{selfshrinkeqn}
H_{\tilde{F}} &= \tilde{g}^{ij} (\partial^2_{ij} \tilde{F})^\perp = -\frac{1}{2} \tilde{F}^\perp
\end{align}
\begin{multline*}\tilde{g}^{ij}(A_{ij}^\beta \mathbf{n}_\beta + \partial^2_{ij}V^\beta \mathbf{n}_\beta - A_{ij}^\beta\partial_l V^\beta \partial_l F) \\ = -\frac{1}{2} \big( F^\perp + V^\beta N_\beta - (\partial_l V^\beta F^\beta - \langle \partial_l V, \partial_k V \rangle F^k)\partial_l F - F^l \partial_l V^\beta N_\beta\big) \\ + Q(x_0, V, DV)  
\end{multline*}
Note that $\tilde g^{ij}$ is $\delta^{ij} + O(|V||F_1(x_0)|^{-1})$, so we can trace with respect to $\delta_{ij}$ and place the remainder terms in $Q(x_0, V, DV)$. 
\begin{multline*}H_F^\beta \mathbf{n}_\beta + \partial^2_{ii}V^\beta \mathbf{n}_\beta - H_F^\beta \partial_l V^\beta \partial_l F \\ = -\frac{1}{2}F^N - \frac{1}{2} V^\beta N_\beta  + \frac{1}{2} (\partial_l V^\beta F^\beta - \langle \partial_l V, \partial_k V \rangle F^k)\partial_l F  + \frac{1}{2} F^l \partial_l V^\beta N_\beta + Q(x_0, V, DV)  
\end{multline*}
By the calculation \eqref{eq-asymp-second-derivs} in Lemma \ref{secasymp}, make the substitution
\[
\partial^2_{ij} V^\beta \bn_\beta = D_j(\nabla_i^\perp V) - \partial_i V^\beta \partial_j \bn_\beta + Q(x_0, V, DV).
\]
Tracing and taking the normal part of our equation, we obtain 
\begin{multline*}
\big(H_F^\beta \mathbf{n}_\beta + D_i(\nabla_i^\perp V) - \partial_i V^\beta \partial_j \bn_\beta - H_F^\beta \partial_l V^\beta \partial_l F \big)^{\perp}  \\
\qquad = -\frac{1}{2}\Big( F^\perp + V^\beta N_\beta - (\partial_l V^\beta F^\beta - \langle \partial_l V, \partial_k V \rangle F^k)\partial_l F - F^l \partial_l V^\beta N_\beta + Q(x_0, V, DV) \Big)^\perp
\end{multline*}
As $F$ satisfies the self-shrinker equation, $H_F^\beta \mathbf{n}_\beta$ and $-\frac{1}{2}F^\perp$ cancel. Note that the normal part of $\partial_i V^\beta \partial_j \bn_\beta$ at $x_0$ is $O(|DV||F_1(x_0)|^{-3})$ and this term can thus be moved into $Q(x_0, V, DV)$. Note also that $Q(x_0, V, DV) = Q(x_0, V, \nabla^\perp V)$, as $|DV| \lesssim |\nabla^\perp V| + |V|$, with universal implicit constant. Removing all tangential parts, we obtain 

\[(\nabla^\perp)^2_{ii} V = -\frac{1}{2}V^\beta N_\beta + \frac{1}{2} F^l \partial_l V^\beta N_\beta + Q(x_0, V, \nabla^{\perp}V)  \]

We can rewrite this expression as 
\[
\Delta^\perp V - \frac{1}{2}\nabla^\perp_{F^T} V + \frac{V}{2} + Q(x_0, V, \nabla^\perp V) = 0, 
\]
where we are evaluating at $x_0$. This concludes the proof of the lemma. 
\end{proof} 

We now prove that the analogous equation holds for self-expanders.

\begin{cor}\label{xpanderlinearization}
Suppose that $F_1$ and $F_2$ are self-expanders asymptotic to the same asymptotic cone $C$. All the previous conclusions of Sections 2 and 3 for self-shrinkers up to but not including Lemma \ref{linearization} apply to expanders $F_1$ and $F_2$. Let $\sigma$ be a section of the normal bundle $N M_{1,R_4}$. Define a linear operator acting on such sections $\sigma$ at a point $x \in M_{1,R_4}$
\[
\mathcal{L}_0^+ \sigma = \Delta^\perp \sigma + \frac{1}{2}\nabla^\perp_{F_1(x)^T}\sigma,
\]
where $\Delta^\perp$ and $\nabla^\perp$ are the Laplacian and covariant derivative on the normal bundle respectively. Let $V$ be the section of $N M_{1,R_4}$ given in Lemma \ref{normbund}. There exists $C_4 > 0$ such that at any $x_0 \in M_{1,R_4}$, the following equation is satisfied.
\begin{equation}\label{diffopxp}
(\mathcal{L}_0^+ - \frac{1}{2}) V + Q(x_0,V,\nabla^{\perp}V) = \Delta^\perp V + \frac{1}{2}\nabla^\perp_{F_1(x_0)^T}V - \frac{V}{2} + Q(x_0,V,\nabla^{\perp}V) = 0 
\end{equation}
where $F_1(\cdot)^T$ is a vector field of $T M_{1, R_4}$ and the function $Q$ satisfies the inequality
\begin{equation}\label{quadtermxp}
    |Q(x,V,\nabla^{\perp}V))| \le C_4|F_1(x)|^{-2}(|V|+ |F_1(x)|^{-1}|\nabla^{\perp} V|).
\end{equation}
\end{cor}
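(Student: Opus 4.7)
The plan is to re-run the proof of Lemma \ref{linearization} essentially verbatim, flipping only the single sign coming from the difference between the shrinker equation $H=-F^\perp/2$ and the expander equation $H=+F^\perp/2$. As the statement already asserts, every preceding result of Sections 2 and 3 applies to expanders without change, because their derivations rely only on the asymptotic geometry of $F$ near the cone, the curvature decay of Lemma \ref{curvasymp}, and the graph and section estimates of Lemma \ref{graphest} and Lemma \ref{secasymp} --- none of which are sensitive to the sign in the soliton equation. In particular, $F_2$ may be represented as a normal graph $\tilde{F}(p)=F_1(p)+V^\beta(p)\mathbf{n}_\beta(p)$ over a sufficiently distant end $M_{1,R_4}$ of $F_1$, with $V$ obeying the decay estimates \eqref{Vdecay}.

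First, I fix $x_0\in M_{1,R_4}$, pick a local orthonormal frame $\{\mathbf{n}_\beta\}_{\beta=1}^n$ of $NM_{1,R_4}$ satisfying $\nabla^\perp\mathbf{n}_\beta(x_0)=0$, and choose normal coordinates on $M_{1,R_4}$ around $x_0$. I then carry out, line by line, the computations from Lemma \ref{linearization}: the tangent vectors $\partial_i\tilde{F}$, the metric $\tilde{g}_{ij}$ and its inverse $\tilde{g}^{ij}$, the tangential and normal parts of $\partial^2_{ij}\tilde{F}$, and the tangential and normal parts of the position vector $\tilde{F}$, each modulo a quadratic remainder $Q(x_0,V,\nabla^\perp V)$ satisfying \eqref{quadtermxp}. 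All of these algebraic identities are insensitive to the sign convention in the soliton equation, and every absorption into $Q$ proceeds via Lemmas \ref{curvasymp} and \ref{secasymp} just as in the shrinker case.

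The only step that differs is the final substitution. Instead of substituting into $H_{\tilde{F}}=-\tilde{F}^N/2$ and $H_{F_1}=-F_1^N/2$, one substitutes into $H_{\tilde{F}}=+\tilde{F}^N/2$ and $H_{F_1}=+F_1^N/2$. The tangential terms proportional to $H_{F_1}^\beta \partial_l V^\beta \partial_l F$ still cancel, because the base expander equation for $F_1$ relates $H_{F_1}^\beta$ and $F_1^\beta$ by precisely the factor that appears on the right-hand side. Collecting the surviving normal terms and using $\nabla^\perp_{\partial_k}\mathbf{n}_\beta(x_0)=0$ together with $\tilde{g}_{ij}(x_0)=\delta_{ij}$, one arrives at the stated equation \eqref{diffopxp} at $x_0$, and since $x_0$ was arbitrary the identity holds on all of $M_{1,R_4}$.

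The whole argument is essentially a bookkeeping exercise; the only delicate point is tracking the signs through the substitution step, and there is no serious obstacle beyond that. In particular, no new analytic input is needed --- the corollary is really a statement that the linearization procedure of Lemma \ref{linearization} is robust under the sign change in the soliton equation, giving rise to the parallel differential operator $\mathcal{L}_0^+$ on the expander side that will play the same role in later applications of the Bernstein-type machinery.
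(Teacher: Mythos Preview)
Your proposal is correct and follows essentially the same approach as the paper: both argue that all prior computations of Sections 2 and 3 carry over unchanged (since they depend only on $|H|\simeq|F^\perp|$ and the graph/section estimates), then rerun the proof of Lemma \ref{linearization} verbatim up to the substitution step \eqref{selfshrinkeqn}, where the expander equation $H_{\tilde F}=+\tilde F^N/2$ replaces the shrinker equation to yield \eqref{diffopxp}. Your remark that the tangential $H_{F_1}^\beta\partial_l V^\beta\partial_l F$ terms still cancel against the corresponding terms from $F_1^N$ is exactly the observation that makes the sign change propagate cleanly.
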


\begin{proof}
To see that the properties for shrinkers proved in Sections 2 and 3 apply also to expanders, notice that all the estimates in Section 2 depend only on the relations $|H| \simeq |F^\perp|$ and $|(\partial_t F)^\perp| = |H|$, which apply equally to shrinkers and expanders. To prove the rest of the corollary, we follow the proof of Lemma \ref{linearization} until \eqref{selfshrinkeqn} which is the first time the self-shrinker equation is explicitly used. At this step, we instead plug in the self-expander equation:
\begin{align*}
H_{\tilde{F}} &= \tilde{g}^{ij} (\partial^2_{ij} \tilde{F})^\perp = \frac{1}{2} \tilde{F}^\perp
\end{align*}
This becomes
\begin{multline*}\tilde{g}^{ij}(A_{ij}^\beta \mathbf{n}_\beta + \partial^2_{ij}V^\beta \mathbf{n}_\beta - A_{ij}^\beta\partial_l V^\beta \partial_l F) \\ = \frac{1}{2} \big( F^\perp + V^\beta N_\beta - (\partial_l V^\beta F^\beta - \langle \partial_l V, \partial_k V \rangle F^k)\partial_l F - F^l \partial_l V^\beta N_\beta\big) \\ + Q(x_0, V, DV)  
\end{multline*}
Arguing as in Lemma \ref{linearization}, obtain
\[
(\nabla^\perp)^2_{ii} V = \frac{1}{2}V^\beta N_\beta - \frac{1}{2} F^l \partial_l V^\beta N_\beta + Q(x_0, V, \nabla^{\perp}V),  
\]
which can be rewritten as
\[
\Delta^\perp V + \frac{1}{2}\nabla^\perp_{F_1(x_0)^T}V - \frac{V}{2} + Q(x_0,V,\nabla^{\perp}V) = 0
\]
evaluated at $x_0$. This completes the proof of the corollary.
\end{proof}

\section{Unique Continuation on Weakly Conical Ends}

In the second half of the paper, we  prove a unique continuation result for higher codimension self-shrinking ends asymptotic to a cone. We do this by extending to the vector-valued case a recent result \cite{Ber} of Jacob Bernstein on the asymptotic structure of almost eigenfunctions of drift Laplacians on conical ends. 
\\
\indent A weakly conical end is a triple $(\Sigma, g, r)$ consisting of a smooth $m$-dimensional manifold,
$\Sigma$, with $m \ge 2$, a $C^1$-Riemannian metric, g and a proper unbounded $C^2$ function $r: \Sigma \rightarrow (R_\Sigma, \infty)$ where $R_\Sigma \ge 1$, and such that there is a constant $\Lambda \ge 0$ with the property that 
\begin{equation}\label{radderiv}
||\nabla_g r| - 1| \le \frac{\Lambda}{r^4} \le \frac{1}{2} \text{ and } |\nabla^2_gr^2 - 2g| \le \frac{\Lambda}{r^2} \le \frac{1}{2}.
\end{equation}
We consider the tuple $(B, p, \Sigma, h, \nabla)$, which represents a vector bundle $B$ over $\Sigma$ with metric $h$, compatible metric connection $\nabla$, and projection $p$. We furthermore assume that the connection $\nabla$ satisfies the following condition.

\begin{con}\label{cond-curvature-decay}
The curvature $R^\nabla: \Gamma(T\Sigma) \times \Gamma(T\Sigma) \times \Gamma(B) \rightarrow \Gamma(B)$ of the connection $\nabla$ is defined and satisfies
\begin{equation}\label{eq-radial-curvature-decay-bundle}
    R^\nabla (X,r\nabla_g r)W = O(|X||W|r^{-3}),
\end{equation}
for $r \ge R_\Sigma$, and any vector field $X \in \Gamma(T\Sigma)$ and section $W \in \Gamma(B)$.
\end{con}

In particular, Condition \ref{cond-curvature-decay} is satisfied by the curvature $R^\perp$ of the connection $\nabla^\perp$ of the normal bundle on $M_{1,R}$ (see Lemma \ref{lem-norm-bund-curv-decay}).

We can define the drift Laplacian with respect to $\nabla$ by
\[\mathcal{L}_0 = \Delta - \frac{r}{2} \nabla_{\nabla_g r}.
\]
A section $V \in C^2(\Sigma ; B) $ is an almost eigensection if it satisfies
\[|(\mathcal{L}_0 + \lambda) V| \le Mr^{-2} (|V| + |\nabla V|)
\]
Our generalization of \cite[Theorem 1.2]{Ber} to almost eigensections of vector bundles over weakly conical ends is the following:
\begin{thm}\label{mainthm4} If $(\Sigma^m, g, r)$ is a weakly conical end, $(B, p, \Sigma, h, \nabla)$ is a vector bundle over $\Sigma$ with a metric connection $\nabla$ satisfying Condition \ref{cond-curvature-decay}, and $V \in C^2(\Sigma; B)$ is a section that satisfies
\[ |(\mathcal{L}_0 + \lambda) V| \le M r^{-2} (|V| + |\nabla V|) \text{ and } \int_{\Sigma} (|\nabla V|^2 + |V|^2) r^{2-4\lambda} e^{-\frac{r^2}{4}} < \infty,
\]
then there are constants $R_0$ and $K_0$, depending on $V$, so that for any $R \ge R_0$
\[
\int_{\{r \ge R\}} \bigg( |V|^2 +r^2|\nabla V|^2 + r^4 \bigg| \nabla_{\partial_r} V - \frac{2\lambda}{r}V\bigg|^2 \bigg) r^{-1-m-4\lambda} \le \frac{K_0}{R^{m+4\lambda}} \int_{r = R} |V|^2
\]
Moreover, $V$ is asymptotically homogeneous of degree $2\lambda$ and $\tr_\infty^{2\lambda} V = a$ for some section $a \in L^2(L(\Sigma); B|_{L(\Sigma)})$ that satisfies $\alpha^2 = \lim_{\rho \rightarrow \infty} \rho^{1-m-4\lambda} \int_{\{r = \rho\}} |V|^2 = \int_{L(\Sigma)} |a|^2$ and, 
\[
\int_{\{ r \ge R\}} \bigg( |V|^2 + r^2 (|V- A|^2 + |\nabla V|^2) + r^4 \bigg| \nabla_{\partial_r} V - \frac{2\lambda}{r}V\bigg|^2 \bigg) r^{-2-m-4\lambda}  \le \frac{K_0 \alpha^2}{R^2}.
\]
Here $A \in L^2_{loc}(\Sigma; B)$ is the leading term of $V$ and $L(\Sigma)$ is the link of the asymptotic cone. 
\end{thm}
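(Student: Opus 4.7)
My plan is to mimic Bernstein's scalar proof of \cite[Theorem 1.2]{Ber}, upgrading each step to the setting of sections of a metric-compatible bundle $(B, h, \nabla)$. The compatibility identity $X\langle V, W\rangle = \langle \nabla_X V, W\rangle + \langle V, \nabla_X W\rangle$ is what makes the generalization work: it guarantees that every integration-by-parts manipulation at the heart of the scalar argument transfers to sections without introducing any curvature-of-$\nabla$ terms, because the identities on which the argument rests only differentiate a single section $V$ once.

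First I would reduce the bundle problem to a scalar subsolution inequality. The Leibniz rule with respect to a metric connection gives
\[\mathcal{L}_0|V|^2 = 2\langle V, \mathcal{L}_0 V\rangle + 2|\nabla V|^2,\]
and substituting the almost-eigensection hypothesis produces the pointwise inequality
\[\mathcal{L}_0|V|^2 + 2\lambda|V|^2 \ge 2|\nabla V|^2 - 2Mr^{-2}|V|\bigl(|V| + |\nabla V|\bigr).\]
Combined with Kato's inequality $|\nabla|V|| \le |\nabla V|$, this places $|V|$ squarely inside the framework of the scalar theorem. Next, following \cite{Ber}, I would define the height and weighted Dirichlet functionals
\[I(R) = \int_{\{r = R\}} |V|^2, \qquad D(R) = \int_{\{r \le R\}}\bigl(|\nabla V|^2 - \lambda|V|^2\bigr) r^{-4\lambda} e^{-r^2/4}\,dv,\]
and reproduce Bernstein's frequency-type monotonicity by differentiating $I$ through the level-set divergence theorem and feeding in the almost-eigensection equation. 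Because the only bundle-valued quantities appearing are $\langle V, \nabla_{\partial_r} V\rangle$ and $|\nabla V|^2$, metric compatibility alone suffices to reproduce the scalar identities; the error term $Mr^{-2}(|V|+|\nabla V|)$ contributes a perturbation absorbed using the weighted $L^2$ hypothesis and a weighted Cauchy--Schwarz.

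From the monotonicity, a Carleman-type differential inequality for $I(R)$ emerges, and integrating it yields the weighted decay estimate in the first displayed inequality of the theorem. To extract the leading trace, I would then observe that the normalization $R^{1-m-4\lambda}I(R)$ must converge to a nonnegative limit $\alpha^2$; a compactness and Rellich-type argument applied to the rescaled restrictions $R^{-2\lambda} V|_{\{r = R\}}$, combined with the uniform $\nabla V$ bounds supplied by the first decay estimate, produces a section $a \in H^1(L(\Sigma); B|_{L(\Sigma)})$ with $\int_{L(\Sigma)} |a|^2 = \alpha^2$. Setting $A = r^{2\lambda} a$ and re-applying the weighted decay estimate already proved to the difference $V - A$ then delivers the refined integral bound.

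The main obstacle I anticipate lies in the second step: feeding the almost-eigensection error through every level-set integration by parts without degrading Bernstein's monotonicity. In the scalar setting this is accomplished by a delicate weighted Cauchy--Schwarz applied in precisely the right place, but in the bundle version one must repeatedly convert between the quantities $|V|$, $\langle V, \nabla_{\partial_r} V\rangle$, and $|\nabla V|^2$, and any loss of a constant at this stage destroys the monotonicity, and hence the asymptotic homogeneity conclusion. Verifying that the bookkeeping survives the translation to sections, and that no implicit connection-curvature term slips in through the back door when one integrates tangential derivatives along the link, is where the technical core of the proof concentrates.
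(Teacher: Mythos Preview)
Your outline has the right shape but diverges from the paper in one structural reduction and contains one real gap. The paper does \emph{not} pass through Kato's inequality to reduce to the scalar theorem: applying the scalar result to $|V|$ would yield only asymptotic homogeneity of the scalar $|V|$ and a scalar trace on $L(\Sigma)$, not a section $a\in H^1(L(\Sigma);B|_{L(\Sigma)})$. You seem to recognise this, since after the Kato step you revert to the bundle-valued functionals $I(R)$, $D(R)$; at that point the Kato reduction is doing no work and should be discarded. What the paper actually does is redo each identity of \cite{Ber}---the Poincar\'e inequality, the derivatives $B'(\rho)$ and $\hat D_\mu'(\rho)$, and a bundle Rellich--Ne\v{c}as identity---directly for sections, checking that metric compatibility of $\nabla$ suffices. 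The paper also handles the eigenvalue differently from you: rather than embed $\lambda$ in the Dirichlet functional (your $-\lambda|V|^2$ term), it substitutes $\hat V=r^{-2\lambda}V$ and verifies (Proposition~\ref{harm2eigen}) that $\hat V$ is almost $\mathcal L_{4\lambda}$-harmonic, so the frequency analysis is carried out once, for $\lambda=0$, and Theorem~\ref{mainthm4} follows by unwinding the substitution.

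The genuine gap is in your extraction of the leading term. You propose to take a limit of ``rescaled restrictions $R^{-2\lambda}V|_{\{r=R\}}$'' via compactness, but these restrictions are sections of different bundles $B|_{S_R}$ for different $R$, so there is no ambient space in which to take an $L^2$ limit or even to compare them. The paper addresses exactly this in Proposition~\ref{asymphomog}: one pulls sections back along the flow $\Pi_\tau$ of $\mathbf X$ using parallel transport, $\Pi_\tau^*G(p)=P_{\Pi_t(p),\tau}\bigl(G(\Pi_\tau(p))\bigr)$, proves that $\nabla_{\tau^{-1}\mathbf X}$ commutes with this pullback, and then runs a Cauchy-in-$L^2_{loc}$ argument to produce the homogeneous limit. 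This parallel-transport mechanism is the one genuinely new ingredient needed over the scalar case, and your proposal does not supply it.
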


The proof of Theorem \ref{mainthm4} is almost identical to the proof of Theorem 1.2 in \cite{Ber}. However, it is necessary to prove several preliminary estimates and lemmas in the vector-valued case before Bernstein's arguments can be directly applied. We introduce some of the basic notation and terminology from \cite{Ber}, prove the missing results, and indicate their scalar-valued analogues in the original paper. After substituting these modified results, one can follow Bernstein's proofs directly. Note that if a proposition or proof is not included in the sequel, we implicitly assume that it can be extended to the vector-valued case essentially without modification.

\subsection{Basic Definitions and Estimates}
We now introduce some notation and facts about weakly conical ends from \cite{Ber}. The first condition in \eqref{radderiv} ensures that $r$ has no critical points for sufficiently large $\rho \in (R_\Sigma, \infty)$ and $r$ is proper. Thus the level sets $S_\rho = r^{-1}(\rho)$ are compact $C^2$-regular hypersurfaces foliating $\Sigma$. Let $L(\Sigma)= S_{R_L} = S_{R_\Sigma + 1}$ be the \textit{link} of $\Sigma$. Notice that the bound on $\nabla_g r$ guarantees that $L(\Sigma)$ has the same topological type as any other $S_\rho$. 

For any $R \ge R_\Sigma$, let 
\[
E_R = \{p \; : \; r(p) >R\}
\]
and for $R_2 > R_1 \ge R_\Sigma$ define the annuli
\[
A_{R_2,R_1} = E_{R_1}\setminus \bar{E}_{R_2}.
\]
Define three important $C^1$ vector fields on $\Sigma$:
\[
\partial_r = \nabla_g r \;, \; \mathbf{N} = \frac{\nabla_g r}{|\nabla_g r|}\;, \; \text{ and } \mathbf{X} = r \frac{\nabla_g r}{|\nabla_g r|^2}.
\]
We state the following identities without proof. Derivations can be found in Section 2 of \cite{Ber}. The divergences of these three vector fields are 
\begin{equation}\label{divvec}
\divg_g \partial_r = \divg_g \mathbf{N} = \frac{m-1}{r} + O(r^{-3}) \; \text{ and } \divg_g \mathbf{X} = m + O(r^{-2}).
\end{equation}
The asymptotics of the Hessian are
\begin{equation}\label{asymphess}
\nabla^2_g r = \frac{g - dr \otimes dr}{r} + O(r^{-3}).
\end{equation}
For any $C^1$ vector fields $\mathbf{Y}, \mathbf{Z} \in \Gamma(T\Sigma)$
\begin{equation}\label{distsqrdhess}
\mathbf{Y}|\nabla_g r|^2 = O(|\mathbf{Y}|r^{-3})
\end{equation}
\begin{equation}\label{covderivest}
g(\nabla_{\mathbf{Y}}\mathbf{X}, \mathbf{Z}) = g(\mathbf{Y}, \mathbf{Z}) + O(|\mathbf{Y}||\mathbf{Z}|r^{-2})
\end{equation}
If $A_{S_\rho}$ and $H_{S_\rho}$ are the second fundamental form and mean curvature of $S_\rho$, then
\begin{equation}\label{asympcurv}
A_{S_\rho} = \rho^{-1}g_{S_\rho} + O(\rho^{-3}), \;\; H_{S_\rho} = \frac{m-1}{\rho} + O(\rho^{-3}).
\end{equation}
For any $\tau \ge 1$ let 
\[\Pi_\tau : \Sigma \rightarrow E_{\tau R_\Sigma} \]
be the time $\ln \tau$ flow of $\X$ on $\Sigma$. As $\X \cdot r = r$, $\Pi_\tau (S_\rho) = S_{\tau \rho}$ and $\Pi_\tau (E_\rho) = E_{\tau \rho}$. The restriction $\Pi_\tau$ to the link $L(\Sigma)$ is a diffeomorphism $\pi_\tau : L(\Sigma) \rightarrow S_{\tau R_L}$. Let $g(\tau)$ be the $C^1$-Riemannian metric induced by $g$ on $S_\tau$. Take the limit $C^0$-metric on $L(\Sigma)$,
\[
g_L = \lim_{\tau \rightarrow \infty} \pi_\tau^* (r^{-2} g(\tau)).
\]
This limit metric on the link extends to a $C^0$ cone metric on $\Sigma$. Let $g_\tau = \tau^{-2}\Pi_\tau^* g$:
\[
g_C = \lim_{\tau \rightarrow \infty} g_\tau = dr^2 + r^2 g_L .
\]
The $C^0$-Riemannian cone $(\Sigma, g_C)$ with link $(L(\Sigma), g_L)$ is the asymptotic cone of $(\Sigma, g, r)$. Let $d\mu_C$, $d\mu_{g_\tau}$, $d\mu_{g}$ be the densities associated to the metrics $g_C$, $g_\tau$, $g$ respectively. For any compact set $K \subset \Sigma$ there is a $\tau_0 = \tau_0(K)$ so that for any $W \in C^1(K;B)$ and any $\tau \ge \tau_0$,
\begin{equation}\label{L2conetog}
\frac{1}{2} \int_K |W|^2 d\mu_C \le \int_K |W|^2 d\mu_{g_\tau} \le \int_K |W|^2 \Pi^*_\tau (r^{-m} d\mu_g) \le 2 \int_K |W|^2 d\mu_C.
\end{equation}

We define the pullback of a section of a vector bundle as follows. Let $G \in L^2_{loc}(\Sigma; B ; d\mu_g)$ be a section, and also denote by $G$ a choice of pointwise a.e. representative of the equivalence class $[G]$. Then, we define the pullback of $G$ at the point $p$ by the flow $\Pi_\tau$ as follows:
\[
\Pi_\tau^*G (p) = P_{\Pi_t (p), \tau}(G(\Pi_\tau (p))) \in B_p,
\]
where $P_{c,t}$ is defined as the parallel transport from the fiber of $B$ above $c(t)$ to the fiber above $c(0)$ along the path $c: (a,b) \rightarrow \Sigma$. Since $\Pi_t (p)$ is a regular path for fixed $p$, this operation is well defined. 

A section $F \in L^2_{loc}(\Sigma; B ; d\mu_C)$ is homogeneous of degree $d$ if, for all $\tau \ge 1$, $r^{-d}F$ is preserved by the pullback--that is, $\Pi_\tau^* (r^{-d} F) = r^{-d} F$. If $F$ is homogeneous of degree $d$, the restriction $r^{-d}F$ to $L(\Sigma)$ is a well-defined $L^2$ section of the bundle $B$ restricted to the link $L(\Sigma)$. We call this restriction the degree $d$ trace of $F$.
\[f = \tr^d(F)\]
A section $G \in L^2_{loc}(\Sigma; B ; d\mu_g)$ is called asymptotically homogeneous of degree $d$ if 
\[
\lim_{\tau \rightarrow \infty} \Pi_\tau^* (r^{-d} G) = r^{-d}F \text{ in }L^2_{loc}(\Sigma; B; d\mu_C)
\]
where $F$ is a homogeneous section of degree $d$. The section $F$ on the limit cone is called the leading term of $G$. The trace at infinity of $G$ is defined to be the degree $d$ trace of the leading term $F$.
\[\tr^d_\infty(G) = \tr^d(F). \]

\subsection{Extension to the Case of Almost \(\mathcal{L}_{\mu}\)-harmonic Sections}
Let $(\Sigma^m, g, r)$ be a weakly conical end of dimension $m$, and let $(B, p, \Sigma, h, \nabla)$ be a vector bundle of rank $n$ over $\Sigma$ with projection map $p$, bundle metric $h$, and metric connection $\nabla$ satisfying Condition \ref{cond-curvature-decay}. Define the weight function 
\[\Phi_{\m} : \R^+ \rightarrow \R^+, \;\;\Phi_{\m}(t) = t^\m e^{-\frac{t^2}{4}}
\]
As in \cite{Ber}, we will write $\Phi_\m(p)$ instead of $\Phi_\m(r(p))$. The associated drift Laplacian on sections $V \in \Gamma(B)$ will be
\[ \mathcal{L}_\m V = \Delta V - \frac{r}{2} \nabla_{\partial_r} V + \frac{\m}{r}\nabla_{\partial_r} V, 
\]
where $\Delta$ is the trace with respect to $g$ of the double covariant derivative $\nabla^2$ on the vector bundle $B$. Almost $\mathcal{L}_\mu$-harmonic sections are sections that satisfy
\begin{equation}\label{almostharmonic}
|\mathcal{L}_\mu V|_h \le M r^{-2} (|V|_h + |\nabla V|_h),
\end{equation}
where $V$ and $\nabla V$ are contracted with respect to the tensors $h_{\alpha \beta}$ and $h_{\alpha \beta} g^{ij}$, respectively. As is standard, coordinates on the base $\Sigma$ will be given by Latin indices, and coordinates in the fibers $B_x$ will be given by Greek indices. In the rest of the paper, we suppress the mention of specific metric tensors unless ambiguity arises.

As a preliminary, we must define appropriate function spaces for our analysis. For any $R> R_\Sigma$, let $C^l(\bar{E}_R; B)$ be the space of $l$-times differentiable sections of $B$ defined on $E_R$. Consider the weighted $L^2$ norms on sections in $C^l(\bar{E}_R; B)$: 
\[||V||^2_\m = ||V||^2_{\m,0} = \int_{\bar{E}_R} |V|^2 \Phi_\m \] 
and
\[
||V||^2_{\m,1} =||V||^2_\m  + ||\nabla V||^2_\m =  \int_{\bar{E}_R} (|V|^2 + |\nabla V|^2) \Phi_\m.
\]
Let $C^l_\mu(\bar{E}_R; B)$ and $C^l_{\mu,1}(\bar{E}_R; B)$ be the space of $l$-times differentiable sections such that $||V||_\m < \infty$ and $||V||_{\m,1} < \infty$ respectively. 

Fix $R> R_\Sigma$ and a section $V \in C^2(\bar{E_R}; B)$. For each $\rho > R$, define the boundary $L^2$ norm 
\[ 
B(\rho) = \int_{S_\rho} |V|^2 |\nabla_g r| \text{ and } \hat{B}_\m(\rho) = \Phi_\m(\rho) B(\rho),
\]
and define the flux
\[
F(\rho) = \int_{\partial E_\rho} \langle V, \nabla_\mathbf{-N} V \rangle = - \int_{S_\rho} \frac{\langle V, \nabla_{\partial_r} V \rangle}{|\nabla_g r|} \text{ and }\hat{F}_\m(\rho) = \Phi_\m(\rho)F(\rho).
\]
The following quotient is the associated frequency function.
\[
N(\rho) = \frac{\rho F(\rho)}{B(\rho)} = \frac{\rho \hat{F}_\m(\rho)}{\hat{B}_\m(\rho)}
\] 
If $V \in C^2_{\m,1}(\bar{E}_R; B)$, then define the weighted Dirichlet energy 
\[
\hat{D}_\m(\rho) = \int_{\bar{E}_\rho} |\nabla V|^2 \Phi_\m \text{ and } D_\m(\rho) = \Phi_\m(\rho)^{-1} \hat{D}_\m(\rho).
\]
The corresponding frequency function is 
\[
\hat{N}_\m(\rho) = \frac{\rho \hat{D}_\m(\rho)}{\hat{B}_\m(\rho)} = \frac{\rho D_\m(\rho)}{B(\rho)}.
\]
Set
\[
\hat{L}_\m(\rho) = \int_{E_\rho} \langle V, \mathcal{L}_\m V \rangle \Phi_\m \text{ and }L_\m (\rho) = \Phi_\m(\rho)^{-1} \hat{L}_\m(\rho)
\]
If $\mathcal{L}_\m V \in C^0_\m (\bar{E}_R; B)$, as it is when $V \in C^2_{\m,1}(\bar{E}_R; B)$ is almost $\mathcal{L}_\m$-harmonic, we can apply integration by parts for tensor fields and obtain
\begin{align*}
\hat{L}_\m(\rho)  &= \int_{E_\rho} \langle V, \mathcal{L}_\m V \rangle \Phi_\m  \\
&= \int_{E_\rho} \langle \Phi_\mu V, \Delta V \rangle + \int_{E_\rho} \langle V, -\frac{r}{2} \nabla_{\partial_r} V +\frac{\mu}{r} \nabla_{\partial_r} V \rangle \Phi_\mu
\\
&= - \int_{E_\rho} \langle \nabla (\Phi_\m V), \nabla V \rangle - \int_{S_\rho} \langle V \otimes \mathbf{N}^*, \nabla V \rangle \Phi_\m \\
& \hspace{3cm} + \int_{E_\rho} \langle V, - \frac{r}{2} \nabla_{\partial_r} V + \frac{\m}{r}\nabla_{\partial_r} V \rangle \Phi_\m \\
&= -\int_{E_\rho} \langle d \Phi_\m \otimes V, \nabla V \rangle  -\int_{E_\rho} \langle \nabla V, \nabla V \rangle \Phi_\m \\
& \;\;\;\;\;\;\;\;\;\;\; + \Phi_\m (\rho) \int_{S_\rho} \langle V , \nabla_{-\mathbf{N}}V \rangle + \int_{E_\rho} \langle V, - \frac{r}{2} \nabla_{\partial_r} V + \frac{\m}{r}\nabla_{\partial_r} V \rangle \Phi_\m \\
&= -\int_{E_\rho} \bigg(\frac{\m}{r} - \frac{r}{2} \bigg) \langle V, \nabla_{\partial_r} V \rangle \Phi_\m - \hat{D}_\m(\rho) + \hat{F}_\m(\rho) 
\\
& \hspace{3cm} + \int_{E_\rho} \langle V, - \frac{r}{2} \nabla_{\partial_r} V + \frac{\m}{r}\nabla_{\partial_r} V \rangle \Phi_\m \\
&= - \hat{D}_\m(\rho) + \hat{F}_\m(\rho)
\end{align*} 
This yields the useful identities 
\begin{equation}\label{dirichlet}
\hat{F}_\m(\rho) = \hat{D}_\m(\rho) + \hat{L}_\m(\rho) \text{ and } F(\rho) = D_\m(\rho) + L_\m(\rho). 
\end{equation}

We prove a Poincar\'{e} inequality analogous to \cite[Proposition 3.1]{Ber}. 
\begin{prop}\label{poincareineq}
There is an $R_P = R_P(\Lambda, \m, m)$ so that if $R \ge R_P$ and $V \in C^2_{\m,1}(\bar{E}_R; B)$, then 
\[
\int_{\bar{E}_R} |V|^2 \Phi_\m \le \frac{32}{R^2} \hat{D}_\m(R) + \frac{16}{R} \hat{B}_\m(R)
\]
\end{prop}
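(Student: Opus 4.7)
The strategy is a weighted divergence argument using the vector field $\X$ defined earlier, which was chosen so that $\X(r) = r$. This identity makes $\X(\Phi_\mu) = (\mu - r^2/2)\Phi_\mu$, producing a coercive term $-\frac{r^2}{2}|V|^2\Phi_\mu$ whenever we integrate $\divg_g(|V|^2 \X \Phi_\mu)$. Specifically, using metric compatibility of the connection $\nabla$ on $B$ so that $\X|V|^2 = 2\langle V, \nabla_\X V\rangle$, I would write
\[
\divg_g\!\left(|V|^2 \X \Phi_\mu\right) = |V|^2 (\divg_g \X)\Phi_\mu + 2\langle V, \nabla_\X V\rangle \Phi_\mu + |V|^2 \X(\Phi_\mu),
\]
substitute $\divg_g \X = m + O(r^{-2})$ from \eqref{divvec} and $\X(\Phi_\mu) = (\mu - r^2/2)\Phi_\mu$, and integrate over $E_R$. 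The divergence theorem boundary contribution at $S_R$ uses the outward normal $-\mathbf{N}$ and $g(\X,\mathbf{N}) = r/|\nabla_g r|$, producing a term $R\Phi_\mu(R)\int_{S_R} |V|^2/|\nabla_g r|$. Rearranging isolates the coercive term:
\[
\int_{E_R} \tfrac{r^2}{2}|V|^2 \Phi_\mu = \int_{E_R}(m+\mu+O(r^{-2}))|V|^2 \Phi_\mu + 2\int_{E_R}\langle V,\nabla_\X V\rangle \Phi_\mu + R\Phi_\mu(R)\int_{S_R}\tfrac{|V|^2}{|\nabla_g r|}.
\]

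For $R \ge R_P(\Lambda,\mu,m)$ large enough, the lower-order bulk term $(m+\mu+O(r^{-2}))|V|^2\Phi_\mu$ is dominated by, say, $\tfrac{r^2}{4}|V|^2\Phi_\mu$ and can be absorbed into the LHS. For the cross term I would apply Young's inequality in the form $2|\langle V, \nabla_\X V\rangle| \le \epsilon r^2|V|^2 + C_\epsilon |\nabla V|^2$, using $|\X| = r/|\nabla_g r| \le 2r$ (since \eqref{radderiv} gives $|\nabla_g r| \ge 1/2$). Choosing $\epsilon$ small enough that a fixed fraction of the coercive term survives, I then absorb and obtain an inequality of the shape
\[
\int_{E_R} r^2 |V|^2 \Phi_\mu \;\lesssim\; \hat{D}_\mu(R) + R\,\Phi_\mu(R)\int_{S_R}\tfrac{|V|^2}{|\nabla_g r|}.
\]
The boundary factor $\int_{S_R}|V|^2/|\nabla_g r|$ is bounded above by $4\int_{S_R}|V|^2|\nabla_g r| = 4B(R)$ using the two-sided bound $|\nabla_g r| \in [1/2,3/2]$, so the right-hand side is controlled by a multiple of $\hat{D}_\mu(R) + R\,\hat{B}_\mu(R)$. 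The bound $r \ge R$ on $E_R$ converts the left side into $R^2\int_{E_R}|V|^2\Phi_\mu$, after which dividing by $R^2$ yields the claimed inequality.

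The main obstacle is bookkeeping: the stated constants $32/R^2$ and $16/R$ require the Young parameter, the $|\nabla_g r| \ge 1/2$ bound, and the coercive/error splitting to all be tracked carefully and to the sharp end of what the $O(r^{-2})$ weakly-conical error terms permit (which is what determines $R_P$). Conceptually the argument is the standard Ecker--Huisken weighted Poincaré scheme; the only modification in the vector-bundle setting is that metric compatibility of $\nabla$ gives $\X|V|^2 = 2\langle V,\nabla_\X V\rangle$ exactly, so no curvature of $B$ enters the divergence identity, and the scalar proof transfers verbatim.
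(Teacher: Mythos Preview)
Your proposal is correct and essentially identical to the paper's proof. The only cosmetic difference is that the paper applies the divergence theorem to $\mathbf{W} = r|V|^2\Phi_\mu\,\mathbf{N}$ rather than your $|V|^2\Phi_\mu\,\X$; since $\X = r\mathbf{N}/|\nabla_g r|$ and $|\nabla_g r| = 1 + O(r^{-4})$, the two vector fields agree up to lower-order terms absorbed into the error, and the paper's Cauchy--Schwarz/absorbing step $2r\langle\nabla_{\mathbf{N}}V,V\rangle \le \tfrac{r^2}{4}|V|^2 + 4|\nabla V|^2$ is exactly your Young-inequality step (with $|\mathbf{N}|=1$ in place of your $|\X|\le 2r$, which is what produces the specific constants $32$ and $16$).
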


\begin{proof}
Define a tangential vector field $\mathbf{W}(p) = r(p)|V(p)|^2 \Phi_\m(p) \mathbf{N}(p)$. Take its divergence:
\[
\divg_g \mathbf{W} = r|V|^2 \Phi_\m \divg_g(\mathbf{N}) + \langle \nabla_g (r|V|^2 \Phi_\m), \mathbf{N} \rangle
\]
By identity \eqref{divvec} the first summand can be rewritten as 
\[
r|V|^2 \Phi_\m \divg_g(\mathbf{N}) = r|V|^2 \Phi_\m \bigg( \frac{m-1}{r} + O(r^{-3}) \bigg) = (m-1)|V|^2 \Phi_\m + |V|^2\Phi_\m O(r^{-2})
\]
Expand the second summand
\begin{align*}
\langle \nabla (r|V|^2 \Phi_\m), \mathbf{N} \rangle &=  \langle \nabla_g r,\mathbf{N}\rangle |V|^2 \Phi_\m + \langle \nabla_g r,\mathbf{N}\rangle r\bigg( \frac{\m}{r}-\frac{r}{2} \bigg)\Phi_\m |V|^2 + r \Phi_\m 2 h(\nabla_{\mathbf{N}} V, V)  \\
&=  |\nabla_g r||V|^2 \Phi_\m + |\nabla_gr|\bigg( \m-\frac{r^2}{2} \bigg)\Phi_\m |V|^2 + r \Phi_\m 2 h(\nabla_{\mathbf{N}} V, V)  \\
&= (1 + O(r^{-4}))\bigg(1 + \m - \frac{r^2}{2}\bigg)|V|^2 \Phi_\m + r \Phi_\m 2h(\nabla_{\mathbf{N}}V, V) \\
&= \bigg(1 + \m - \frac{r^2}{2}\bigg)|V|^2 \Phi_\m + |V|^2 \Phi_\m O(r^{-2};\m) + r \Phi_\m 2h(\nabla_{\mathbf{N}}V, V)\\
\end{align*}
Combining the two summands, we obtain 
\[\divg_g \mathbf{W} = (m-1)|V|^2 \Phi_\m + \bigg(1 + \m - \frac{r^2}{2}\bigg)|V|^2 \Phi_\m + r \Phi_\m 2h(\nabla_{\mathbf{N}}V, V) +  |V|^2 \Phi_\m O(r^{-2};\m)
\]
We use Cauchy-Schwartz for $h$ and the absorbing inequality to estimate
\begin{align*}
r \Phi_\m 2h(\nabla_{\mathbf{N}}V, V) & \le r \Phi_\m 2 |\nabla_{\mathbf{N}} V|_h |V|_h \\
&\le \frac{r^2 |V|^2 \Phi_\m}{4} + 4|\nabla_{\mathbf{N}}V|^2 \Phi_\m \\
&\le \frac{r^2 |V|^2 \Phi_\m}{4} + 4|\nabla V|^2 \Phi_\m,
\end{align*}
Combining terms, we obtain
\[
\divg_g \mathbf{W} \le (m+\m)|V|^2 \Phi_\m - \frac{r^2}{4} |V|^2 \Phi_\m + 4|\nabla V|^2 \Phi_\m + |V|^2 \Phi_\m O(r^{-2};\m)
\]
For $R>R_P$ sufficiently large, the $r^2$ term will dominate, and the following inequality holds
\[
\divg_g \mathbf{W} \le 4|\nabla V|^2 \Phi_\m - \frac{r^2}{8} |V|^2 \Phi_\m.
\]
As $V \in C^2_{\m,1}(\bar{E}_R)$, the proposition follows immediately from the divergence theorem.
\end{proof}

Next, we derive the formula for the derivative of the boundary $L^2$ norm $B(\rho)$, which should be compared to \cite[Lemma 3.2]{Ber}. 

\begin{lem}\label{dboundarynorm}
We have 
\[B'(\rho) = \frac{m-1}{\rho} B(\rho) - 2 F(\rho) + B(\rho) O(\rho^{-3})
\]
and 
\[
\hat{B}'_\m(\rho) = \frac{m+\m-1}{\rho} \hat{B}_\m(\rho) - \frac{\rho}{2} \hat{B}_\m(\rho) - 2 \hat{F}_\m (\rho) + \hat{B}_\m(\rho)O(\rho^{-3})
\]
\end{lem}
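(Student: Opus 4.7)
The plan is to derive the first identity by applying the divergence theorem to the vector field $\mathbf{W} = |V|^2 \partial_r$ on a thin annulus $A_{\rho+\epsilon,\rho}$ and then passing to the limit $\epsilon \to 0$ using the coarea formula, and to deduce the second identity from the first by a straightforward product rule.

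More precisely, on the annulus $A_{\rho+\epsilon,\rho}$ I would compute
\[
\divg_g \mathbf{W} = |V|^2 \divg_g \partial_r + \nabla_{\partial_r}|V|^2 = |V|^2\Bigl(\tfrac{m-1}{r} + O(r^{-3})\Bigr) + 2\langle V, \nabla_{\partial_r} V\rangle,
\]
using the divergence identity \eqref{divvec} and the fact that $\nabla$ is a metric connection on $B$ (so $\nabla_{\partial_r}|V|^2 = 2\langle V,\nabla_{\partial_r}V\rangle$). Since $\mathbf{W}\cdot\mathbf{N} = |V|^2|\nabla_g r|$, the divergence theorem gives
\[
B(\rho+\epsilon)-B(\rho) = \int_{A_{\rho+\epsilon,\rho}} \divg_g \mathbf{W},
\]
and the coarea formula then yields
\[
B'(\rho) = \int_{S_\rho} \frac{\divg_g \mathbf{W}}{|\nabla_g r|} = \int_{S_\rho} \frac{|V|^2}{|\nabla_g r|}\Bigl(\tfrac{m-1}{\rho} + O(\rho^{-3})\Bigr) \;-\; 2F(\rho).
\]
The one subtlety is that the resulting surface integral contains $|V|^2/|\nabla_g r|$ rather than $|V|^2|\nabla_g r|$. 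Using the pointwise bound $|\nabla_g r| = 1 + O(r^{-4})$ from \eqref{radderiv}, one has $1/|\nabla_g r|^2 = 1 + O(\rho^{-4})$ on $S_\rho$, hence $\int_{S_\rho}|V|^2/|\nabla_g r| = (1+O(\rho^{-4}))B(\rho)$. The extra $O(\rho^{-4})$ factor combines with the $(m-1)/\rho$ prefactor into the overall $B(\rho)O(\rho^{-3})$ error, yielding the first formula.

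For the second identity I would just apply the product rule to $\hat{B}_\m(\rho) = \Phi_\m(\rho)B(\rho)$. The logarithmic derivative of the weight is
\[
\frac{\Phi_\m'(\rho)}{\Phi_\m(\rho)} = \frac{\m}{\rho} - \frac{\rho}{2},
\]
so
\[
\hat{B}_\m'(\rho) = \Bigl(\tfrac{\m}{\rho} - \tfrac{\rho}{2}\Bigr)\hat{B}_\m(\rho) + \Phi_\m(\rho) B'(\rho),
\]
and multiplying the first formula through by $\Phi_\m(\rho)$ converts $\tfrac{m-1}{\rho}B(\rho)$ into $\tfrac{m-1}{\rho}\hat{B}_\m(\rho)$, $-2F(\rho)$ into $-2\hat{F}_\m(\rho)$, and the error into $\hat{B}_\m(\rho)O(\rho^{-3})$; combining $\tfrac{\m}{\rho}+\tfrac{m-1}{\rho} = \tfrac{m+\m-1}{\rho}$ gives the stated formula.

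There is no real obstacle here; the only minor care required is tracking the $|\nabla_g r|$ weight in the coarea / boundary integral interchange and confirming that the resulting discrepancy fits inside the $O(\rho^{-3})$ error. The vector-bundle nature of $V$ enters only through the identity $\nabla_{\partial_r}|V|^2 = 2\langle V,\nabla_{\partial_r}V\rangle$, which holds because $\nabla$ is compatible with the fiber metric $h$, so the scalar-valued computation in \cite[Lemma 3.2]{Ber} transfers without modification.
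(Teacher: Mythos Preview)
Your argument is correct. The paper takes a slightly different but equivalent route: rather than applying the divergence theorem to $|V|^2\partial_r$ on a thin annulus and invoking the coarea formula, it computes $B'(\rho)$ directly as a first variation along the flow of $r^{-1}\mathbf{X}$, obtaining three terms (from differentiating $|V|^2$, from differentiating $|\nabla_g r|$, and from the mean curvature $H_{S_\rho}$ governing the change in the surface measure). In the paper's bookkeeping the $\tfrac{m-1}{\rho}$ coefficient comes from the asymptotic \eqref{asympcurv} for $H_{S_\rho}$, while in yours it comes from \eqref{divvec} for $\divg_g\partial_r$; these are of course the same information, and the handling of the $|\nabla_g r|$ discrepancy via \eqref{radderiv} is identical in both approaches. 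Your version is arguably a bit cleaner since it avoids writing out the first variation formula explicitly, but the two computations are really dual formulations of the same calculation.
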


\begin{proof}
Calculate the first variation with respect to the vector field $r^{-1}\mathbf{X}$. Let $\nu(s) = (\det g(x+ sr^{-1}\mathbf{X}))^{1/2}(\det g(x))^{-1/2}$. 
\begin{align*}
\frac{d}{ds} B(\rho + s) \bigg|_{s=0} &= \frac{d}{ds}\bigg|_{s=0} \int_{S_\rho} |V(x + sr^{-1}\mathbf{X})|^2|\nabla_g r|(x + sr^{-1}\mathbf{X})\nu(s) dVol_{S_\rho}\\
&= \int_{S_\rho} 2\langle V, \nabla_{r^{-1}\mathbf{X}} V\rangle |\nabla_g r| dVol_{S_\rho} + \int_{S_\rho} |V|^2 g(\nabla |\nabla_g r|, r^{-1}\mathbf{X} ) dVol_{S_\rho} \\ 
&\;\;\;\;\;\;\;\;\;\;\;\;\;\;\;+ \int_{S_\rho} |V|^2 |\nabla_g r| g(r^{-1}\mathbf{X},\mathbf{N}) H_{S_\rho} dVol_{S_\rho}\\
&= \int_{S_\rho} 2\langle V, \nabla_{\mathbf{N}} V\rangle dVol_{S_\rho} + \int_{S_\rho}\frac{|V|^2}{2|\nabla_g r|} g(\nabla |\nabla_g r|^2, r^{-1}\mathbf{X} ) dVol_{S_\rho} \\
&\;\;\;\;\;\;\;\;\;\;\;\;\;\;\;+ \int_{S_\rho} |V|^2 |\nabla_g r| g(r^{-1}\mathbf{X},\mathbf{N}) H_{S_\rho} dVol_{S_\rho}\\
&= - 2F(\rho) + \frac{m-1}{\rho}B(\rho) + O(\rho^{-3})B(\rho) \\
& \hspace{3cm} + \frac{1}{2}\int_{S_\rho}|V|^2 g(\nabla |\nabla_g r|^2, r^{-1}\mathbf{X} )|\nabla_g r|^{-1} dVol_{S_\rho} \\
&= - 2F(\rho) + \frac{m-1}{\rho}B(\rho) + O(\rho^{-3})B(\rho)
\end{align*}
We use \eqref{asympcurv} in the fourth equality and \eqref{distsqrdhess} in the final equality. This completes the proof. The second identity follows immediately.
\end{proof}

Now we compute the rate of change of the weighted Dirichlet energy $\hat{D}_\m(\rho)$, analogous to \cite[Proposition 4.2]{Ber}. The need to commute second covariant derivatives creates an error term that does not appear in \cite{Ber}, but appears in the adaptation to Ricci expanders by Deruelle and Schulze \cite[Proposition 5.1]{DerSch}. 

\begin{prop}\label{ddirichletenergy}
If $V \in C^2_{\m,1}(\bar{E}_R; B)$, $\mathcal{L}_\m V \in C^0_\m(\bar{E}_R; B)$ and $\rho \ge R$, then 
\begin{align*}
\hat{D}'_\m(\rho) &= - \frac{2}{\rho} \int_{\bar{E}_\rho} \langle \nabla_{\mathbf{X}} V, \mathcal{L}_\m V \rangle \Phi_\m - 2\int_{S_\rho} \frac{ |\nabla_{\mathbf{N}} V|^2}{|\nabla_g r|} \Phi_\m + \bigg( \frac{m+\m-2}{\rho} -\frac{\rho}{2} \bigg) \hat{D}_\m(\rho) \\
&\;\;\;\;\; - \frac{1}{\rho} \int_\rho^\infty t\hat{D}_\m(t) dt + O(\rho^{-3})\hat{D}_\m(\rho) + O(\rho^{-\frac{9}{2}})\hat{D}_\mu^\frac{1}{2}(\rho)\hat{B}^\frac{1}{2}(\rho).
\end{align*}
\end{prop}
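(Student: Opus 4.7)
My plan is to prove the formula via a Rellich--Pohozaev-type identity applied to the radial vector field $\X$, weighted by $\tfrac12 |\nabla V|^2 \Phi_\m$, followed by a Bochner-type rearrangement that introduces the drift Laplacian $\mathcal{L}_\m V$. Concretely, set $\mathbf{W} := \tfrac12 |\nabla V|^2 \Phi_\m \X$ on $\bar E_\rho$. Since the outward unit normal to $E_\rho$ along $S_\rho$ is $-\mathbf{N}$, and $\langle \X, \mathbf{N}\rangle = r/|\nabla_g r|$, the divergence theorem combined with the coarea identity $\hat D'_\m(\rho) = -\int_{S_\rho} |\nabla V|^2\Phi_\m/|\nabla_g r|$ yields the master identity
\[
\hat D'_\m(\rho) = \frac{2}{\rho}\int_{E_\rho} \divg_g \mathbf{W} \, d\mu_g.
\]
Boundary contributions at $r = \infty$ vanish because $\Phi_\m$ decays like a Gaussian and $V \in C^2_{\m,1}$. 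Everything thus reduces to expanding $\divg_g \mathbf{W}$ modulo acceptable errors.

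Applying the product rule together with $\X r = r$ and $\divg_g \X = m + O(r^{-2})$ from \eqref{divvec} gives
\[
\divg_g \mathbf{W} = \Bigl(\tfrac{m+\m}{2} - \tfrac{r^2}{4}\Bigr)|\nabla V|^2\Phi_\m + \tfrac12 \Phi_\m\, \X(|\nabla V|^2) + O(r^{-2})|\nabla V|^2\Phi_\m.
\]
The troublesome term $\tfrac12 \X(|\nabla V|^2) = g^{ij}\langle \nabla_\X \nabla_i V, \nabla_j V\rangle$ is handled in two stages. First, the commutator $[\nabla_\X, \nabla_i]V$ produces a Christoffel-type piece together with a bundle-curvature piece $g^{ij}\X^k\langle R^B(\partial_i,\partial_k)V, \nabla_j V\rangle$; by \eqref{covderivest} one has $g^{ij}(\nabla_i\X^k)\langle \nabla_k V, \nabla_j V\rangle = |\nabla V|^2 + O(r^{-2})|\nabla V|^2$, and the $R^B$ contribution is absorbed into the $O(r^{-2})|\nabla V|^2$ error (in the intended application $B = NM_1$, the Gauss equation combined with Lemma \ref{curvasymp} gives $|R^B| = O(r^{-2})$). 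Second, the remaining Hessian-type piece $g^{ij}\langle \nabla_i \nabla_\X V, \nabla_j V\rangle\Phi_\m$ is recast via the auxiliary vector field $T^i := g^{ij}\langle \nabla_\X V, \nabla_j V\rangle \Phi_\m$: a direct computation using $\nabla g = \nabla h = 0$ and $d\Phi_\m = (\m/r - r/2)\Phi_\m\, dr$ produces
\[
\divg_g T = g^{ij}\langle \nabla_i\nabla_\X V, \nabla_j V\rangle \Phi_\m + \langle \nabla_\X V, \Delta V\rangle\Phi_\m + \bigl(\tfrac{\m}{r}-\tfrac{r}{2}\bigr)\Phi_\m \langle \nabla_\X V, \nabla_{\partial_r}V\rangle,
\]
and substituting $\Delta V = \mathcal{L}_\m V - (\m/r - r/2)\nabla_{\partial_r}V$ collapses the last two summands into $\langle \nabla_\X V, \mathcal{L}_\m V\rangle \Phi_\m$. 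Integrating $\divg_g T$ over $E_\rho$ produces a boundary integral on $S_\rho$, which via $\nabla_\X V|_{S_\rho} = (\rho/|\nabla_g r|)\nabla_\mathbf{N} V$ becomes $-\rho\int_{S_\rho}|\nabla_\mathbf{N} V|^2\Phi_\m/|\nabla_g r|$.

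It remains to assemble. The $-\tfrac{r^2}{4}|\nabla V|^2\Phi_\m$ contribution is integrated over $E_\rho$ by coarea and integration by parts in $t$:
\[
\int_{E_\rho} r^2|\nabla V|^2\Phi_\m = -\int_\rho^\infty t^2\, \hat D'_\m(t)\, dt = \rho^2 \hat D_\m(\rho) + 2\int_\rho^\infty t\, \hat D_\m(t)\, dt,
\]
with the boundary contribution at infinity again controlled by the Gaussian weight. Summing all contributions and multiplying by $2/\rho$ yields exactly the four groups of terms in the statement; the accumulated $O(r^{-2})|\nabla V|^2\Phi_\m$ errors from \eqref{divvec}, \eqref{covderivest}, and the $R^B$ absorption aggregate, after division by $\rho$, into the announced $O(\rho^{-3})\hat D_\m(\rho)$ remainder. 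The main obstacle is the bundle-curvature piece of the commutator $[\nabla_i,\nabla_\X]V$: in Bernstein's scalar framework \cite{Ber} this term is simply absent, and the whole extension to vector-valued sections hinges on the decay $|R^B| = O(r^{-2})$ for the connection on $B$; keeping this bookkeeping clean through the integration by parts, while matching the stated error rate, is the principal technical point.
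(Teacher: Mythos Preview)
Your approach is essentially the paper's: both write $\hat D'_\m(\rho)$ as $\rho^{-1}$ times the integral over $E_\rho$ of $\divg_g(|\nabla V|^2\Phi_\m\X)$, expand via \eqref{divvec} and \eqref{covderivest}, and then extract the $\mathcal{L}_\m V$ contribution by integrating a Rellich--Ne\v{c}as identity built from the $1$-form $Y\mapsto\langle\nabla_\X V,\nabla_Y V\rangle\Phi_\m$. Your integration-by-parts treatment of $\int_{E_\rho}r^2|\nabla V|^2\Phi_\m$ is equivalent to the paper's Fubini argument.

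One point where you are more careful than the paper: you flag the bundle-curvature piece $g^{ij}\X^k\langle R^B(\partial_i,\partial_k)V,\nabla_j V\rangle$ arising from the commutator $[\nabla_\X,\nabla_i]V$. The paper's derivation silently writes $\nabla_{\partial_j}(\nabla_\X V)=\nabla^2V(\X,\partial_j)+\nabla_{\nabla_{\partial_j}\X}V$, which tacitly swaps the Hessian arguments and so drops exactly this term. However, your stated absorption is not quite right: with $|\X|\sim r$ and $|R^B|=O(r^{-2})$ the contribution is bounded by $O(r^{-1})|V|\,|\nabla V|\,\Phi_\m$, not $O(r^{-2})|\nabla V|^2\Phi_\m$, so it does not land directly in the $O(\rho^{-3})\hat D_\m(\rho)$ remainder as stated. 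For the intended application this is harmless---one can invoke the decay $|V|=O(r^{-1})$ from Lemma~\ref{secasymp}, or feed the extra $|V|$ factor through the Poincar\'e inequality---but to match the proposition exactly as written for a general bundle you should either record an additional error of type $\rho^{-1}\int_{E_\rho}|R^B|\,|\X|\,|V|\,|\nabla V|\,\Phi_\m$ or add the hypothesis $|R^B|=O(r^{-2})$ together with a bound on $|V|$.
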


\begin{proof}
By the coarea formula,
\[
\hat{D}_\m(\rho) = \int_\rho^\infty \Phi_\m(\sigma) \int_{S_\sigma} \frac{|\nabla V|^2}{|\nabla_g r|} dV_{S_\sigma} d\sigma.
\]
Differentiating once and applying \eqref{divvec} yields
\begin{align*}
\hat{D}'_\m(\rho) &= - \Phi_\m(\rho) \int_{S_\rho} \frac{|\nabla V|^2}{|\nabla_g r|} dV_{S_\rho} = \rho^{-1} \int_{S_\rho} |\nabla V|^2 \Phi_\m(r) g(\mathbf{X}, -\mathbf{N}) \\
&= \frac{1}{\rho} \int_{E_\rho} \divg_g (|\nabla V|^2 \Phi_\m(r) \mathbf{X} )\\
&= \frac{1}{\rho} \int_{E_\rho} \bigg( m + \m - \frac{r^2}{2} + O(r^{-2})\bigg)|\nabla V|^2 \Phi_\m + \nabla_{\mathbf{X}} |\nabla V|^2 \Phi_\m
\end{align*}
We analyze the last summand. By metric compatibility with covariant differentiation,
\begin{align*} 
\nabla_\X \langle \nabla V, \nabla V \rangle = 2 \langle \nabla_\X (\nabla V), \nabla V \rangle = 2\langle \nabla^2V(\X, \cdot), \nabla V \rangle
\end{align*}
We derive a weighted Rellich-Ne\v{c}as identity for vector bundles to apply the divergence theorem and extract an $\mathcal{L}_\m$ term from the Hessian term. We take the divergence of the 1-form $L(Y) = \langle \nabla_\X V, \nabla_Y V \rangle$ on the tangent bundle.
\begin{align*}
\divg_g (\langle \nabla_\X V, \nabla V \rangle \Phi_\m) &= g^{ij}(\nabla_{\partial_j} \langle \nabla_\X V, \nabla_{\partial_i}V \rangle )\Phi_\m + g^{ij} \nabla_{\partial_j}\Phi_\m \langle \nabla_{\X} V, \nabla_{\partial_i} V \rangle  \\
&= g^{ij} (\langle \nabla_{\partial_j} (\nabla_\X V) , \nabla_{\partial_i} V \rangle + \langle \nabla_\X V, \nabla_{\partial_j} (\nabla_{\partial_i} V)\rangle )\Phi_\m\\
& \hspace{3cm} + g^{ij} \nabla_{\partial_j}\Phi_\m \langle \nabla_{\X} V, \nabla_{\partial_i} V \rangle\\
&= g^{ij}(\langle \nabla^2 V(\partial_j, \X), \nabla_{\partial_i} V \rangle + \langle \nabla_{\nabla_{\partial_j}\X} V, \nabla_{\partial_i} V \rangle \\
& \;\;\;\;\;\;\;\;\;\; + \langle \nabla_\X V, \nabla^2 V(\partial_i, \partial_j) \rangle + \langle \nabla_\X V, \nabla_{\Gamma_{ij}^k \partial_k} V \rangle )\Phi_\m \\
& \;\;\;\;\;\;\;\;\;\;  + g^{ij} \nabla_{\partial_j}\Phi_\m \langle \nabla_{\X} V, \nabla_{\partial_i} V \rangle 
\end{align*}
We identify the $\mathcal{L}_\m V$ term:
\[ g^{ij}\langle \nabla_\X V, \nabla^2 V(\partial_i, \partial_j) \rangle \Phi_\m + g^{ij}\langle  \nabla_\X V, \nabla_{\Gamma_{ij}^k \partial_k} V \rangle, )\Phi_\m + g^{ij} \nabla_{\partial_j}\Phi_\m \langle \nabla_{\X} V, \nabla_{\partial_i} V \rangle\]
\begin{align*}
&= \langle \nabla_\X V, \Delta V \rangle \Phi_\m + (g^{ij}\partial_j r) \bigg(\frac{\m}{r} - \frac{r}{2}\bigg)\Phi_\m \langle \nabla_\X V, \nabla_{\partial_i} V \rangle \\
&= \langle \nabla_\X V, \Delta V \rangle \Phi_\m + \bigg(\frac{\m}{r} - \frac{r}{2}\bigg)\Phi_\m \langle \nabla_\X V, \nabla_{\nabla_g r} V \rangle \\
&=\langle \nabla_\X V, \mathcal{L}_\m V \rangle \Phi_\m
\end{align*}
We now have 
\begin{align*}
\divg_g (\langle \nabla_\X V, \nabla V \rangle \Phi_\m) &= \langle \nabla^2V(\cdot, \X ), \nabla V \rangle \Phi_\m + \langle \nabla_\X V, \mathcal{L}_\m V \rangle \Phi_\m \\
& \hspace{2cm} + g^{ij}\langle \nabla_{\nabla_{\partial_j}\X} V, \nabla_{\partial_i} V \rangle \Phi_\m
\end{align*}
To evaluate the final term, we calculate at a point $p$ and assume that $(x^i)_{i=1}^n$ are normal coordinates centered at $p$, and we have a local geodesic frame $\{e_\alpha\}_{\alpha =1}^k$ of the fiber space, such that $\nabla_{\partial_i} e_\beta (p) = 0$, for all $\beta =1, \ldots, n$ and $i=1, \ldots m$. Then, at the point $p$, we find that 
\begin{align*}
g^{ij}\langle \nabla_{\nabla_{\partial_j}\X} V, \nabla_{\partial_i} V \rangle &= \delta^{ij} \langle \partial_j\X^k\partial_k V^\alpha e_\alpha, \partial_i V^\beta e_\beta \rangle \\
&=\partial_i \X^k \delta_{\alpha \beta} \partial_k V^\alpha \partial_i V^\beta \\
&= \sum_{\alpha =1}^k g( (\partial_i V^\alpha \partial_i \mathbf{X}^k)\partial_k, \partial_k V^\alpha \partial_k)\\
&= \sum_{\alpha =1}^k g(\nabla_{\nabla V^\alpha} \X, \nabla V^\alpha) \\
&= \sum_{\alpha = 1}^k \bigg( |\nabla V^\alpha|_g^2 + O(r^{-2})|\nabla V^\alpha|_g^2 \bigg) \\
&= |\nabla V|^2 + O(r^{-2})|\nabla V|^2, 
\end{align*}
where the second to last equality comes from equation \eqref{covderivest}. The Rellich-Ne\v{c}as identity is
\begin{align*}\langle \nabla^2V(\cdot, \mathbf{X}), \nabla V \rangle \Phi_\m &= \divg_g (\langle \nabla_\mathbf{X} V, \nabla V \rangle \Phi_\m) - \langle \nabla_\mathbf{X} V, \mathcal{L}_\m V \rangle \Phi_\m \\
& \hspace{1cm} - |\nabla V|^2\Phi_\m - O(r^{-2})|\nabla V|^2\Phi_\m
\end{align*}
Before we can plus this identity into our original expression, we must commute $\X$ to the first position in the Hessian $\nabla^2 V$. We obtain
\begin{align*}\langle \nabla^2V(\mathbf{X},\cdot), \nabla V \rangle \Phi_\m &= \divg_g (\langle \nabla_\mathbf{X} V, \nabla V \rangle \Phi_\m) - \langle \nabla_\mathbf{X} V, \mathcal{L}_\m V \rangle \Phi_\m  \\
& \qquad - \langle R^\nabla(\X, \cdot )V, \nabla V \rangle \Phi_\m - |\nabla V|^2\Phi_\m + O(r^{-2})|\nabla V|^2\Phi_\m.
\end{align*}
We assume that $R^\nabla$ satisfies Condition \ref{cond-curvature-decay} and obtain
\begin{align*}\langle \nabla^2V(\mathbf{X},\cdot), \nabla V \rangle \Phi_\m &= \divg_g (\langle \nabla_\mathbf{X} V, \nabla V \rangle \Phi_\m) - \langle \nabla_\mathbf{X} V, \mathcal{L}_\m V \rangle \Phi_\m  \\
& \qquad - |\nabla V|^2\Phi_\m + O(r^{-2})|\nabla V|^2\Phi_\m + O(r^{-3})|V||\nabla V|\Phi_\m.
\end{align*}

Plug this into our original expression and apply the divergence theorem. 
\begin{align}\label{eq-derivative-dirichlet}
\hat{D}'_\m(\rho) &= \frac{1}{\rho}\int_{E_\rho} \bigg(-2\langle \nabla_\X V, \mathcal{L}_\m V\rangle + ( m + \m - 2) |\nabla V|^2 - \frac{r^2}{2}|\nabla V|^2 \bigg) \Phi_\m \nonumber \\
&\qquad +O(\rho^{-4})\int_{E_\rho} |V||\nabla V|\Phi_m - \frac{2}{\rho} \int_{S_\rho}  \langle \nabla_{\mathbf{X}} V, \nabla_{\mathbf{N}} V\rangle \Phi_\m + O(\rho^{-3}) \hat{D}_\m(\rho).
\end{align}
Estimate the second integral in \eqref{eq-derivative-dirichlet} using the Cauchy-Schwarz inequality and Proposition \ref{poincareineq}. 
\begin{align*}\label{eq-derivative-dirichlet}
\int_{E_\rho} |V||\nabla V|\Phi_\m &\le \bigg(\int_{E_\rho} |\nabla V|^2 \Phi_\m\bigg)^\frac{1}{2}\bigg(\int_{E_\rho} |V|^2 \Phi_\m\bigg)^\frac{1}{2} \\
& \le \hat{D}_\m^\frac{1}{2}(\rho)\bigg( \frac{8}{\rho}\hat{D}^\frac{1}{2}_\mu(\rho) + \frac{4}{\sqrt{\rho}}\hat{B}^\frac{1}{2}_\m (\rho) \bigg)\\
& \le O(\rho^{-1})\hat{D}_\m(\rho) + O(\rho^{-\frac{1}{2}})\hat{D}_\m^\frac{1}{2}(\rho) \hat{B}^\frac{1}{2}_\m (\rho)
\end{align*}
We plug this bound into \eqref{eq-derivative-dirichlet} and obtain
\begin{align*}
\hat{D}'_\m(\rho) &= \frac{1}{\rho}\int_{E_\rho} \bigg(-2\langle \nabla_\X V, \mathcal{L}_\m V\rangle + ( m + \m - 2) |\nabla V|^2 - \frac{r^2}{2}|\nabla V|^2 \bigg) \Phi_\m \\
&\qquad - \frac{2}{\rho} \int_{S_\rho}  \langle \nabla_{\mathbf{X}} V, \nabla_{\mathbf{N}} V\rangle \Phi_\m + O(\rho^{-3}) \hat{D}_\m(\rho) +  O(\rho^{-\frac{9}{2}})\hat{D}_\m^\frac{1}{2}(\rho) \hat{B}^\frac{1}{2}_\m (\rho)\\
&= \frac{1}{\rho} \int_{E_\rho} \bigg( -2\langle \nabla_\X V, \mathcal{L}_\m V\rangle + \bigg(\frac{\rho^2}{2} - \frac{r^2}{2} \bigg) |\nabla V|^2 \Phi_\m \\
&\qquad + \bigg( \frac{m+\m-2}{\rho} - \frac{\rho}{2} \bigg)\hat{D}_\m(\rho) - 2 \int_{S_\rho} \frac{|\nabla_{\mathbf{N}} V|^2}{|\nabla_g r|} \Phi_\m \\
&\qquad \qquad + O(\rho^{-3}) \hat{D}_\m(\rho) +  O(\rho^{-\frac{9}{2}})\hat{D}_\m^\frac{1}{2}(\rho) \hat{B}^\frac{1}{2}_\m (\rho).
\end{align*}

Exactly as in \cite{Ber}, one establishes using the coarea formula and Fubini's theorem that 
\[\int_\rho^\infty t \hat{D}_\m(t) dt = \int_{E_\rho} \bigg( \frac{r^2}{2} - \frac{\rho^2}{2} \bigg) |\nabla V|^2 \Phi_\m
\]
This completes the proof of the proposition. 
\end{proof}

By replacing \cite[Proposition 4.2]{Ber} and \cite[Lemma 3.2]{Ber} by Lemma \ref{dboundarynorm} and Proposition \ref{ddirichletenergy} above, one can follow the proof of \cite[Corollary 4.3]{Ber} almost exactly to obtain the following corollary.

\begin{cor}\label{cor-dfreq}
If $V \in C^2_{\m,1}(\bar{E}_R; B)$, $\mathcal{L}_\m V \in C^0_\m(\bar{E}_R; B)$ and $B(\rho) >0 $ at $\rho \ge R$, then, 
\begin{align*} \hat{N}'_\m (\rho) &= \frac{-2 \int_{E_\rho} \langle \nabla_{\mathbf{X}} V + N(\rho)V, \mathcal{L}_\m V \rangle \Phi_\m - \int_{\rho}^\infty t\hat{D}_\m(t) dt }{\hat{B}_\m(\rho)} \\
&- \frac{\frac{2}{\rho} \int_{S_\rho} |\nabla_{\mathbf{X}} V + N(\rho) V|^2 |\nabla_g r| \Phi_\m}{\hat{B}_\m(\rho)} + \hat{N}_\m(\rho) O(\rho^{-3}) + \hat{N}^\frac{1}{2}_\m(\rho) O(\rho^{-4}) 
\end{align*}
\end{cor}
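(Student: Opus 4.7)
The plan is to apply the quotient rule to the definition $\hat{N}_\m(\rho) = \rho \hat{D}_\m(\rho)/\hat{B}_\m(\rho)$ and then substitute the two formulas just proved. Differentiating gives
\[
\hat{N}'_\m(\rho) = \frac{\hat{N}_\m(\rho)}{\rho} + \frac{\rho \hat{D}'_\m(\rho)}{\hat{B}_\m(\rho)} - \hat{N}_\m(\rho)\frac{\hat{B}'_\m(\rho)}{\hat{B}_\m(\rho)}.
\]
Substituting Lemma \ref{dboundarynorm} and Proposition \ref{ddirichletenergy}, the contributions polynomial in $\rho$ to the coefficient of $\hat{N}_\m$ are $\tfrac{1}{\rho} + \tfrac{m+\m-2}{\rho} - \tfrac{\rho}{2} - \tfrac{m+\m-1}{\rho} + \tfrac{\rho}{2}$, which telescope to zero. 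In particular, the $\rho/2$ terms (that would otherwise blow up) cancel because of the compatible signs in the two preceding results, and all the $O(\rho^{-3})$ errors collect into a single $\hat{N}_\m(\rho) O(\rho^{-3})$ term by virtue of being multiplied by $\hat{D}_\m$ or $\hat{B}_\m$ before dividing.

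What remains after these cancellations is the expression
\[
-\frac{2}{\hat{B}_\m}\int_{E_\rho}\langle \nabla_\X V, \mathcal{L}_\m V\rangle \Phi_\m - \frac{2\rho}{\hat{B}_\m}\int_{S_\rho}\frac{|\nabla_{\mathbf{N}} V|^2}{|\nabla_g r|}\Phi_\m - \frac{1}{\hat{B}_\m}\int_\rho^\infty t\hat{D}_\m(t)\,dt + \frac{2 N(\rho)\hat{N}_\m(\rho)}{\rho},
\]
where the last summand comes from the $-2\hat{F}_\m/\hat{B}_\m = -2N(\rho)/\rho$ piece of $\hat{B}'_\m/\hat{B}_\m$. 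The second step is to convert this to the ``shifted'' form using the identity \eqref{dirichlet}, $\hat{F}_\m = \hat{D}_\m + \hat{L}_\m$, and the elementary relation $N(\rho)/\rho = \hat{F}_\m/\hat{B}_\m$.

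On the interior, I would add and subtract $-\frac{2N(\rho)}{\hat{B}_\m}\int_{E_\rho}\langle V, \mathcal{L}_\m V\rangle \Phi_\m = -\frac{2N(\rho)\hat{L}_\m(\rho)}{\hat{B}_\m}$, producing the desired inner product $\langle \nabla_\X V + N(\rho) V, \mathcal{L}_\m V\rangle$ together with an explicit scalar correction. On the boundary, I would use $\nabla_\X V = (\rho/|\nabla_g r|)\nabla_{\mathbf N} V$ on $S_\rho$ to rewrite $-\tfrac{2}{\rho}|\nabla_\X V|^2 |\nabla_g r| = -2\rho|\nabla_{\mathbf N} V|^2/|\nabla_g r|$, and then complete the square. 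The cross term produces $-\tfrac{4N(\rho)}{\rho}\int_{S_\rho}\langle \nabla_\X V, V\rangle|\nabla_g r|\Phi_\m = -4N(\rho)\int_{S_\rho}\langle \nabla_{\mathbf N} V, V\rangle\Phi_\m = 4N(\rho)\hat{F}_\m(\rho)$, and the new $|N(\rho)V|^2$ term contributes $-2N(\rho)^2\hat{B}_\m/\rho$. Dividing both corrections by $\hat{B}_\m$ and using $\hat{F}_\m/\hat{B}_\m = N(\rho)/\rho$ and $\hat{L}_\m/\hat{B}_\m = N(\rho)/\rho - \hat{N}_\m(\rho)/\rho$, the explicit scalars collect as
\[
-\frac{2N(\rho)\hat{L}_\m}{\hat{B}_\m} + \frac{4N(\rho)^2}{\rho} - \frac{2N(\rho)^2}{\rho} + \frac{2N(\rho)\hat{N}_\m(\rho)}{\rho} = 0,
\]
which is precisely the cancellation that yields the stated identity.

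The main obstacle is really the bookkeeping in these two completions of squares; one has to make sure that the extra scalar terms introduced by inserting $N(\rho) V$ into each integrand cancel exactly against the ``leftover'' $2N(\rho)\hat{N}_\m/\rho$ from the direct substitution and against the mixed term $-2N(\rho)\hat{L}_\m/\hat{B}_\m$. Everything else is algebraic manipulation of the two preceding formulas and the identity \eqref{dirichlet}; no new analytic content is required. For the error terms, one simply notes that each $O(\rho^{-3})$ factor in Lemma \ref{dboundarynorm} and Proposition \ref{ddirichletenergy} multiplies either $\hat{B}_\m$ or $\hat{D}_\m$, so after dividing by $\hat{B}_\m$ all errors are uniformly bounded by $\hat{N}_\m(\rho)O(\rho^{-3})$ as claimed.
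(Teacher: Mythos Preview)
Your approach is exactly the one the paper intends: differentiate $\hat N_\mu = \rho\hat D_\mu/\hat B_\mu$, substitute Lemma~\ref{dboundarynorm} and Proposition~\ref{ddirichletenergy}, observe that the polynomial-in-$\rho$ coefficients of $\hat N_\mu$ telescope, and then complete the squares in the interior and boundary integrals using the identity \eqref{dirichlet}. The paper itself does not write this out; it simply says the argument is identical to Bernstein's proof of the corresponding corollary once the vector-valued Lemma~\ref{dboundarynorm} and Proposition~\ref{ddirichletenergy} are in hand.

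There is, however, a sign slip in your final displayed cancellation. When you pass from the raw interior term to $-\tfrac{2}{\hat B_\mu}\int\langle\nabla_{\mathbf X}V+N(\rho)V,\mathcal L_\mu V\rangle\Phi_\mu$, the leftover scalar is $+\tfrac{2N(\rho)\hat L_\mu}{\hat B_\mu}$, not $-\tfrac{2N(\rho)\hat L_\mu}{\hat B_\mu}$; and when you replace the boundary term by the shifted square, the leftover scalar is $-\tfrac{2N(\rho)^2}{\rho}$ (not $+\tfrac{4N^2}{\rho}-\tfrac{2N^2}{\rho}$), since the cross and square terms appear in the \emph{target} expression and must be subtracted off. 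With the correct signs the residual is
\[
+\frac{2N(\rho)\hat L_\mu}{\hat B_\mu}-\frac{2N(\rho)^2}{\rho}+\frac{2N(\rho)\hat N_\mu(\rho)}{\rho}
=\Bigl(\frac{2N^2}{\rho}-\frac{2N\hat N_\mu}{\rho}\Bigr)-\frac{2N^2}{\rho}+\frac{2N\hat N_\mu}{\rho}=0,
\]
which gives the stated identity. Your displayed sum, as written, equals $\tfrac{4N\hat N_\mu}{\rho}$ rather than $0$. This is purely a bookkeeping error and does not affect the method.
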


\begin{proof}
The proof proceeds exactly as in \cite{Ber}, except that we obtain the last term by noticing that
\[
\frac{\rho \hat B_\m (\rho) \hat D_\m^\frac{1}{2}(\rho)\hat B_\m^\frac{1}{2}(\rho)}{\hat B_\mu^2 (\rho)}O\big(\rho^{-\frac{9}{2}}\big) = \hat N_\m^\frac{1}{2}(\rho)O(\rho^{-4}).
\]
\end{proof}

The modified statements in Proposition \ref{ddirichletenergy} and Corollary \ref{cor-dfreq} are used in four places in the arguments on pp. 9-15 \cite{Ber} to prove the central frequency decay estimates: the proofs of \cite[Proposition 4.4]{Ber}, \cite[Proposition 4.5]{Ber}, \cite[Lemma 5.3]{Ber}, and \cite[Proposition 5.4]{Ber}. The extra terms affect the proofs only very slightly, but we will recall some of these results and pinpoint exactly how to absorb these terms.

\begin{prop}\cite[Proposition 4.4]{Ber} If $V \in C^2_{\m+2,1}(\bar E_R ; B)$ is non-trivial and satisfies \eqref{almostharmonic}, then there is an $R$ depending on $n,m, M, \Lambda, \mu$ so that if $\rho \ge R$, then
\[ N'(\rho) \le \frac{1}{4}\rho^{-1}|N(\rho)| + O(1; M, \mu).\]
\end{prop}

\begin{proof}
The AM-GM inequality allows us to absorb the term $O(\rho^{-\frac{9}{2}})\hat{D}_\mu^\frac{1}{2}(\rho)\hat{B}^\frac{1}{2}(\rho)$ into the term $(\hat{D}_\mu(\rho) + \rho^{-1}\hat{B}_\m(\rho))O(\rho^{-2}; M)$ in the first upper bound for $\hat D'_\m (\rho)$.  
\end{proof}

The extra terms are absorbed trivially into the inequality given by \cite[Corollary 4.5]{Ber}, and the analogous result in our setting holds. Moving into \S 5 of \cite{Ber}, we note the following decay condition on the frequency functions for $V \in C^2_{\m+2,1}(\bar E_R ; B)$: For all $\rho \ge R$
\begin{equation}\label{eq-freq-gamma-decay}
\hat N_\mu (\rho) \le \eta \rho^{2\gamma} \le \eta
\end{equation}
where $\gamma \in [-1,0]$ and $\eta > 0$. 

\begin{lem}\cite[Lemma 5.3]{Ber} If $V \in C^2_{\m+2,1}(\bar E_R ; B)$ is non-trivial and satisfies \eqref{almostharmonic} and \eqref{eq-freq-gamma-decay} for some $\gamma$ and $\eta$, then there are constants $\check{R} \ge R$ and $\check{K}$ depending on $m,n,\Lambda,M, \mu, \eta$, so that, for $\rho \ge \check{R}$
\[
\int_{E_\rho} \langle \mathbf{N}, V \rangle^2 \Phi_\m \le \frac{\check{K}}{\rho} \bigg( \hat N_\m(\rho) - \hat N_\m(\rho +2)\bigg) \hat B_\m (\rho) + \check{K}\rho^{-2+2\gamma}\hat B_\m (\rho).
\]
\end{lem}

\begin{proof}
Define $\tilde{D}(\rho) = \rho^{-n} D_m(\rho)$. In the second upper bound for $\tilde D'(\rho)$, we may absorb the term $O(\rho^{-\frac{9}{2}})\hat{D}_\mu^\frac{1}{2}(\rho)\hat{B}^\frac{1}{2}(\rho)$ into the term 
\[
\frac{2K_2}{\rho^{2+n}\Phi_\m (\rho)} \bigg(\frac{1}{\rho^\frac{1}{2}} \hat D_\m^\frac{1}{2} (\rho) \hat B_\m^\frac{1}{2} (\rho) \bigg)
\]
at the cost of possibly increasing a constant. 
\end{proof}

The final use of Corollary \ref{cor-dfreq} is in the proof of \cite[Proposition 5.4]{Ber}.

\begin{prop}\cite[Proposition 5.4]{Ber} If $V \in C^2_{\m+2,1}(\bar E_R ; B)$ is non-trivial and satisfies \eqref{almostharmonic} and \eqref{eq-freq-gamma-decay} for some $\gamma$ and $\eta$, then there are constants $\check{R} \ge R$ and $\Gamma \ge 0$ depending on $V$, so that, for $\rho \ge \check R$, either
\begin{enumerate}
    \item $\hat N_\m (\rho + 2) - \hat N_\m(\rho) \le -4 \rho^{-1} \hat N_\m (\rho)$, or
    \item $\hat N_\m (\rho + 1) - \hat N_\m(\rho) \le -2 \rho^{-1} \hat N_\m (\rho) + \Gamma\rho^{-2+2\gamma}$.
\end{enumerate}
\end{prop}

\begin{proof}
When Corollary \ref{cor-dfreq} (resp. \cite[Corollary 4.3]{Ber}) is applied to bound $\hat N_m'(\rho)$ near the end of the proof, notice that by \eqref{eq-freq-gamma-decay}
\[
 \hat{N}^\frac{1}{2}_\m(\rho) O(\rho^{-4}) = O(\rho^{-4 + \gamma}) = O(\rho^{-2 + 2\gamma}), 
\]
since $\gamma \in [-1,0]$. Thus, it may be absorbed into the term $\tfrac{2}{10}\Gamma\rho^{-2 + 2\gamma}$. The proof for \cite[Proposition 5.4]{Ber} can be continued without further modifications to prove the statement of the Proposition.
\end{proof}

With these results in hand, one can directly apply the arguments on pp. 14-15 \cite{Ber} to prove the crucial frequency decay estimates in \cite[Theorem 4.1]{Ber} and \cite[Theorem 5.1]{Ber}, which we summarize here:

\begin{thm} If $V \in C^2_{\m+2,1}(\bar{E}_R; B)$ satisfies (4.9) and is non-trivial, then
\[\lim_{\rho \rightarrow \infty} \hat{N}_\m (\rho) = \lim_{\rho \rightarrow \infty} N(\rho) = 0.
\]
Furthermore,
\[\lim_{\rho \rightarrow \infty} \rho^2 \hat{N}_\m(\rho) = \xi [V] \in [0,\infty).
\]
In particular, there is a $\rho_{-1} \ge \R$ so that for $\rho \ge \rho_{-1}$ and $\bar{\xi} = \max\{2 \xi[V], 1\}$, 
$\hat{N}_\m(\rho) \le \rho^{-2} \bar{\xi} \le 1$ and $(\xi[V] - K_2) \rho^{-2} \le N(\rho) \le (\xi[V] + K_2)\rho^{-1}$.
\end{thm}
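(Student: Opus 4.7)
The plan is to transfer Bernstein's proofs of his Theorems 4.1 and 5.1 to our vector-bundle setting, using the analogues already established: Lemma \ref{dboundarynorm}, Propositions \ref{poincareineq} and \ref{ddirichletenergy}, and the Corollary immediately preceding the theorem statement. Because each of these tools has been proved for sections valued in a vector bundle with a metric-compatible connection, the scalar-case arguments in [Ber, Sections 4--5] carry through essentially verbatim. I would organize the work in two stages corresponding to the two conclusions.

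\textbf{Stage 1 ($\hat{N}_\m(\rho) \to 0$).} I would start from the derivative formula of the preceding Corollary, schematically
\[
\hat{N}'_\m(\rho) = -\frac{\tfrac{2}{\rho}\!\int_{S_\rho}|\nabla_\X V + N(\rho)V|^2 |\nabla_g r|\Phi_\m + \int_\rho^\infty t\hat{D}_\m(t)\,dt}{\hat{B}_\m(\rho)} + \mathcal{E}(\rho) + \hat{N}_\m(\rho)\,O(\rho^{-3}),
\]
in which the boundary square and the interior energy integral are manifestly non-negative, and $\mathcal{E}(\rho)$ is the error originating from $\mathcal{L}_\m V$. The almost-harmonic hypothesis $|\mathcal{L}_\m V| \le Mr^{-2}(|V|+|\nabla V|)$, together with Cauchy--Schwarz on the fibre metric and the Poincaré inequality (Proposition \ref{poincareineq}), bounds $\mathcal{E}(\rho)$ by $O(\rho^{-2})\hat{N}_\m(\rho)$ plus an integrable weighted quantity. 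Combining this with the integrability assumption $V \in C^2_{\m+2,1}$ (which puts the extra $r^2$ weight needed to integrate the bad terms against $\Phi_\m$) gives a differential inequality forcing $\liminf \hat{N}_\m(\rho) = 0$, and a continuation/barrier argument in the style of [Ber] promotes this to a full limit. The corresponding conclusion $N(\rho) \to 0$ then follows from the identity $\hat{F}_\m = \hat{D}_\m + \hat{L}_\m$ in \eqref{dirichlet} and the same estimate on $\hat{L}_\m$.

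\textbf{Stage 2 ($\rho^2\hat{N}_\m(\rho) \to \xi[V]$).} Having established $\hat{N}_\m \to 0$, the derivative formula can be refined: after dividing by $\hat{B}_\m$ and using Lemma \ref{dboundarynorm} to relate the boundary integrand $|\nabla_\X V + N V|^2$ to $B'$, the negative term produces a leading contribution proportional to $\hat{N}_\m^2$, while the remaining contributions to $\hat{N}'_\m$ are integrable in $\rho$. This yields a one-sided inequality for $(\rho^2\hat{N}_\m)'$ whose right-hand side has an integrable tail, so the monotone-modulo-error quantity $\rho^2\hat{N}_\m(\rho)$ converges to a finite $\xi[V] \in [0,\infty)$. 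The stated bounds $\hat{N}_\m(\rho) \le \bar\xi\rho^{-2}$ and $(\xi[V] - K_2)\rho^{-2} \le N(\rho) \le (\xi[V] + K_2)\rho^{-1}$ follow by substituting this asymptotic into \eqref{dirichlet} and absorbing the almost-harmonic error into the constant $K_2$ via Grönwall.

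\textbf{Expected main obstacle.} The chief subtlety in the vector-valued adaptation is verifying that the pointwise Cauchy--Schwarz underlying the first (negative) term in $\hat{N}'_\m$ actually produces a genuine non-negative square; this is where metric compatibility of $\nabla$ with the fibre metric $h$ is essential, since it lets one split $\X\langle V,\nabla_Y V\rangle$ exactly as in the scalar case. A secondary technical point is ensuring the constant $K_2$ in the two-sided bound on $N(\rho)$ is uniform in $\rho \ge \rho_{-1}$, which requires running a Grönwall estimate on the Stage-2 differential inequality rather than integrating naively; once this is done, the remainder of [Ber, Sections 4--5] applies mutatis mutandis.
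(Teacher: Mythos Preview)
Your proposal is correct and aligns with the paper's own treatment: the paper does not give a detailed proof here but simply states that once the vector-valued analogues (Proposition \ref{poincareineq}, Lemma \ref{dboundarynorm}, Proposition \ref{ddirichletenergy}, and the preceding Corollary) are in hand, ``one can directly apply the arguments on pp.\ 9--15 \cite{Ber}'' to obtain the result. Your two-stage outline is in fact a more fleshed-out account of precisely those arguments than the paper itself provides, and your identification of metric compatibility as the one place requiring care in the vector-bundle transfer is apt.
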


In order to prove the analogue of \cite[Theorem 6.1]{Ber} for vector bundles, it remains to prove a vector-valued version of \cite[Proposition A.1]{Ber}. 

\begin{prop}\label{asymphomog}
If $G \in C^1(\Sigma; B)$ satisfies 
\[ \int_{E_R} r^{-m} |\nabla G|^2 + r^{2-m} |\nabla_{\partial_r} G|^2 d\mu_g \le \alpha^2 R^{-2}
\]
for all $R \ge R_H$, then $G$ is asymptotically homogeneous of degree 0. Moreover, if $F$ is the leading term of $G$, then $F \in L^2_{loc} (\Sigma; B)$ and 
\[
\int_{E_R} r^{-m} |F-G|^2 d\mu_g \le 16 \alpha^2 R^{-2}.
\]
\end{prop}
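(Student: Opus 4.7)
The strategy is to adapt Bernstein's scalar argument by replacing pointwise values of $G$ with their parallel transports along radial flow lines of $\X$. For each $p \in E_{R_H}$ with $r(p) = \rho$, let $c_p: [\rho, \infty) \to \Sigma$ denote the $\X$-flow line through $p$ reparametrized by radial distance, so $c_p(\rho) = p$, $r(c_p(t)) = t$, and $\dot c_p = \X/t$. Since $|\nabla_g r|^2 \ge 1/2$ by \eqref{radderiv}, the curve has bounded speed and $|\nabla_{\dot c_p} G| \le C|\nabla_{\partial_r} G|$. Because $\nabla$ is a metric connection, parallel transport $P_{c_p, t}: B_{c_p(t)} \to B_p$ is a fiberwise isometry, and the fundamental theorem of calculus applied to the section-valued curve $t \mapsto P_{c_p, t}(G(c_p(t))) \in B_p$ yields, for $\rho \le t_1 < t_2$,
\[
\bigl|P_{c_p, t_2}G(c_p(t_2)) - P_{c_p, t_1}G(c_p(t_1))\bigr| \le C\int_{t_1}^{t_2}|\nabla_{\partial_r}G(c_p(t))|\,dt.
\]
Define the tentative leading section $F(p) := \lim_{\tau \to \infty}\Pi_\tau^* G(p)$.

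To show the limit exists in $L^2_{\text{loc}}(\Sigma;B;d\mu_C)$, integrate over the link $L(\Sigma)$ using Minkowski's integral inequality and the isometry property of $P$ to obtain
\[
\Bigl(\int_{L(\Sigma)}\bigl|P_{c_q,t_2}G - P_{c_q,t_1}G\bigr|^2\,dV_L\Bigr)^{1/2} \le C\int_{t_1}^{t_2} B(t)^{1/2}\,dt,
\]
where $B(t) = \int_{L(\Sigma)}|\nabla_{\partial_r}G(c_q(t))|^2\,dV_L(q)$. The hypothesis together with the coarea formula and the asymptotic cone comparison $d\mu_g \asymp \rho^{m-1}\,d\rho\,dV_L$ gives $\int_\rho^\infty t B(t)\,dt \le C_1\alpha^2\rho^{-2}$. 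Applying Cauchy--Schwarz on each dyadic annulus $[2^k\rho,2^{k+1}\rho]$ yields $\int_{2^k\rho}^{2^{k+1}\rho}B^{1/2}\,dt \le (\log 2)^{1/2}C_1^{1/2}\alpha/(2^k\rho)$, and summing the geometric series in $k$ produces the key bound $\int_\rho^\infty B(t)^{1/2}\,dt \le C_2\alpha/\rho$. Hence $\{\Pi_\tau^* G\}$ is Cauchy in $L^2(L(\Sigma);B|_{L(\Sigma)})$ uniformly for $\rho$ in compact subsets of $(R_H,\infty)$, so $F$ exists and satisfies $\int_{L(\Sigma)}|F - G|^2|_{r=\rho}\,dV_L \le C_2^2\alpha^2/\rho^2$. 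Homogeneity $\Pi_\sigma^* F = F$ follows from the semigroup identity $\Pi_\sigma^*\Pi_\tau^* G = \Pi_{\sigma\tau}^* G$ (using composition of parallel transport along the same flow line) and uniqueness of limits, while $F \in H^1_{\text{loc}}$ is obtained from the same argument applied to the full gradient using the hypothesis $\int r^{-m}|\nabla G|^2 d\mu_g \le \alpha^2 R^{-2}$.

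The quantitative estimate then follows by integrating the slicewise bound against the measure $r^{-m}d\mu_g \asymp \rho^{-1}\,d\rho\,dV_L$ on $E_R$, with the comparability absorbed from \eqref{L2conetog}:
\[
\int_{E_R} r^{-m}|F - G|^2\,d\mu_g \le C_3\alpha^2\int_R^\infty \rho^{-3}\,d\rho = \frac{C_3\alpha^2}{2R^2},
\]
which is bounded by $16\alpha^2 R^{-2}$ after careful tracking of the constants arising from \eqref{radderiv}, the link-measure comparison, and the dyadic summation. The principal obstacle is the interplay of FTC, covariant differentiation, and parallel transport in the bundle setting: one must verify that under a metric connection, $\frac{d}{dt}[P_{c,t}(V(c(t)))] = P_{c,t}(\nabla_{\dot c}V)$, so that fiberwise norm estimates reduce to the scalar case, and one must express the flow pullback and its semigroup property entirely in terms of parallel transport in order to deduce the homogeneity of $F$. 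Bernstein's remaining analytic ingredients (dyadic Cauchy--Schwarz, coarea, swapping integration order) do not see the bundle structure and apply verbatim.
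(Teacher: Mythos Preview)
Your proof is correct and shares the paper's overall architecture---fundamental theorem of calculus along $\X$-flow lines, with parallel transport replacing pointwise evaluation so that the identity $\frac{d}{dt}P_{c,t}(G(c(t)))=P_{c,t}(\nabla_{\dot c}G)$ reduces the bundle problem to the scalar one, followed by an $L^2$ Cauchy estimate and the semigroup property $\Pi_\sigma^*\Pi_\tau^*=\Pi_{\sigma\tau}^*$ for homogeneity.  The analytic step differs, however.  The paper integrates over a fixed compact annulus $K\subset\Sigma$ against $d\mu_C$ and applies Cauchy--Schwarz in a single stroke, splitting $\int_R^{R'}\tfrac{1}{s}\Pi_s^*(\nabla_\X G)\,ds$ as $\bigl(\int s^{-2}\,ds\bigr)^{1/2}\bigl(\int|\Pi_s^*\nabla_\X G|^2\,ds\bigr)^{1/2}$; then Fubini and the hypothesis give the $R^{-2}$ decay directly, with no dyadic decomposition.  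Your route---Minkowski over the link followed by dyadic Cauchy--Schwarz in $t$---is a legitimate alternative that yields the slightly sharper slicewise bound $\int_{L(\Sigma)}|F-G|^2\lesssim\alpha^2\rho^{-2}$ before integrating in $\rho$, at the cost of a more elaborate bookkeeping of constants.  Both approaches defer the $H^1_{\mathrm{loc}}$ regularity of $F$ and the precise constant $16$ to Bernstein's scalar argument.
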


\begin{proof}
Let $G_\tau = \Pi_\tau^* G$, and take the covariant derivative at $p$ with respect to the velocity of the flow line $\Pi_\tau(p)$, parametrized by $\tau$. Since the flow $\Pi_\tau$ is the $\log \tau$ flow of the vector field $\mathbf{X}$, we have the following equality.
\begin{align*}
\frac{d}{d\tau} G_\tau (p) &= \frac{d}{dh}\bigg|_{h=0} G_{\tau+h} (p) \\
&= \lim_{h \rightarrow 0} \frac{P_{\Pi_{1 + t/\tau}(\Pi_\tau(p)), h} (G_\tau(\Pi_{1+\frac{h}{\tau}}(p))) - G_\tau(p)}{h} \\
&= \nabla_{\frac{1}{\tau}\mathbf{X}} G_\tau(p).
\end{align*}
We show that covariant differentiation by $\frac{1}{\tau} \mathbf{X}$ commutes with pullback via the flow diffeomorphism $\Pi_\tau$.
\begin{align*}\nabla_{\frac{1}{\tau}\mathbf{X}} G_\tau (p) &= \lim_{h \rightarrow 0} \frac{P_{\Pi_{1 + t/\tau}(\Pi_\tau(p)), h} (G_\tau(\Pi_{1+\frac{h}{\tau}}(p))) - G_\tau(p)}{h} \\
&= \lim_{h \rightarrow 0} \frac{P_{\Pi_{1 + t/\tau}(\Pi_\tau(p)), h} (P_{\Pi_t(p),\tau}(G(\Pi_\tau(\Pi_{1+\frac{h}{\tau}}(p))))) - P_{\Pi_t(p),\tau}(G(\Pi_\tau(p)))}{h}\\
&= \lim_{h \rightarrow 0} \frac{P_{\Pi_t(p),\tau}(P_{\Pi_{1 + t/\tau}(\Pi_\tau(p)), h}(G(\Pi_{1+\frac{h}{\tau}}(\Pi_\tau(p)))))) - P_{\Pi_t(p),\tau}(G(\Pi_\tau(p)))}{h} \\
&= P_{\Pi_t (p),\tau} \bigg( \lim_{h\rightarrow 0}\frac{P_{\Pi_{1 + t/\tau}(\Pi_\tau(p)), h}(G(\Pi_{1+\frac{h}{\tau}}(\Pi_\tau(p))))) - G(\Pi_\tau(p))}{h}\bigg) \\
&= P_{\Pi_t (p),\tau} (\nabla_{\frac{1}{\tau}\mathbf{X}} G (\Pi_\tau (p))) = \Pi_\tau^*(\nabla_{\frac{1}{\tau}\mathbf{X}} G(p)) \\
\end{align*}
Let $K = \bar{A}_{\rho_2, \rho_1} \subset \Sigma$ and $R' \ge R \ge 1$. The following application of Cauchy-Schwartz and Fubini's theorem establish that the sequence of pullbacks $G_R$ is Cauchy in $L^2_{loc}(\Sigma; B, \mu_C)$.
\[ \int_K |G_{R'}- G_R|^2 d\mu_C = \int_K \bigg| \int_{R}^{R'} \frac{d}{ds} G_s ds \bigg|^2 d\mu_C = \int_K \bigg| \int_{R}^{R'} \nabla_{\frac{1}{s}\mathbf{X}} G_s ds \bigg|^2 d\mu_C 
\]
\[ = \int_K \bigg| \int_{R}^{R'} \frac{1}{s}\Pi^*_s(\nabla_{\mathbf{X}}G) ds \bigg|^2 d\mu_C \le \bigg(\int_{R}^{R'} \frac{1}{s^2}ds\bigg) \bigg(\int^{R'}_R \int_K |\Pi^*_s(\nabla_{\mathbf{X}}G)|^2 d\mu_C ds\bigg) \] 
\[\le \frac{1}{R} \int_R^{R'} \int_K \Pi_s^* |\nabla_{\mathbf{X}}G|^2 d\mu_C ds\]
Note that in the last inequality, we used the fact that $P_{\Pi_t(p), \tau}$ is an isometry from $B_{\Pi_\tau(p)}$ to $B_p$. By (4.8), there exists $\tau_0(K)$ such that for $s \ge R \ge \tau_0(K)$, $d\mu_C \le 2 \Pi_s^*(r^{-m} d\mu_g)$. Thus,
\[
\int_K |G_{R'}- G_R|^2 d\mu_C \le \frac{2}{R} \int_R^{R'} \int_K \Pi_s^* |\nabla_{\mathbf{X}}G|^2 \Pi_s^*(r^{-m} d\mu_g) ds
\]
\[= \frac{2}{R} \int_R^{R'} \int_{\Pi_s(K)}  |\nabla_{\mathbf{X}}G|^2r^{-m} d\mu_g ds.
\]
Using the bounds on $|\nabla_g r|$ and the hypotheses of the proposition, for large $R \ge R_H$,
\[\int_K |G_{R'}- G_R|^2 d\mu_C   \le \frac{8}{R} \int_R^{R'} \int_{\bar{E}_{\rho_1 s}} r^{2-m} |\nabla_{\partial_r} G|^2 d\mu_g ds\]
\[
\le \frac{8\alpha^2}{\rho_1^2 R} \int_R^{R'} \frac{1}{s^2} ds \le \frac{8\alpha^2}{\rho_1^2 R^2}  
\]
Thus, $G_R$ is Cauchy in $L^2(K; B; d\mu_C)$, and has a unique limit $F_K \in L^2(K; B; d\mu_C)$. Since we can find this limit for the annuli $A_{\rho_2, \rho_1}$ which form a compact exhaustion of $\Sigma$, this shows that there is a limit $F \in L^2_{loc}(\Sigma; B; d\mu_C)$ such that
\[\lim_{R \rightarrow \infty} G_R = F \text{ in } L^2_{loc}(\Sigma; B; d\mu_C) 
\]
We show that $F$ is homogeneous of degree zero. For any $\tau \ge 1$,
\[F_\tau = \Pi^*_\tau F = \Pi^*_\tau \lim_{R \rightarrow \infty} G_R= \lim_{R \rightarrow \infty} G_{\tau R} = F.
\]
Since $\Pi^*_\tau G \xrightarrow{L^2_{loc}} F$ as $\tau \rightarrow \infty$, $G$ is asymptotically homogeneous of degree 0.
\end{proof}

Now that we have established the preceding results, the proof of \cite[Theorem 6.1]{Ber} with the appropriate modifications suffices to prove the following theorem. 

\begin{thm}\label{mainthmharmver} If $V \in C^2_{\m+2,1}(\Sigma; B)$ is a section that satisfies \eqref{almostharmonic} then there are constants $R_0$ and $K_0$, depending on $V$, so that for any $R \ge R_0$
\[
\int_{\bar{E}_R} \bigg( |V|^2 +r^2|\nabla V|^2 + r^4 | \nabla_{\partial_r} V|^2 \bigg) r^{-1-m} \le \frac{K_0}{R^{m}} \int_{S_R} |V|^2
\]
Moreover, $V$ is asymptotically homogeneous of degree $0$ and $\tr_\infty^{0} V = a$ for some section $a \in L^2(L(\Sigma); B|_{L(\Sigma)})$ that satisfies $\alpha^2 = \lim_{\rho \rightarrow \infty} \rho^{1-m} \int_{S_\rho} |V|^2 = \int_{L(\Sigma)} |a|^2$ and, 
\[
\int_{\bar{E}_R} \bigg( |V|^2 + r^2 (|V- A|^2 + |\nabla V|^2) + r^4 | \nabla_{\partial_r} V |^2 \bigg) r^{-2-m}  \le \frac{K_0 \alpha^2}{R^2}.
\]
Here $A \in L^2_{loc}(\Sigma)$ is the leading term of $V$ and $L(\Sigma)$ is the link of the asymptotic cone. 
\end{thm}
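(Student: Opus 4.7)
The plan is to follow the proof of Theorem 6.1 in \cite{Ber} line by line, substituting the vector-valued ingredients established in this subsection (Proposition \ref{poincareineq}, Lemma \ref{dboundarynorm}, Proposition \ref{ddirichletenergy}, the corollary that follows it, the summarized frequency-decay theorem, and Proposition \ref{asymphomog}). The starting point is the summarized theorem: since $V \in C^2_{\mu+2,1}$ satisfies \eqref{almostharmonic}, one has $\hat{N}_\mu(\rho) \to 0$ and in fact $\rho^{2}\hat{N}_\mu(\rho) \to \xi[V] \in [0,\infty)$, so $N(\rho) \le K\rho^{-1}$ for all sufficiently large $\rho$.

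Given this, I would plug $N(\rho) = \rho F(\rho)/B(\rho)$ into Lemma \ref{dboundarynorm} to get the ODE
\[
B'(\rho) = \frac{m-1-2N(\rho)}{\rho}\, B(\rho) + O(\rho^{-3})\,B(\rho).
\]
Integrating this ODE between $R$ and $\rho$ and using the bound on $N$ gives that $\rho^{1-m}B(\rho)$ converges to some constant $\alpha^2 \ge 0$, with a quantitative rate $|\rho^{1-m}B(\rho) - \alpha^2| \le C\alpha^2\rho^{-2}$. The polynomial-weighted estimate for $|V|^2$ then follows from the coarea formula: $\int_{\{r\ge R\}} |V|^2 r^{-1-m} = \int_R^\infty r^{-1-m}B(r)\,dr \le C\alpha^2 R^{-1}$, which is exactly of the form $K_0 R^{-m}\int_{S_R}|V|^2$. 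For the gradient and radial-derivative terms, I would combine the frequency bound (which controls $\rho \hat{D}_\mu(\rho)/\hat{B}_\mu(\rho)$) with the vector-valued Rellich--Ne\v{c}as identity derived inside Proposition \ref{ddirichletenergy} to produce a Pohozaev-type control of $\int_{S_\rho}|\nabla_{\partial_r} V|^2/|\nabla_g r|$ in terms of $B(\rho)/\rho^{2}$; a Caccioppoli inequality on dyadic annuli (where the Gaussian weight $\Phi_\mu$ is comparable to a polynomial relative to its center) passes the bound from the Gaussian-weighted Dirichlet energy to the polynomial-weighted one, yielding the $r^{2}|\nabla V|^2$ and $r^{4}|\nabla_{\partial_r} V|^2$ terms. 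Finally, once $\alpha^2$ is identified, the integral bound $\int_{E_R}(r^{-m}|\nabla V|^2 + r^{2-m}|\nabla_{\partial_r} V|^2) \le \alpha^2 R^{-2}$ holds, and Proposition \ref{asymphomog} applied to $G = V$ produces a leading term $A$ homogeneous of degree zero with $\tr_\infty^0 V = a \in H^1(L(\Sigma);B|_{L(\Sigma)})$ and $\int_{E_R} r^{-m}|V-A|^2 \le 16\alpha^2 R^{-2}$; identifying $\alpha^2 = \int_{L(\Sigma)}|a|^2$ is immediate from the $B(\rho)$ asymptotics and dominated convergence on the link.

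The main obstacle in the adaptation, and the reason a careful rewriting rather than a mechanical substitution is needed, is the passage from the Gaussian-weighted energy (in which the frequency machinery naturally operates) to the polynomial-weighted integrals appearing in the conclusion. In the scalar case of \cite{Ber} this relies on an interplay between the Pohozaev identity and the boundary ODE; in the vector-bundle setting the extra term $g^{ij}\langle \nabla_{\nabla_{\partial_j}\mathbf{X}}V,\nabla_{\partial_i}V\rangle$ in the Rellich--Ne\v{c}as identity must be absorbed into an $O(r^{-2})|\nabla V|^2$ remainder using \eqref{covderivest}, and the integration by parts on the fiber requires one to invoke metric compatibility of $\nabla$ at several steps and to use parallel transport (as in the definition of $\Pi_\tau^* G$) to compare fibers at different base points. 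Beyond these bookkeeping issues, all the arguments are word-for-word those of \cite{Ber}, so once the substitutions are made the remainder of the proof is routine.
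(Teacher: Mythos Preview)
Your proposal is correct and takes essentially the same approach as the paper: the paper itself does not give an independent proof of this theorem but simply states that, having established the vector-valued analogues of the relevant ingredients (the Poincar\'e inequality, the $B'(\rho)$ formula, the Rellich--Ne\v{c}as/Dirichlet-energy computation, the frequency-decay theorem, and the asymptotic-homogeneity proposition), ``the proof of \cite[Theorem 6.1]{Ber} with the appropriate modifications suffices.'' Your sketch is in fact more detailed than what the paper provides, and your identification of the Gaussian-to-polynomial weight passage and the handling of the $g^{ij}\langle \nabla_{\nabla_{\partial_j}\mathbf{X}}V,\nabla_{\partial_i}V\rangle$ term via \eqref{covderivest} are exactly the places where care is needed in the adaptation.
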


In order to finish the proof of Theorem \ref{mainthm4}, we prove that almost eigensections of $\mathcal{L}_\m$ corresponding to an eigenvalue $\lambda$ can be transformed into almost eigensections corresponding to a different eigenvalue $\lambda + \nu$ of another operator in the family, $\mathcal{L}_{\m-4\nu}$. In particular, almost eigensections of $\mathcal{L}_0$ with negative eigenvalues $-\lambda < 0$ can be transformed into almost harmonic sections of the operator $\mathcal{L}_{4\lambda}$. 

Before stating the proposition, we note that when we apply the operator $\mathcal{L}_\m$ to sections of different bundles, we will assume that we are taking all covariant derivatives with respect to the particular section to which we are applying it. In particular, when we apply $\mathcal{L}_\m$ to scalar functions, we will assume that we are applying it to sections of the trivial bundle $\Sigma \times \R$ endowed with the standard product metric. 

\begin{prop}\label{harm2eigen}
There is a constant $M' = M'(M, \m, \nu, n, \Lambda)$ such that if $(\Sigma^m, g, r)$ is an asymptotically conical end with associated constant $\Lambda$, $(B,p,\Sigma, h, \nabla)$ is a vector bundle of rank $n$ over $\Sigma$ with metric $h$, and $V \in C^2(\Sigma; B)$ satisfies
\[
|(\mathcal{L}_\m + \lambda) V| \le M r^{-2} (|V| + |\nabla V|) 
\]
then, $\hat{V} = r^{2\nu}V$ satisfies
\[
|(\mathcal{L}_{\m-4\nu} + \lambda + \nu) \hat{V}| \le M'r^{-2}(\hat{V} + |\nabla \hat{V}|).
\]
\end{prop}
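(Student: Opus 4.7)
The plan is a direct computation: expand $\mathcal{L}_{\mu-4\nu}(r^{2\nu}V)$ using the product rule for the Laplacian and the drift term, group the pieces into $r^{2\nu}\mathcal{L}_\mu V$ plus a single algebraic multiple of $r^{2\nu}V$ plus a manifestly lower-order error, then substitute the almost-eigenvalue hypothesis. The whole proposition is really just saying that conjugation by the power $r^{2\nu}$ intertwines the family $\mathcal{L}_\mu + \lambda$ with the family $\mathcal{L}_{\mu-4\nu}+\lambda+\nu$, up to errors controlled by $r^{-2}(|\hat V|+|\nabla\hat V|)$.

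First I would compute the weights. Set $f=r^{2\nu}$. Using $|\nabla_g r|^2 = 1 + O(r^{-4})$ from \eqref{radderiv} one gets $\partial_r f = 2\nu r^{2\nu-1}+O(r^{2\nu-5})$, and using the Hessian asymptotics \eqref{asymphess} together with $\Delta r = \frac{m-1}{r} + O(r^{-3})$ one gets $\Delta f = 2\nu(2\nu + m - 2)r^{2\nu - 2} + O(r^{2\nu - 4})$. Next I would apply the product rules $\nabla(fV)=(df)\otimes V + f\nabla V$, $\Delta(fV)=(\Delta f)V + 2\nabla_{\nabla f}V + f\Delta V$, and $\nabla_{\partial_r}(fV) = (\partial_r f)V + f\nabla_{\partial_r}V$, noting that the cross term satisfies $2\nabla_{\nabla f}V = 4\nu r^{2\nu-1}\nabla_{\partial_r}V = \tfrac{4\nu}{r}f\nabla_{\partial_r}V$. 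Collecting pure $V$-terms gives
\[
\bigl[\Delta f - \tfrac{r}{2}\partial_r f + \tfrac{\mu - 4\nu}{r}\partial_r f\bigr]V = -\nu r^{2\nu}V + O(r^{2\nu - 2})V,
\]
because the $r^{2\nu}$ coefficient reduces to $-\nu$ and all remaining polynomial-in-$r^{-1}$ contributions are absorbed into the $O(r^{2\nu - 2})$ error. The cross term exactly cancels the $-4\nu/r$ shift in the drift, so the $\nabla_{\partial_r}V$ contributions assemble into $-\tfrac{r}{2} f\nabla_{\partial_r}V + \tfrac{\mu}{r} f\nabla_{\partial_r}V$, i.e. the drift of $\mathcal{L}_\mu$. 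The conclusion of this step is the clean identity
\[
\mathcal{L}_{\mu - 4\nu}(r^{2\nu}V) = r^{2\nu}\mathcal{L}_\mu V - \nu\, r^{2\nu}V + E,\qquad |E| \le C r^{-2}|\hat V|,
\]
with $C = C(\mu, \nu, \Lambda)$.

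Rearranging gives $(\mathcal{L}_{\mu - 4\nu} + \lambda + \nu)\hat V = r^{2\nu}(\mathcal{L}_\mu + \lambda)V + E$. Applying the hypothesis on $V$ to the first term yields a bound by $Mr^{-2}(|\hat V| + r^{2\nu}|\nabla V|)$. To finish I must replace $r^{2\nu}|\nabla V|$ by $|\nabla\hat V|$: from $\nabla\hat V = (df)\otimes V + f\nabla V$ and $|df| \le C r^{2\nu - 1}$ we get
\[
r^{2\nu}|\nabla V| \le |\nabla\hat V| + |df|\,|V| \le |\nabla \hat V| + C r^{-1}|\hat V|.
\]
Combining with the error $E$, the constants collapse to a single $M' = M'(M,\mu,\nu,n,\Lambda)$ and one obtains $|(\mathcal{L}_{\mu-4\nu} + \lambda + \nu)\hat V| \le M' r^{-2}(|\hat V| + |\nabla \hat V|)$, as required.

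I do not expect a serious obstacle, only bookkeeping: the only mildly delicate moment is checking that the $\tfrac{4\nu}{r}$ produced by the cross term $2\nabla_{\nabla f}V$ combines correctly with the drift coefficient $\tfrac{\mu - 4\nu}{r}$ to restore the drift of $\mathcal{L}_\mu$, and that the $-\tfrac{r}{2}(\partial_r f)V$ term produces the eigenvalue shift $-\nu r^{2\nu}V$ while all subleading terms fit under the $O(r^{2\nu-2})$ umbrella. Everything else is uniform in $V$ because only the weak-conicality constants $\Lambda, \mu, \nu, n$ enter, and no constants depend on $\lambda$ beyond its appearance on both sides of the intertwining identity.
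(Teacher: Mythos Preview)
Your proposal is correct and follows essentially the same approach as the paper: a direct product-rule expansion of $\mathcal{L}_{\mu-4\nu}(r^{2\nu}V)$, identification of the intertwining identity $\mathcal{L}_{\mu-4\nu}\hat V = r^{2\nu}\mathcal{L}_\mu V - \nu\hat V + O(r^{-2})\hat V$, and the same estimate $r^{2\nu}|\nabla V|\le |\nabla\hat V|+Cr^{-1}|\hat V|$ to close. The only cosmetic difference is that the paper first derives the identity for a generic $\mathcal{L}_{\mu'}$ in the form $\mathcal{L}_{\mu'}\hat V = r^{2\nu}\mathcal{L}_{\mu'+4\nu}V + (\mathcal{L}_{\mu'}r^{2\nu})V$ and then computes the scalar $\mathcal{L}_{\mu'}r^{2\nu}$ separately, whereas you compute $\Delta f$ and the drift of $f$ directly and specialize to $\mu'=\mu-4\nu$ from the outset.
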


\begin{proof}
We calculate directly, for arbitrary $\m' \in \Z$ 
\begin{align*}
\mathcal{L}_{\m'} \hat{V} &= \mathcal{L}_{\m'}(r^{2\nu} V) \\
&= \Delta (r^{2\nu}V) + \bigg( \frac{\m'}{r} - \frac{r}{2} \bigg) \nabla_{\partial_r}(r^{2\nu} V) \\
&= \divg( \nabla(r^{2\nu} V)) + \bigg( \frac{\m'}{r} - \frac{r}{2} \bigg) ((\partial_r r^{2\nu}) V) + r^{2\nu} \nabla_{\partial_r} V) \\
&= \divg( 2\nu r^{2\nu-1} dr \otimes V + r^{2\nu} \nabla V) + \bigg( \frac{\m'}{r} - \frac{r}{2} \bigg) ((\partial_r r^{2\nu}) V) + r^{2\nu} \nabla_{\partial_r} V) \\
&= 2\nu r^{2\nu-1} \tr(dr \otimes \nabla V) + (\Delta r^{2\nu})V + 2\nu r^{2\nu-1} \tr(\nabla r \otimes V) + r^{2\nu} \Delta V \\ 
& \;\;\;\;\;\;\;\;\;\;\;\;\;\;\;\;+ \bigg( \frac{\m'}{r} - \frac{r}{2} \bigg) ((\partial_r r^{2\nu}) V) + r^{2\nu} \nabla_{\partial_r} V) \\
&= r^{2\nu} \mathcal{L}_{\m'} V + (\mathcal{L}_{\m'} r^{2\nu}) V + 4\nu r^{2\nu-1} \tr(dr \otimes \nabla V) \\
&= r^{2\nu} \mathcal{L}_{\m'} V + (\mathcal{L}_{\m'} r^{2\nu}) V + 4\nu r^{2\nu-1} \nabla_{\partial_r} V \\
&= r^{2\nu} \mathcal{L}_{\m'} V + (\mathcal{L}_{\m'} r^{2\nu}) V  + r^{2\nu} \frac{4\nu}{r} \nabla_{\partial_r} V\\
&= r^{2\nu} \mathcal{L}_{\m' + 4\nu}V + (\mathcal{L}_{\m'} r^{2\nu}) V.
\end{align*}
Note that a simple way to evaluate the trace $\tr(dr \otimes \nabla V)$ is to calculate in a coordinate system $(r, \mathbf{x})$ around $p \in \Sigma$, where $\mathbf{x}$ is a local coordinate system on $S_{r(p)}$. Finally, we observe that 
\begin{align*}
\mathcal{L}_{\m'} r^{2\nu} &=  -\nu r^{2\nu} + O(r^{-2+2\nu};\nu, \m').
\end{align*}
Plug this into the previous calculation to obtain
\[\mathcal{L}_{\m'} \hat{V} = r^{2\nu} \mathcal{L}_{\m' + 4\nu}V - \nu \hat{V} + \hat{V}O(r^{-2};\nu, \m').
\]
Finally, observe that $\nabla \hat{V} = r^{2\nu} \nabla V + 2\nu r^{2\nu -1} dr \otimes V$, and so
\[
r^{2\nu} |\nabla V| \le |\nabla \hat{V}| + \frac{4|\nu|}{r}r^{2\nu}|V| = |\nabla \hat{V}| + \frac{4|\nu|}{r}|\hat{V}|
\]
Thus,
\begin{align*}
|(\mathcal{L}_{\m-4\nu} + \lambda + \nu) \hat{V} | &= |r^{2\nu} \mathcal{L}_\m V - \nu \hat{V} + \hat{V}O(r^{-2};\nu, \m') + r^{2\nu} \lambda V + \nu \hat{V}|\\
&= r^{2\nu}|(\mathcal{L}_\m + \lambda)V| + |\hat{V}|O(r^{-2}; \nu, \m)\\
&\le Mr^{-2} (r^{2\nu}|V| + r^{2\nu}|\nabla V|) + |\hat{V}|O(r^{-2}; \nu, \m) \\
&\le M r^{-2} (|\hat{V}| + |\nabla \hat{V}| + \frac{4|\nu|}{r}|\hat{V}| ) + |\hat{V}|O(r^{-2}; \nu, \m) \\
& \le M'r^{-2}  (|\hat{V}| + |\nabla \hat{V}|)
\end{align*}
This concludes the proof of the proposition.
\end{proof}

\begin{proof}[Proof of Theorem \ref{mainthm4}] Theorem \ref{mainthm4} is an immediate consequence of Theorem \ref{mainthmharmver} and Proposition \ref{harm2eigen}. 
\end{proof}

\section{Unique Continuation on Self-Similar Ends}

We begin by proving that the ends of asymptotically conical high codimension self-shrinkers are weakly conical ends. We follow the method of proof in \cite[Lemma 8.1]{Ber} almost exactly, with a few modifications to extend the proof to the case of high codimension.

\begin{lem}\label{shrinkerconend}
Let $F:M^m\rightarrow \R^{n+m}$ be an asymptotically conical self-shrinker or self-expander. There is a radius $R_F$ so that if $g$ is the metric pulled back from $\R^{n+m}$ by $F$ to $M_{R_F}$ and $r(p) = |F(p)|$, then $(M_{R_F}, g, r)$ is a weakly conical end. 
\end{lem}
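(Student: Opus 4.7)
The plan is to verify the two defining inequalities of a weakly conical end directly from the shrinker/expander equation and the curvature estimate of Lemma \ref{curvasymp}, by expressing $\nabla_g r$ and $\nabla_g^2 r^2$ in terms of extrinsic geometric quantities of the immersion $F$.

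First I would compute $\nabla_g r$. Since $r = |F|$, the Euclidean gradient of $|x|$ restricted to $F(M)$ is $F/|F|$, and its tangential projection gives $\nabla_g r = F^T/|F|$. Hence
\[
|\nabla_g r|^2 = 1 - \frac{|F^{\perp}|^2}{|F|^2}.
\]
By the self-shrinker equation $H = -F^\perp/2$ (or the expander analogue $H = F^\perp/2$), we have $|F^\perp| = 2|H|$, and Lemma \ref{curvasymp} yields $|H| \le \sqrt{m}\,|A| \le C_1\sqrt{m}\,|F|^{-1}$. Thus $||\nabla_g r|^2 - 1| \le 4m C_1^2 r^{-4}$, and since $|\nabla_g r| + 1 \ge 1$ for large $r$, this upgrades to $||\nabla_g r| - 1| \le \Lambda_1 r^{-4}$ for $\Lambda_1 = 4mC_1^2$.

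Next I would compute the Hessian of $r^2$. Using an orthonormal tangent frame $\{e_i\}$ at $F(p)$, together with the Gauss formula $D_{e_i} e_j = \nabla^M_{e_i} e_j + A(e_i, e_j)$, a direct computation gives
\[
\nabla_g^2 r^2 (e_i, e_j) = 2\delta_{ij} + 2\langle F^{\perp}, A(e_i, e_j)\rangle,
\]
so $\nabla_g^2 r^2 - 2g = 2\langle F^\perp, A\rangle$ as a symmetric 2-tensor on $M$. Applying $|F^\perp| = 2|H|$ and the bounds $|H|, |A| \le C_1 r^{-1}$ from Lemma \ref{curvasymp} once more, we obtain $|\nabla_g^2 r^2 - 2g| \le 4C_1^2 r^{-2}$, giving the second inequality with $\Lambda_2 = 4 C_1^2$.

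Finally, taking $\Lambda = \max(\Lambda_1, \Lambda_2)$ and choosing $R_F \ge \max(R_1, (2\Lambda)^{1/2})$ large enough that both bounds are at most $1/2$ on $M_{R_F}$, the triple $(M_{R_F}, g, r)$ satisfies the definition of a weakly conical end. The argument is identical for expanders because only $|F^\perp| = 2|H|$ (which holds for both) and the curvature decay of Lemma \ref{curvasymp} (which also applies to expanders by the comment preceding Corollary \ref{xpanderlinearization}) are used. The only subtlety is the identity $\nabla_g^2 r^2 - 2g = 2\langle F^\perp, A\rangle$, which I expect to be the main technical step; it is routine but requires care in distinguishing the ambient flat connection, the Levi-Civita connection on $M$, and the normal bundle connection.
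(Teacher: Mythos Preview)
Your proposal is correct and follows essentially the same route as the paper: both compute $\nabla_g r = F^T/|F|$ and use $|F^\perp| = 2|H| \le C r^{-1}$ from the soliton equation and Lemma~\ref{curvasymp} to control $||\nabla_g r|-1|$, and both derive the key identity $\nabla_g^2 r^2 - 2g = 2\langle F^\perp, A\rangle$ via the Gauss formula to bound the Hessian term. The only cosmetic differences are that the paper factors $||\nabla_g r|-1|$ via a difference of squares rather than squaring first, and your constant tracking is slightly looser, but the argument is the same.
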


\begin{proof}
Lemma \ref{curvasymp} implies that if $R_F > R_1$, the second fundamental form $A$ of $M_{R_F}$ satisfies
\[
|A(p)| \le C_1 |F(p)|^{-1} = C_1 r(p)^{-1}. 
\]
The self-shrinker/self-expander equation implies that there is $\tilde{C}>C_1$ such that 
\[
|F^N(p)| = 2|\vec{H}(p)| \le C|A(p)| \le \tilde{C} r(p)^{-1} < \frac{1}{2}.
\]
Set $R_F$ large enough that $\tilde{C} R_F^{-1} + \tilde{C}^2 R_F^{-2} < \frac{1}{4}$. Since $|F^N(p)|<1/2$ for $|F(p)|> R_F$, 
\[
|F^T(p)| \ge 1/2|F(p)|
\]
for $p \in M_{R_F}$. We calculate
\[
||\nabla_g r(p)| - 1| = \bigg| \frac{|F^T(p)|}{|F(p)|} - 1 \bigg| = \frac{|F(p)|^2 - |F^T(p)|^2}{|F(p)|(|F^T(p)| + |F(p)|)} \le \frac{|F^N(p)|^2}{\frac{3}{2}|F(p)|^2} \le \tilde{C} r^{-4} < \frac{1}{2}
\]
Note that we use the self-shrinker/self-expander equation in the second to last inequality. Next, we confirm the estimate for the Hessian $\nabla^2_g r^2$. Observe that $\nabla r^2 = 2r \nabla r = 2 F^T$. Let $X, Y$ be two tangential vector fields with respect to the shrinker/expander $F$.
\begin{align*}
\nabla^2_g r^2 (X,Y) &= \nabla^g_X (\nabla_Y r^2) - \nabla^g_{\nabla_XY}r^2\\
&= \nabla^g_X g(Y, 2F^T) - g(\nabla^g_XY, 2F^T)\\
&= \nabla_X \langle Y, 2 F^T \rangle_{\R^{n+m}} - \langle \nabla^g_X Y , 2F^T \rangle_{\R^{n+m}} \\
&= \nabla_X \langle Y, 2 F \rangle_{\R^{n+m}} - \langle \nabla^g_X Y , 2F \rangle_{\R^{n+m}} \\
&= 2 \langle Y, \nabla_X F \rangle_{\R^{n+m}} +  \langle \nabla_X Y, 2 F \rangle_{\R^{n+m}} - \langle \nabla^g_X Y , 2F \rangle_{\R^{n+m}} \\
&= 2 \langle Y, \nabla_X F \rangle_{\R^{n+m}} +  \langle \nabla_X Y, - \nabla^g_X Y , 2F \rangle_{\R^{n+m}} \\
&= 2 g(X,Y) + 2 \langle A(X,Y), F \rangle_{\R^{n+m}}
\end{align*}
Thus,
\[
|\nabla^2_g r^2 - 2g| \le 2|F^N||A| \le \tilde{C}^2r^{-2} < \frac{1}{2}.
\]
We conclude that that $(M_{R_F},g,r)$ satisfies all the properties of a weakly conical end.
\end{proof}

\begin{lem}\label{lem-norm-bund-curv-decay}
The connection $\nabla^\perp$ on the normal bundle $NM_{1,R_4}$ satisfies Condition \ref{cond-curvature-decay}.
\end{lem}

\begin{proof}
Consider a point $x_0$ on $M_{1,R_4}$ and let $z_0$ be the nearest point to $x_0$ on the cone $C$. By Lemma \ref{graphest}, a neighborhood of $x_0$ in $M_{1,R_4}$ may be represented as the the graph of a function $u(x,1)$ defined on a disc $D^m_\rho$ centered at $z_0$ in the tangent space $T_{z_0} C$. Using the notation of Lemma \ref{graphest}, the function $u(x,0)$ represents a neighborhood of of $z_0$ in $C$ on the same disc $D^m_\rho$. By Corollary \ref{sepdecay}, the differences $|D^iu(z_0,1) - D^iu(z_0,0)| = O(|F_1(x_0)|^{-i-1})$. In particular, the equivalence of the Hessian of $u$ and the second fundamental form (c.f. Lemma \ref{HessBound}) implies that 
\begin{equation}\label{eq-difference-2nd-FF}
|A_C(p_*X,p_*Y) - A(X,Y)| \le O(|X||Y||F_1(x_0)|^{-3})
\end{equation}
when evaluated at a point $x_0$. Here the $A$ without subscript denotes the second fundamental form $A_{M_{1,R_4}}$ of $M_{1,R_4}$, and $X,Y$ are identified with $p_*X, p_*Y$ by pushforward along the projection map $p$ to the cone $C$. 

Given an $m$-dimensional cone $C \subset \R^{m+n}$, consider the radial vector field $\mathbf{x} = r\partial_r$ restricted to $C$. Let $Y$ be an arbitrary tangent vector field on $C$. We calculate the second fundamental form evaluated at $\mathbf{x}$ and $Y$. Let $\mathbf{x}$ denote the extension of the radial/coordinate vector field $x^\alpha \partial_\alpha$ to $\R^{n+m}$.
\begin{align*}
    A_C(Y, \mathbf{x}) &= (D^{\R^{m+n}}_Y \mathbf{x})^\perp \\
    &= (Y^\alpha \partial_\alpha (x^\beta \partial_\beta))^\perp \\
    &= (Y^\alpha \delta_\alpha^\beta \partial_\beta)^\perp \\
    &= (Y^\alpha \partial_\alpha)^\perp\\
    &= 0,
\end{align*}
since we assumed that $Y$ was tangential. 

Since $A_C(Y, \mathbf{x}) = 0$, it follows that $|A((p^{-1})_*Y, (p^{-1})_*\mathbf{x})| = O(|Y||F_1(x_0)|^{-2})$ by \eqref{eq-difference-2nd-FF}. We estimate the difference $|(p^{-1})_*\mathbf{x}(z_0) - F_1(x_0)^T|$, where we take the tangential part of $F_1(x_0)$ with respect to $TM_{1,R_4}$. Corollary \ref{sepdecay} implies that 
\[|C(z_0) - F_1(x_0)| , \big\langle \tfrac{\mathbf{x}(z_0)}{|\mathbf{x}(z_0)|}, \tfrac{\mathbf{x}(x_0)}{|\mathbf{x}(x_0)|} \big \rangle, \text{ and } \langle T_{z_0}C, T_{x_0} M_{1,R_4} \rangle = O(|F_1(x_0)|^{-1}).
\]
Thus, 
\begin{align*}
|A(Y, F_1(x_0)^T)| &\le |A(Y, (p^{-1})_*\mathbf{x})| + |A(Y, F_1(x_0)^T - (p^{-1})_*\mathbf{x})| \\
&\le |A(Y, (p^{-1})_*\mathbf{x})| + |A||Y| |F_1(x_0)^T - (p^{-1})_*\mathbf{x}| \\
& \le O(|Y||F_1(x_0)|^{-2}),
\end{align*}
where we have used the facts that $|A| = O(|F_1(x_0)|^{-1})$ (by Lemma \ref{curvasymp}) and that $|p_* Y| \simeq |Y|$ (by Lemma \ref{graphest}). 

Finally, we recall Ricci's equation \cite[(9)]{mcfhigher} applied to the ambient space $N = \R^{n+m}$. If $R^\perp$ is the curvature tensor of the connection $\nabla^\perp$ on the normal bundle $NM_{1,R_4}$, $X,Y \in \Gamma(TM_{1,R_4})$ and $W \in \Gamma(NM_{1,R_4}) $
\begin{equation}\label{eq-Ricci-equation}
    R^\perp (X,Y) W = - \sum_{i=1}^m \big( \langle W, A(X,e_i) \rangle A(Y,e_i) - \langle W, A(Y,e_i) \rangle A(X,e_i) \big),
\end{equation}
where $e_i$ is an arbitrary orthonormal frame. Setting $X = F_1(x_0)^T$ and using Lemma \ref{curvasymp} and the fact that $|A(Y, F_1(x_0)^T)| = O(|Y||F_1(x_0)|^{-2})$, we obtain
\begin{equation}
    |R^\perp (F_1(x_0)^T,Y) W| = O(|W||Y||F_1(x_0)|^{-3}),
\end{equation}
which is Condition \ref{cond-curvature-decay}.
\end{proof}

\begin{lem}\label{Vvanish} If a self-shrinking end $M_{2,K}$ can be represented as a normal section $V$ over a self-shrinking end $M_{1,R_4}$ which satisfies both the decay estimates \eqref{Vdecay} and 
\[\bigg|\bigg(\mathcal{L}_0 + \frac{1}{2}\bigg) V(p) \bigg| \le  M|F_1(p)|^{-2}(|V| + |\nabla^\perp V|), 
\]
then $V$ is identically 0 and $F_1$ and $F_2$ coincide.
\end{lem}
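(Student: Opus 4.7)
The plan is to regard $V$ as an almost-eigensection of the drift Laplacian over a weakly conical end, and then invoke Theorem \ref{mainthm4}. By Lemma \ref{shrinkerconend}, $(M_{1,R_4}, g, r)$ with $r = |F_1|$ is a weakly conical end, and the normal bundle $NM_{1,R_4}$ equipped with its induced bundle metric and the normal connection $\nabla^\perp$ is a metric vector bundle over it. The self-shrinker identity $\nabla_g r^2 = 2 F_1^T$ gives $F_1^T = r\nabla_g r$, so the operator $\mathcal{L}_0 = \Delta^\perp - \tfrac{1}{2}\nabla^\perp_{F_1^T}$ of Lemma \ref{linearization} coincides with the $\mu=0$ drift Laplacian of Section 4. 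The hypothesized inequality then exhibits $V$ as an almost-eigensection of $\mathcal{L}_0$ with eigenvalue $\lambda = \tfrac{1}{2}$.

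The integrability hypothesis of Theorem \ref{mainthm4} is immediate from the polynomial bounds $|V|\le C_3 r^{-1}$ and $|\nabla^\perp V|\le C_3 r^{-2}$ of Lemma \ref{secasymp} combined with the Gaussian weight $e^{-r^2/4}$. Theorem \ref{mainthm4} then yields a trace at infinity $a \in H^1(L(M_{1,R_4}); NM_{1,R_4}|_{L})$ of degree $2\lambda = 1$, with
\[
\alpha^2 = \lim_{\rho \to \infty} \rho^{\,1-m-4\lambda} \int_{\{r=\rho\}} |V|^2 = \int_{L(M_{1,R_4})} |a|^2.
\]
With $\lambda = \tfrac{1}{2}$ the exponent of $\rho$ is $-m-1$, and the level set $\{r=\rho\}$ on an asymptotically conical end has $(m-1)$-volume of order $\rho^{m-1}$ by \eqref{asympcurv}. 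Combining this with $|V|^2 \le C_3^2 r^{-2}$ yields
\[
\rho^{-m-1}\int_{\{r=\rho\}}|V|^2 \;\le\; C\,\rho^{-m-1}\cdot \rho^{-2}\cdot \rho^{m-1} \;=\; C\rho^{-4} \;\longrightarrow\; 0,
\]
so $\alpha = 0$ and the trace $a$ vanishes.

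Feeding $\alpha = 0$ into the final quantitative estimate of Theorem \ref{mainthm4} forces $\int_{\{r \ge R\}} |V|^2\, r^{-2-m-4\lambda} = 0$, and hence $V \equiv 0$ on $\{r \ge R\}$ for some $R \ge R_0$. The equation $(\mathcal{L}_0 + \tfrac{1}{2})V + Q(x,V,\nabla^\perp V) = 0$ of Lemma \ref{linearization} is a linear elliptic system on sections of a smooth bundle whose first-order perturbation $Q$ obeys $|Q|\le C_4 r^{-2}(|V|+r^{-1}|\nabla^\perp V|)$, so the hypotheses of Aronszajn's strong unique continuation theorem apply; vanishing of $V$ on the open set $\{r > R\}$ therefore propagates to $V \equiv 0$ throughout the connected end $M_{1,R_4}$. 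This is precisely the statement that $F_2$ coincides with $F_1$ on this end, and hence globally after reparametrization. The principal obstacle I anticipate is establishing $\alpha = 0$: Theorem \ref{mainthm4} in isolation only predicts degree-$2\lambda = 1$ homogeneity of $V$ at infinity, and ruling out a nontrivial leading term relies crucially on the sharp pointwise decay $|V| = O(r^{-1})$ furnished by Lemma \ref{secasymp}, which beats the predicted homogeneity rate by a full power of $r$.
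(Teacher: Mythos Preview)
Your proposal is correct and follows essentially the same route as the paper: identify $V$ as an almost-eigensection on a weakly conical end via Lemma \ref{shrinkerconend}, apply Theorem \ref{mainthm4}, and use the $|V|=O(r^{-1})$ decay to kill the degree-$1$ trace at infinity, forcing $V\equiv 0$. The only cosmetic differences are that the paper shows the leading term vanishes by directly computing the $L^2_{loc}$ limit of $\Pi_\tau^*(r^{-1}V)$ rather than the boundary formula for $\alpha^2$, and it extends $V\equiv 0$ from $\bar{E}_{R_0}$ to the full end by appealing to analyticity of self-shrinkers instead of Aronszajn unique continuation.
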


\begin{proof} By Lemma \ref{shrinkerconend}, $F_1$ is a weakly conical end and $V$ is an almost eigensection of $\mathcal{L}_0$ with eigenvalue $1/2$. We may apply Theorem \ref{mainthm4} to the almost eigensection $V$ and obtain that $V$ is asymptotically homogeneous of degree $2\lambda =1$, that $\tr^1_\infty V = a \in L^{2}(L(\Sigma); B|_{L(\Sigma)})$, and that
\begin{equation}\label{VL2bound}
\int_{\bar{E}_{R_0}} |V|^2 r^{-4-m} \le \frac{K_0}{R_0^2} ||a||^2_{L^2(L(\Sigma); B|_{L(\Sigma)})}.
\end{equation}
We claim that the leading term $A$ of $V$ is equal to 0. We test this by taking the $L^2_{loc}$ limit
\begin{align*}
\lim_{\tau \rightarrow \infty} \int_K |\Pi_\tau^* (r^{-1}V)|^2 d\mu_C &=  \lim_{\tau \rightarrow \infty} \int_{K} |(\tau r(p))^{-1} P_{\Pi_t(p), \tau} V(\Pi_\tau(p))|^2 d\mu_C \\
&= \lim_{\tau \rightarrow \infty} \int_K (\tau r(p))^{-2}|V(\Pi_\tau(p))|^2 d\mu_C\\
&\le \lim_{\tau \rightarrow \infty} C_3 \int_K (\tau r(p))^{-4} d\mu_C \\
& \le \lim_{\tau \rightarrow \infty} C_4 \tau^{-4} \mu_C(K) = 0.
\end{align*}
The first inequality comes from the bound on $V$ in Lemma \ref{secasymp}. Thus, the leading term $A$ is equal to 0. This implies that the trace at infinity of $V$ vanishes as well.
\[
a = \tr^1_\infty(V) = \tr^1(A) = 0.
\]
Plugging into inequality \eqref{VL2bound} implies that $V$ is uniformly equal to $0$ in the annular region $\bar{E}_{R_0}$. That $F_1$ and $F_2$ coincide in their region of definition follows from the analyticity of self-shrinkers. 
\end{proof}

\begin{cor}\label{Vvanishxpand} If a self-expanding end $M_{2,K}$ can be represented as a normal section $V$ over a self-expanding end $M_{1,R_4}$ which satisfies the estimates \eqref{Vdecay}, \eqref{Hausdorffdecay}, and 
\[\bigg|\bigg(\mathcal{L}_0^+ - \frac{1}{2}\bigg) V(p) \bigg| \le  M|F_1(p)|^{-2}(|V| + |F_1(p)|^{-1}|\nabla^\perp V|), 
\]
then $V$ is identically 0 and $F_1$ and $F_2$ coincide.
\end{cor}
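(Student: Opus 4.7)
The plan is to reduce the expander problem to the shrinker setup by conjugating away the wrong-sign drift. Corollary \ref{xpanderlinearization} shows that $V$ is an almost eigensection of the operator $\mathcal{L}_0^+ = \Delta^\perp + \tfrac{1}{2}\nabla^\perp_{F_1^T}$, whereas Theorem \ref{mainthm4} is stated for the shrinker drift Laplacian $\mathcal{L}_0 = \Delta^\perp - \tfrac{1}{2}\nabla^\perp_{F_1^T}$. The drift sign is opposite; the natural weighted space for the expander is $L^2(e^{r^2/4})$, and the threshold rate in \eqref{Hausdorffdecay} is exactly what makes the required weighted integrals finite.

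I would introduce the substitution $W := e^{|F_1|^2/4}V$, a smooth section of the same normal bundle. Using $\nabla(r^2/4) = F_1^T/2$, together with $\divg F_1^T = m + O(r^{-2})$ and $|\nabla_g r| = 1 + O(r^{-4})$ from Lemma \ref{shrinkerconend}, a direct computation gives the conjugation identity
\[
\mathcal{L}_0^+\bigl(e^{-r^2/4}W\bigr) = e^{-r^2/4}\bigl(\mathcal{L}_0 W - \tfrac{m}{2}W\bigr) + O\bigl(r^{-2}e^{-r^2/4}|W|\bigr).
\]
Substituting the expander equation $(\mathcal{L}_0^+ - \tfrac{1}{2})V + Q = 0$ and using $e^{r^2/4}|\nabla V| \le |\nabla W| + \tfrac{r}{2}|W|$ so that $e^{r^2/4}|Q|$ retains the structural form of an almost-eigensection remainder in $W$, I obtain
\[
\bigl|(\mathcal{L}_0 + \lambda_W)W\bigr| \le M' r^{-2}\bigl(|W| + |\nabla W|\bigr), \qquad \lambda_W = -\tfrac{m+1}{2}.
\]

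Next, apply Theorem \ref{mainthm4} to $W$. With this $\lambda_W$ the integrability weight is $r^{2m+4}e^{-r^2/4}$, and $|W|^2 r^{2m+4}e^{-r^2/4} = |V|^2 r^{2m+4}e^{r^2/4} = o(r^2 e^{-r^2/4})$ by \eqref{Hausdorffdecay}, which is integrable on $E_R$. The theorem then shows $W$ is asymptotically homogeneous of degree $2\lambda_W = -(m+1)$ with trace at infinity $a$ and $\alpha^2 = \lim_{\rho \to \infty}\rho^{m+3}\int_{S_\rho}|W|^2$. Since $|W|^2 = e^{r^2/2}|V|^2$ and $\operatorname{Vol}(S_\rho) \sim \rho^{m-1}$, the pointwise decay hypothesis \eqref{Hausdorffdecay} yields $\rho^{m+3}\int_{S_\rho}|W|^2 \to 0$, so $\alpha = 0$ and $a \equiv 0$. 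The second estimate of Theorem \ref{mainthm4} then forces $W \equiv 0$ on $E_{R_0}$ for large $R_0$; hence $V \equiv 0$ there, and analyticity of self-expanders propagates this identity to the whole domain, proving $F_1$ and $F_2$ coincide up to reparametrization.

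The main obstacle I anticipate is controlling $|\nabla W|^2$ in the weighted integral. Hypothesis \eqref{Hausdorffdecay} gives only pointwise decay of $|V|$, and Lemma \ref{secasymp} yields only the polynomial bound $|\nabla V| \le C|F_1|^{-2}$, which is inadequate when multiplied by $e^{r^2/2}$. To close this gap I would apply interior elliptic estimates to the linearized expander equation rescaled to balls of radius $c/|F_1|$, on which the drift coefficient $F_1^T/2$ has uniformly bounded rescaled magnitude; this upgrades the $C^0$ decay of $V$ from \eqref{Hausdorffdecay} to matching exponential decay of $|\nabla V|$, securing integrability and finishing the argument.
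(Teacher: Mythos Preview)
Your approach is essentially the same as the paper's: both conjugate $V$ by the Gaussian weight to convert the expander drift Laplacian $\mathcal{L}_0^+$ into the shrinker operator $\mathcal{L}_0$, then invoke Theorem~\ref{mainthm4} and use the Hausdorff decay \eqref{Hausdorffdecay} to force the trace at infinity to vanish. The paper packages the conjugation as Proposition~\ref{plustominus} and Theorem~\ref{xpanderuniq}, using $\hat V=\Psi_{m+1}V=r^{m+1}e^{r^2/4}V$; your $W=e^{r^2/4}V$ differs from $\hat V$ only by the polynomial factor $r^{m+1}$, which is exactly the shift handled by Proposition~\ref{harm2eigen}, so the two substitutions are equivalent. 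The one genuine divergence is how the weighted integrability $\int(|\nabla W|^2+|W|^2)r^{2m+4}e^{-r^2/4}<\infty$ is established: the paper invokes Theorem~\ref{xpanddecay} (Bernstein's energy/integration-by-parts argument, \cite[Theorem 9.1]{Ber}), which needs only the $L^2$ boundary decay $B(\rho)=o(\rho^{m+1})$, whereas you propose interior elliptic estimates on balls of radius $c/r$ to transfer the pointwise Hausdorff bound on $|V|$ to $|\nabla V|$. Your alternative is valid here since \eqref{Hausdorffdecay} does give pointwise control and the rescaled drift is bounded, yielding $|\nabla V|\le Cr\sup_{B_{c/r}}|V|=o(r^{-m}e^{-r^2/4})$; the paper's energy method is a bit more robust in that it avoids the passage from Hausdorff distance to uniform pointwise bounds on $|V|$, but either route closes the argument.
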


\begin{proof}
By Lemma \ref{shrinkerconend}, $F_1$ is a weakly conical end and $V$ is an almost eigensection of $\mathcal{L}_0^+$ with eigenvalue $-1/2$. We may apply Theorem \ref{xpanderuniq} to the almost eigensection $V$ and obtain that $\hat{V} = \Psi_{m+1}V$ is asymptotically homogeneous of degree $0$, that $\tr^0_\infty \hat{V} = \hat{a} \in L^{2}(L(\Sigma); B|_{L(\Sigma)})$, and that
\begin{equation}\label{VhatL2bound}
\int_{\bar{E}_{R_0'}} |\hat{V}|^2 r^{-2-m} \le \frac{K_0}{R_0^2} ||\hat{a}||^2_{L^2(L(\Sigma); B|_{L(\Sigma)})}.
\end{equation}
Observe that 
\[\lim_{\rho \rightarrow \infty} \rho^{1-m}\int_{S_\rho} |\hat{V}|^2 = \int_{L(\Sigma)} |\hat{a}|^2.
\]
By \eqref{Hausdorffdecay} and the weakly conical end property, we calculate 
\begin{align*}\rho^{1-m}\int_{S_\rho} |\hat{V}|^2 &= \rho^{1-m}\Psi_{m+1}^2(\rho) \int_{S_\rho} |V|^2 \\
&\le \rho^{1-m} \Psi_{m+1}^2(\rho) K \rho^{m-1} \textrm{dist}_{\mathcal{H}}(F_1(M)\cap \partial B_\rho , F_2(M)\cap \partial B_\rho) \\
& \le \rho^{1-m}\Psi_{m+1}(\rho)^2 \rho^{-m-3} \Phi_0(\rho) o(1) = o(1)
\end{align*}
This implies that $||\hat{a}||_{L^2} = 0$, and thus that $\hat{V} = V = 0$ on $E_{R_0'}$. The analyticity of self-expanders allows us to say that $F_1$ and $F_2$ coincide where defined.
\end{proof}

\begin{proof}[Proof of Theorem \ref{maintheorem}] By Corollary \ref{nomult}, a connected AC shrinker end with multiplicity may be written as a section $V$ of its own normal bundle satisfying the decay estimates \eqref{Vdecay} and the hypotheses of Theorem \ref{mainthm4}. By Lemma \ref{Vvanish}, the section $V$ is identically 0 and the $k$ sheets of the $k$-fold covering space coincide. Applying this argument to each connected AC end implies that the image of a shrinker $F$ asymptotic to a cone $C$ may be written as a single valued normal section over the cone $C$ outside of a ball. This gives a reparametrization of $F(M) \setminus B_{R_4}$ with multiplicity 1.  

Let $F_1$ and $F_2$ be two shrinkers asymptotic to $C$ which have been reparametrized to have multiplicity 1 over $C$ outside some ball $B_R$. They are homeomorphic to $C_R$ via the covering projections $p_1$ and $p_2$. By Proposition \ref{normbund}, Lemma \ref{secasymp}, and Lemma \ref{linearization} outside some ball $F_2$ can be written as a section $V$ over $F_1$ satisfying the decay estimates \eqref{Vdecay} and the hypotheses of Theorem \ref{mainthm4}. By Lemma \ref{Vvanish}, $V\equiv 0$ so $F_1$ and $F_2$ coincide outside the ball $B_R$. The theorem follows from analytic unique continuation inside the ball. 
\end{proof}

\begin{proof}[Proof of Theorem \ref{xpandertheorem}] Corollary \ref{nomult} cannot be used to reduce the multiplicity of self-expanders, because the separation between the sheets of a single expander does not necessarily satisfy condition \eqref{Hausdorffdecay}. 

For the moment, let $F_1$ and $F_2$ refer to the restrictions of these expanding immersions to one of their connected AC ends. We consider the covers $\tilde F_1$ and $\tilde F_2$ corresponding to the subgroup $G = {p_1}_*(\pi_1(M_{1,R_3}, x_1)) \cap {p_2}_*(\pi_1(M_{2, R_3}, x_2))$, as in Remark \ref{univcov}. Proposition \ref{normbund}, Lemma \ref{secasymp}, and Corollary \ref{xpanderlinearization} suffice to establish that in some annular region, $\tilde M_{2,K}$ can be written as a normal section $V$ over $\tilde M_{1,R_4}$ which satisfies the hypotheses of Corollary \ref{Vvanishxpand} for expanders satisfying \eqref{Hausdorffdecay}. Therefore, $V$ vanishes and the images of $\tilde F_1$ and $\tilde F_2$ coincide. Recall that the induced immersion $\tilde F_i$ is $F_i \circ \mathscr{P}_i$ the composition of the original immersion $F_i:M_{i,K} \rightarrow \R^{n+m} $ and the covering projection $\mathscr{P}_i : \tilde M_{i,K} \rightarrow M_{i,K}$. Hence, the images of $F_1$ and $F_2$ coincide. Then apply this argument to each connected end to obtain the theorem. 
\end{proof}

\begin{rem}
If $F_1$ and $F_2$ are self-expanders satisfying the hypotheses of Theorem \ref{xpandertheorem}, then the coincidence of their images means they must converge to $C$ with the same ``true" multiplicity after a reparametrization. In fact, the conclusion of Theorem \ref{xpandertheorem} implies that one of the ends $M_{i,K}$ must evenly cover the other--thus, the desired reparametrization can be found by taking quotients with respect to deck transformations. In contrast to case of self-shrinkers, there may be many self-expanders which converge non-trivially to the cone $C$ with different multiplicities. 

Another subtlety to note is that if the link of the asymptotic cone is disconnected, the analysis is carried out on each end separately. Therefore, each connected expanding end may converge to its corresponding cone with different multiplicity. Nonetheless, even if this occurs, if $F$ and $\tilde F$ are complete AC expanders without boundary with the same asymptotic cone $C$ which satsify \eqref{Hausdorffdecay}, then Theorem \ref{xpandertheorem} combined with unique continuation inside the ball $B_{R_0}$ implies that they coincide.
\end{rem}

\appendix
\section{Geometry in High Codimension}

Here we collect a number of useful results that are used throughout the body of the paper. First, we recall a useful basic estimate on high codimension graphs over disks from the thesis of A. A. Cooper. 

\begin{lem}\label{HessBound}\cite[Lemma 2.1.2]{Coo} Let $f: D^m_r \rightarrow \R^n$ be a $C^2$ function on the disc of radius $r$. Then
\[ |D^2 f|^2 \le (1 + |Df|^2)^3 |A|_g^2, 
\]
where $|A|_g$ denotes the norm of the second fundamental from with respect to the immersion metric of the associated graph $x \in D^m_r \mapsto (x,f(x)) \in D^m_r \times \R^n$. 
\end{lem}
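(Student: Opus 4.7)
The plan is to prove this by explicit computation of the second fundamental form of the graph $X(x) = (x, f(x))$, and then reduce the desired bound to two clean linear-algebraic estimates. First I would record $\partial_i X = (e_i, \partial_i f)$ and $\partial^2_{ij}X = (0, \partial^2_{ij}f)$, so that the induced metric is $g_{ij} = \delta_{ij} + \partial_i f \cdot \partial_j f$. Both $g$ (acting on $\R^m$) and the auxiliary matrix $M := I_n + Df \, (Df)^T$ (acting on $\R^n$) are symmetric with eigenvalues in $[1, 1+|Df|^2]$, so $g^{-1} \ge (1+|Df|^2)^{-1} I_m$ and $M^{-1} \ge (1+|Df|^2)^{-1} I_n$.

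Next I would explicitly compute the normal projection. The normal space at a point has the parametrisation $\{(-(Df)^T w, w) : w \in \R^n\}$, so projecting $(0, \partial^2_{ij} f)$ onto it yields, after solving a linear system, the identity
\[
\langle A_{ij}, A_{kl}\rangle \;=\; \partial^2_{ij} f \cdot M^{-1} \partial^2_{kl} f .
\]
Therefore
\[
|A|_g^2 \;=\; g^{ij} g^{kl} \, \partial^2_{ik} f^\alpha \, (M^{-1})_{\alpha\beta} \, \partial^2_{jl} f^\beta ,
\]
while $|D^2 f|^2 = \delta^{ij}\delta^{kl}\delta_{\alpha\beta}\, \partial^2_{ik} f^\alpha \partial^2_{jl} f^\beta$ is the Euclidean Frobenius sum.

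The third step is to diagonalise $M^{-1}$ as $M^{-1} = \sum_\gamma \lambda_\gamma u_\gamma u_\gamma^T$ with $\lambda_\gamma \ge (1+|Df|^2)^{-1}$, and pass to rotated Hessian matrices $\tilde H^\gamma_{ik} := (u_\gamma)_\alpha \partial^2_{ik} f^\alpha$. Because the $u_\gamma$ form an orthonormal basis of $\R^n$, we have $\sum_\gamma \|\tilde H^\gamma\|_F^2 = |D^2 f|^2$, and $|A|_g^2 = \sum_\gamma \lambda_\gamma \operatorname{tr}(\tilde H^\gamma g^{-1} \tilde H^\gamma g^{-1})$. The proof then reduces to the following term-by-term estimate: for any symmetric $m\times m$ matrix $H$,
\[
\operatorname{tr}(H g^{-1} H g^{-1}) \;=\; \|g^{-1/2} H g^{-1/2}\|_F^2 \;\ge\; \sigma_{\min}(g^{-1})^2 \,\|H\|_F^2 \;\ge\; (1+|Df|^2)^{-2} \|H\|_F^2 ,
\]
using the general fact $\|ABC\|_F \ge \sigma_{\min}(A)\sigma_{\min}(C)\|B\|_F$ applied with $A = C = g^{-1/2}$. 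Combining this with $\lambda_\gamma \ge (1+|Df|^2)^{-1}$ termwise gives $|A|_g^2 \ge (1+|Df|^2)^{-3}|D^2 f|^2$, which is the claim.

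The main obstacle, compared with the hypersurface case where one merely has $|A|_g = (1+|\nabla f|^2)^{-1/2}|D^2 f|$, is handling the matrix-valued normal projection in arbitrary codimension. Specifically, extracting the normal component of $(0, \partial^2_{ij}f)$ produces the $n\times n$ Gram-type matrix $M = I + Df\,(Df)^T$, and the desired scalar bound has to survive two contractions against $g^{-1}$ and one against $M^{-1}$. The simultaneous diagonalisation trick above is what lets one decouple these three metric factors and obtain exactly three copies of $(1+|Df|^2)^{-1}$ — matching the cubic power on the right-hand side.
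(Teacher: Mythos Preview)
Your argument is correct. The paper does not supply its own proof of this lemma; it is quoted verbatim from Cooper's thesis \cite[Lemma 2.1.2]{Coo}, and the only additional content in the paper is the Remark immediately afterwards, which records exactly the formula you derive, namely
\[
|A|_g^2 \;=\; g^{ik}g^{jl}\,\partial^2_{ij}f_\alpha\,g^{\alpha\beta}\,\partial^2_{kl}f_\beta,
\]
with $g_{\alpha\beta} = \delta_{\alpha\beta} + \sum_i \partial_i f_\alpha\,\partial_i f_\beta$ — i.e.\ your matrix $M = I_n + Df\,(Df)^T$. So the computation of the second fundamental form you carry out agrees on the nose with what the paper records.

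What you add beyond the paper is a self-contained derivation of the inequality itself: diagonalising $M^{-1}$ to decouple the normal directions, and then invoking the Frobenius-norm bound $\|g^{-1/2}Hg^{-1/2}\|_F \ge \sigma_{\min}(g^{-1/2})^2\|H\|_F$ for each symmetric $\tilde H^\gamma$. Both steps are standard and correctly applied; the eigenvalue bounds $g^{-1}\ge (1+|Df|^2)^{-1}I_m$ and $M^{-1}\ge (1+|Df|^2)^{-1}I_n$ follow since the nonzero eigenvalues of $(Df)^T Df$ and $Df\,(Df)^T$ coincide and are each at most $|Df|^2$. Since the paper defers the proof entirely to \cite{Coo}, there is nothing further to compare — your write-up is a complete substitute.
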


\begin{rem}The precise meaning of $|A|_g$ is as follows: Let $\{x_1, \ldots, x_m\}$ be coordinates on $D^m_r$ and let $\{y_1, \ldots, y_\alpha\}$ be coordinates on $\R^n$. Let
\[g_{ij} = (\vec{e}_i, D_{x_i} f) \cdot (\vec{e}_j, D_{x_j} f)
\]
and 
\[g_{\alpha \beta} = (-Df_\alpha, \vec{e}_\alpha) \cdot (-Df_\beta, \vec{e}_\beta)
\]
be metrics on the tangent and normal bundle respectively. If $g^{ij}$ and $g^{\alpha \beta}$ denote the inverse matrices to $g_{ij}$ and $g_{\alpha \beta}$ respectively, then the the norm-squared $|A|_g^2$ is 
\[|A|_g^2 = \frac{\partial^2 f_\alpha}{\partial x_i \partial x_j}\frac{\partial^2 f_\beta}{\partial x_k \partial x_l}g^{\alpha\beta}g^{ik}g^{jl}.
\]
\end{rem}

We restate the extension of this result for higher order terms, with an explicit bound for the third derivative.

\begin{lem}\label{HOT} \cite[Lemma 2.1.3]{Coo}
For any $\ell \ge 2$, we can bound $|D^\ell f|$ in terms of $|Df|$, $|D^2f|$,..., $|D^{\ell-1}f|$, $|\nabla^{\ell-2} A|_g$, and absolute constants depending on $m, n,$ and $\ell$. In particular, for $\ell =3 $,
\[|D^3 f| \le (1+ |Df|^2)^2 |\nabla A|_g + \big(2\sqrt{2m + 4\sqrt{mn} +n} \big)|D^2f|^2 |Df|
\]
\end{lem}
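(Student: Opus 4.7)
The plan is to express the covariant derivative $\nabla^{\ell - 2} A$ of the second fundamental form of the graph in terms of the Euclidean partial derivatives of $f$, isolate the highest-order term $D^\ell f$, and invert. Starting from the graph parametrization $F(x) = (x, f(x))$, the tangent frame is $\partial_i F = (\vec{e}_i, D_{x_i} f)$, the induced metric is $g_{ij} = \delta_{ij} + \partial_i f \cdot \partial_j f$, and the second fundamental form is the normal component $(\partial^2_{ij} F)^\perp$ of $(0, \partial^2_{ij} f)$, where the projection onto the normal bundle is a smooth function of $Df$ alone. This is the structural observation underlying the base case Lemma \ref{HessBound}: the map $D^2 f \mapsto A$ is linear with coefficients controlled by $|Df|$, hence $|D^2 f|$ is bounded by a polynomial in $|Df|$ times $|A|_g$.

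For the case $\ell = 3$, I would compute $\nabla_k A_{ij}$ directly, using the fact that the normal-bundle connection is the projection of the ambient Euclidean connection. Expanding $\nabla_k A_{ij} = \partial_k A_{ij} - \Gamma^l_{ki} A_{lj} - \Gamma^l_{kj} A_{il}$ (in the normal-bundle basis used to define $|\nabla A|_g$), the term $\partial_k A_{ij}^\alpha$ contributes $\partial^3_{kij} f^\alpha$ coupled to a projection coefficient in $Df$, plus a piece where the derivative falls on the projection coefficient, which is bilinear in $D^2 f$ and a function of $Df$. The Christoffel corrections $\Gamma^l_{ki} A_{lj}$ are contractions of $g^{lm}$ with $(\partial_k g_{mi} + \partial_i g_{mk} - \partial_m g_{ki}) A_{lj}$, hence each is bilinear in $D^2 f$ with coefficients bounded in terms of $|Df|$. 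Solving for $\partial^3_{kij} f^\alpha$ produces a linear combination of components of $\nabla A$ (with scalar weight $(1 + |Df|^2)^2$ arising from the inverse metric and normal-bundle factors appearing in $|\nabla A|_g$) plus terms quadratic in $D^2 f$ and linear in $Df$. The stated numerical coefficient $2\sqrt{2m + 4\sqrt{mn} + n}$ then emerges from summing the squared contributions of the Christoffel-correction terms over the ranges $i, j, k, l \in \{1, \ldots, m\}$ and $\alpha, \beta \in \{1, \ldots, n\}$, with the cross terms giving the $4\sqrt{mn}$ contribution after a Cauchy--Schwarz step.

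For the general bound, I would induct on $\ell$, the base case $\ell = 2$ being Lemma \ref{HessBound}. Supposing $|D^j f|$ is controlled for $j < \ell$ by $|Df|$ and $|\nabla^{j - 2} A|_g$, observe that differentiating the identity for $\nabla^{\ell - 3} A$ once more yields the schematic identity
\[
\nabla^{\ell - 2} A = P_\ell(Df) \cdot D^\ell f + \mathcal{R}_\ell\bigl(Df, D^2 f, \ldots, D^{\ell - 1} f\bigr),
\]
where $P_\ell(Df)$ is an invertible linear operator whose inverse is bounded by a polynomial in $(1 + |Df|^2)$, and $\mathcal{R}_\ell$ is a polynomial each of whose monomials is a product of $D^j f$'s with $j < \ell$ times Christoffel-type contractions of $g^{-1}$ with lower covariant derivatives of $A$. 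Solving for $D^\ell f$ and using the inductive hypothesis replaces lower $|\nabla^{j - 2} A|_g$ by their original form; this gives the desired bound in terms of the listed quantities and constants depending only on $m$, $n$, and $\ell$.

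The main obstacle is the combinatorial and tensorial bookkeeping. The schematic structure is straightforward, but in order to recover the sharp scalar constant $2\sqrt{2m + 4\sqrt{mn} + n}$ for $\ell = 3$ one must carefully track how the contractions $g^{\alpha\beta} g^{ik} g^{jl}$ defining $|\nabla A|_g^2$ match up with the index contractions appearing in the inverted Christoffel terms, and how the $(1+|Df|^2)^p$ factors propagate through the normal-bundle projection $\mathbf{n}_\beta = (1+|Df_\beta|^2)^{-1/2}(-Df_\beta, \vec e_\beta)$. Beyond this accounting, the proof is a direct algebraic manipulation, and the inductive structure for general $\ell$ runs without any new geometric input.
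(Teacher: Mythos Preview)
The paper does not prove this lemma; it is quoted verbatim from Cooper's thesis \cite[Lemma 2.1.3]{Coo} and stated in Appendix~A without argument. There is therefore no proof in the paper to compare your proposal against.

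Your outline is the natural route and almost certainly the one taken in \cite{Coo}: write $A$ as the normal projection of $(0,D^2f)$, differentiate covariantly, and invert the resulting linear-in-$D^\ell f$ relation with lower-order remainder. One small point: your displayed formula $\nabla_k A_{ij} = \partial_k A_{ij} - \Gamma^l_{ki}A_{lj} - \Gamma^l_{kj}A_{il}$ omits the normal-connection term (the second fundamental form is normal-bundle valued, so a term of the form $\omega_k^{\alpha\beta}A_{ij}^\beta$ coming from differentiating the normal frame should appear); you seem to absorb this into the phrase ``a piece where the derivative falls on the projection coefficient,'' but it should be made explicit, since it contributes to the $|D^2f|^2|Df|$ remainder on the same footing as the Christoffel corrections. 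Beyond that and the index bookkeeping you already flag for the sharp constant, the sketch is sound.
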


\section{Results for Self-Expanders}

In this appendix, we collect a number of results from \cite{Ber} that allow us to extend the arguments used to prove Theorem \ref{maintheorem} for self-shrinkers to the case of self-expanders in Theorem \ref{xpandertheorem}. To extend the proofs of these results for scalar functions to vector bundles over weakly conical ends is in general straightforward after either replacing the scalar functions $|u|$ and $|\nabla_g u|$ with $|V|$ and $|\nabla V|$ or imitating calculations performed in Section 4. Therefore, we omit proofs. 

The operator $\mathcal{L}_0^+$ appears in Corollary \ref{xpanderlinearization}. It can be seen as one of a class of operators 
\[ \mathcal{L}^+_\m = \Delta + \frac{r}{2}\nabla_{\partial_r} + \frac{\m}{r} \nabla_{\partial_r} 
\]
associated to the weights
\[\Psi_\mu = r^\mu e^{\frac{r^2}{4}}
\]
Analogous to Proposition \ref{harm2eigen}, the almost eigensections of $\mathcal{L}^+_\m$ can be transformed into almost eigensections of an associated $\mathcal{L}_{\m '}$, for some $\m'$. 

\begin{prop}\label{plustominus} There is a constant $M' = M'(M, \m, \nu, n, \Lambda)$ such that if $(\Sigma^m, g, r)$ is an asymptotically conical end with associated constant $\Lambda$, $(B,p,\Sigma, h, \nabla)$ is a vector bundle of rank $n$ over $\Sigma$ with metric $h$, and $V \in C^2(\Sigma; B)$ satisfies
\begin{enumerate}
    \item $|(\mathcal{L}_\m + \lambda )V| \le Mr^{-2}(|V| + |\nabla V|)$, then $\hat{V} = \Phi_\mu V$ satisfies
    \[\bigg| \bigg( \mathcal{L}^+_{\mu - 2\nu} + \frac{1}{2}(n + m + 2\lambda - \nu)\bigg) \hat{V} \bigg| \le M'r^{-1}(|\hat{V}| + r^{-1}|\nabla \hat{V}|)
    \]
    \item $|(\mathcal{L}^+_\mu + \lambda) V| \le M r^{-2} (|V| + r^{-1} |\nabla V|)$, then $\hat{V} = \Psi_\mu(r)V$
     \[\bigg| \bigg( \mathcal{L}_{\mu - 2\nu} + \frac{1}{2}(-n - m + 2\lambda + \nu)\bigg) \hat{V} \bigg| \le M'r^{-2}(|\hat{V}| + |\nabla \hat{V}|)
    \]
\end{enumerate}
\end{prop}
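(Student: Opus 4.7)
The plan is to prove both parts by direct computation, mirroring Proposition \ref{harm2eigen} where conjugation by $r^{2\nu}$ intertwined two operators in the $\mathcal{L}_{\mu'}$ family. Here the conjugating factors are the Gaussian weights $\Phi_\mu = r^\mu e^{-r^2/4}$ and $\Psi_\mu = r^\mu e^{r^2/4}$; the essential algebraic feature is that $d\Phi_\mu/\Phi_\mu = (\mu/r - r/2)\,dr$ carries a $-r/2$ term whose sign is opposite to the $+r/2$ drift in $\mathcal{L}^+_{\mu'}$, which is exactly what allows conjugation by $\Phi_\mu$ to intertwine the $\mathcal{L}_\mu$ and $\mathcal{L}^+$ families.

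For part (1), I would begin from the Leibniz identity
\[
\mathcal{L}^+_{\mu'}(fV) \;=\; (\mathcal{L}^+_{\mu'}f)\,V \;+\; f\,\mathcal{L}^+_{\mu'}V \;+\; 2\nabla_{\nabla_g f}V,
\]
obtained by expanding $\Delta(fV)$ and applying metric compatibility of $\nabla$, exactly as in the first display of the proof of Proposition \ref{harm2eigen}. Taking $f = \Phi_\mu$ and rewriting $\mathcal{L}^+_{\mu'}V = \mathcal{L}_\mu V + \bigl((\mu'-\mu)/r + r\bigr)\nabla_{\partial_r}V$, the $r\nabla_{\partial_r}V$ term from this rewriting cancels exactly against the $-r$ portion of $2\Phi_\mu(\mu/r - r/2)\nabla_{\partial_r}V$, leaving a residual first-order remainder of order $\Phi_\mu/r\cdot \nabla_{\partial_r}V$. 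This cancellation of the leading drift is the crucial algebraic fact that makes $\Phi_\mu$ the \emph{correct} conjugating weight.

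The eigenvalue shift is produced by evaluating $\mathcal{L}^+_{\mu'}\Phi_\mu$ using $\Phi_\mu^{(1)} = (\mu/r - r/2)\Phi_\mu$, the analogous expression for $\Phi_\mu^{(2)}$, and the asymptotic identities $\Delta r = (m-1)/r + O(r^{-3})$ and $|\nabla_g r|^2 = 1 + O(r^{-4})$ from \eqref{divvec} and \eqref{radderiv}. The $r^2/4$ contributions from $\Phi_\mu^{(2)}$ and from $(r/2)\Phi_\mu^{(1)}$ cancel, giving $\mathcal{L}^+_{\mu'}\Phi_\mu = c\,\Phi_\mu + O(r^{-2})\Phi_\mu$ for an explicit constant $c = c(\mu,\mu',m,n)$ depending linearly on its arguments. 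Invoking the hypothesis $\mathcal{L}_\mu V = -\lambda V + R_V$ with $|R_V| \le M r^{-2}(|V|+|\nabla V|)$ and then substituting $V = \Phi_\mu^{-1}\hat V$, $\Phi_\mu \nabla V = \nabla \hat V - V\otimes d\Phi_\mu$, yields the stated inequality after choosing $\nu$ so that the residual first-order term is absorbed into an adjustment of the subscript $\mu'$ via the identity $\mathcal{L}^+_{\mu'}\hat V - (2(\mu-\nu)/r)\nabla_{\partial_r}\hat V = \mathcal{L}^+_{\mu' - 2(\mu - \nu)}\hat V$.

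The main obstacle is bookkeeping the growth of $|d\Phi_\mu| \sim (r/2)\Phi_\mu$ when passing from norms on $V$ to norms on $\hat V$: the inequality $\Phi_\mu|\nabla V| \le |\nabla \hat V| + (r/2)|\hat V|$ inflates the hypothesis term $Mr^{-2}\Phi_\mu|\nabla V|$ into $Mr^{-2}|\nabla\hat V| + Mr^{-1}|\hat V|$, and this loss is precisely the origin of the $r^{-1}$ prefactor on $|\hat V|$ in the conclusion of (1) rather than the expected $r^{-2}$. Part (2) follows by the symmetric computation with $\Psi_\mu$ replacing $\Phi_\mu$: since $\Psi_\mu^{(1)} = (\mu/r + r/2)\Psi_\mu$, the $+r/2$ cross term now cancels the $-r/2$ drift of $\mathcal{L}_{\mu'}$, and because the hypothesis on $V$ already carries the improved weight $r^{-3}|\nabla V|$, the $r$-growth of $|d\Psi_\mu|$ is absorbed exactly and one recovers the clean $r^{-2}(|\hat V|+|\nabla\hat V|)$ bound with no degradation in either factor.
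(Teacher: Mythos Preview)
Your proposal is correct and follows essentially the same approach as the paper. The paper does not give a detailed proof of this proposition; it states only that the argument is almost identical to Bernstein's Proposition 7.1 with the vector-bundle modifications already worked out in Proposition \ref{harm2eigen}, which is precisely the direct Leibniz-rule computation you outline---expand $\mathcal{L}^+_{\mu'}(fV)$, observe the exact cancellation of the $\pm r\nabla_{\partial_r}V$ drift terms when $f=\Phi_\mu$ (resp.\ $\Psi_\mu$), extract the constant eigenvalue shift from $\mathcal{L}^+_{\mu'}f$ using \eqref{divvec} and \eqref{radderiv}, and then convert norms of $V,\nabla V$ to norms of $\hat V,\nabla\hat V$ while tracking the $r$-growth of $|d\Phi_\mu|$.
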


The proof is almost identical to that of \cite[Proposition 7.1]{Ber}, and all necessary modifications for the vector bundle case can be found in Proposition \ref{harm2eigen}. Therefore, we may apply Theorem \ref{mainthmharmver} to obtain the following theorem analogous to \cite[Theorem 7.2]{Ber}.

\begin{thm}\label{xpanderuniq} If $\hat{V} = \Psi_{m-2\lambda}V \in C^2_{4\lambda -2m +2,1}(\Sigma; B)$ and $V$ satisfies
\[ |(\mathcal{L}^+_0 + \lambda) V| \le M r^{-2} (|V| + r^{-1}|\nabla V|) 
\]
then there are constants $R_0'$ and $K_0'$, depending on $V$, so that for any $R \ge R_0'$
\[
\int_{\bar{E}_R} \bigg( |\hat{V}|^2 +r^2|\nabla \hat{V}|^2 + r^4 | \nabla_{\partial_r} \hat{V}| \bigg) r^{-1-m} \le \frac{K_0'}{R^{m}} \int_{S_R} |\hat{V}|^2
\]
Moreover, $\hat{V}$ is asymptotically homogeneous of degree $0$ and $\tr_\infty^{0} \hat{V} = \hat{a}$ for some section $\hat{a} \in L^2(L(\Sigma); B|_{L(\Sigma)})$ that satisfies $\alpha^2 = \lim_{\rho \rightarrow \infty} \rho^{1-m} \int_{S_\rho} |\hat{V}|^2 = \int_{L(\Sigma)} |\hat{a}|^2$ and, 
\[
\int_{\bar{E}_R} \bigg( |\hat{V}|^2 + r^2 (|\hat{V}- \hat{A}|^2 + |\nabla \hat{V}|^2) + r^4 |\nabla_{\partial_r} V|^2| \bigg) r^{-2-m}  \le \frac{K_0' \alpha^2}{R^2}.
\]
Here $\hat{A} \in L^2_{loc}(\Sigma; B)$ is the leading term of $\hat{V}$ and $L(\Sigma)$ is the link of the asymptotic cone. 
\end{thm}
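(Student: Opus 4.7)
The plan is to reduce Theorem \ref{xpanderuniq} to an application of Theorem \ref{mainthmharmver} by exhibiting $\hat V = \Psi_{m-2\lambda}V$ as an almost $\mathcal{L}_{\mu'}$-harmonic section with $\mu' = 4\lambda - 2m$; the weighted Sobolev hypothesis $\hat V \in C^2_{4\lambda - 2m + 2, 1}(\Sigma; B)$ is then precisely the space $C^2_{\mu' + 2, 1}(\Sigma; B)$ that Theorem \ref{mainthmharmver} requires.

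The first step, which is essentially the content of Proposition \ref{plustominus}(2), is a direct conjugation computation. Writing $\mathcal{L}_{\mu - 2\kappa}(\Psi_\kappa V)$ in terms of $\mathcal{L}^+_\mu V$ via the weakly conical estimates \eqref{divvec}, \eqref{asymphess} and the identity $|\nabla_g r|^2 = 1 + O(r^{-4})$, one obtains an identity of the form
\[
\mathcal{L}_{\mu - 2\kappa}(\Psi_\kappa V) = \Psi_\kappa\bigl[\mathcal{L}^+_\mu V + \tfrac{1}{2}(m + \mu - \kappa) V\bigr] + O(r^{-2})\,\Psi_\kappa\bigl(|V| + r^{-1}|\nabla V|\bigr).
\]
Specializing to $\mu = 0$ and $\kappa = m - 2\lambda$ makes the zeroth-order shift $\tfrac{1}{2}(m + \mu - \kappa) = \lambda$ cancel the eigenvalue $-\lambda V$ produced by the hypothesis $\mathcal{L}^+_0 V = -\lambda V + O(r^{-2})(|V| + r^{-1}|\nabla V|)$. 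Absorbing $\Psi_\kappa(|V| + r^{-1}|\nabla V|)$ into $|\hat V| + r^{-1}|\nabla \hat V|$ (via $\nabla \hat V = \Psi_\kappa \nabla V + (\kappa/r + r/2)\,\Psi_\kappa\,dr\otimes V$) then yields $|\mathcal{L}_{4\lambda - 2m}\hat V| \le M' r^{-2}(|\hat V| + |\nabla \hat V|)$, so that $\hat V$ is almost $\mathcal{L}_{4\lambda - 2m}$-harmonic in the sense of \eqref{almostharmonic}.

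With this reduction in hand, I would simply invoke Theorem \ref{mainthmharmver} with weight parameter $\mu = 4\lambda - 2m$ applied to $\hat V$; this delivers at once the integral decay bound, the asymptotic homogeneity of $\hat V$ of degree $0$ with trace at infinity $\hat a \in H^1(L(\Sigma); B|_{L(\Sigma)})$, and the quantitative estimate comparing $\hat V$ to its leading term $\hat A$. The main obstacle lies in the conjugation bookkeeping in the first step: multiplication by $\Psi_\kappa = r^\kappa e^{r^2/4}$ produces a priori corrections of size $r|\hat V|$ coming from the drift term $\nabla_{\partial_r}\Psi_\kappa$, and the key cancellation is that these large corrections combine with the original Laplacian and drift of $\mathcal{L}^+_\mu$ to yield exactly the operator $\mathcal{L}_{\mu - 2\kappa}$ on $\hat V$, plus only a zeroth-order shift and an $O(r^{-2})$ remainder. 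Verifying that the weakly conical error terms (from $|\nabla_g r|^2 - 1$ and $\divg_g \partial_r - (m-1)/r$) do not spoil this cancellation, and that the bound on $\mathcal{L}^+_0 V$ with the $r^{-1}|\nabla V|$ factor transfers correctly to a bound on $\mathcal{L}_{4\lambda - 2m}\hat V$ of the form $r^{-2}(|\hat V| + |\nabla \hat V|)$, is the main technical point.
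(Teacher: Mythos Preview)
Your proposal is correct and follows essentially the same route as the paper: conjugate by $\Psi_{m-2\lambda}$ via Proposition~\ref{plustominus}(2) to turn the almost-$(\mathcal{L}^+_0,\lambda)$ eigensection $V$ into an almost $\mathcal{L}_{4\lambda-2m}$-harmonic section $\hat V$, and then invoke Theorem~\ref{mainthmharmver} with $\mu=4\lambda-2m$. The conjugation identity you wrote, with the zeroth-order shift $\tfrac12(m+\mu-\kappa)$ and the absorption of $\Psi_\kappa(|V|+r^{-1}|\nabla V|)$ into $|\hat V|+|\nabla\hat V|$, is exactly the mechanism underlying Proposition~\ref{plustominus}(2), so nothing is missing.
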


The final result (analogous to \cite[Theorem 9.1]{Ber}) needed to prove Theorem \ref{xpandertheorem} tells us that the normal section $V$ representing an expander satisfies the hypothesis $\Psi_{m-2\lambda}V \in C^2_{4\lambda -2m +2,1}(\Sigma; B)$ from Theorem \ref{xpanderuniq}. The arguments of Section 9 in \cite{Ber} do not substantially change in the vector-valued case. The necessary modifications to the proofs can be obtained by imitating the proof of the Poincar\'{e} inequality in Proposition \ref{poincareineq} and recalling the integration by parts formula used in the derivation of identity \eqref{dirichlet}. 

\begin{thm}\label{xpanddecay}
If $V \in C^2(\bar{E}_R;B)$ satisfies 
\[ |(\mathcal{L}_0^+ + \lambda )V| \le Mr^{-1}(|V| + |\nabla V|) \text{ and } B(\rho) = o( \rho^{-4\lambda +m -1}), \rho \rightarrow \infty,
\]
then $\Psi_0 V \in C^2_{\m', 1}(\bar{E}_R; B)$ for any $\m'$.
\end{thm}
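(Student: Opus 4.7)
The plan is to imitate the scalar proof of \cite[Theorem 9.1]{Ber}, making the vector-bundle substitutions in the spirit of Section 4 above. The conclusion $\Psi_0 V \in C^2_{\m', 1}$ for every $\m'$ is equivalent to saying that $B(\rho)$ decays faster than any polynomial times $e^{-\rho^2/4}$; morally, the hypothesis $B(\rho) = o(\rho^{-4\lambda + m - 1})$ rules out the polynomial-decay mode of the approximate eigensection equation for $\mathcal{L}_0^+ + \lambda$, so $V$ is forced onto the exponentially-decaying mode compatible with the Gaussian weight $\Psi_0 = e^{r^2/4}$.

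First, I would introduce the $\Psi_\mu$-weighted boundary $L^2$-norm, flux, and Dirichlet energy
\[
\hat B^+_\mu(\rho) = \Psi_\mu(\rho) B(\rho), \quad \hat F^+_\mu(\rho) = \Psi_\mu(\rho) F(\rho), \quad \hat D^+_\mu(\rho) = \int_{E_\rho} |\nabla V|^2 \Psi_\mu,
\]
parallel to the $\Phi_\mu$-weighted quantities in Section 4. Integrating $\Psi_\mu \langle V, \mathcal{L}^+_\mu V\rangle$ by parts yields a flux identity $\hat F^+_\mu(\rho) = \hat D^+_\mu(\rho) + \hat L^+_\mu(\rho)$, because $\Psi_\mu$ is exactly the weight for which the first-order terms of $\mathcal{L}^+_\mu$ cancel against $d\Psi_\mu/\Psi_\mu$ under integration by parts. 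The derivative formulas for $\hat B^+_\mu(\rho)$ and $\hat D^+_\mu(\rho)$ follow from the same vector-field and Rellich-Ne\v{c}as calculations used in Lemma \ref{dboundarynorm} and Proposition \ref{ddirichletenergy}; the only substantive change is the sign of the Gaussian exponent (using $\Psi_\mu$ in place of $\Phi_\mu$), and compatibility of $\nabla$ with the bundle metric makes the bundle-valued Rellich-Ne\v{c}as identity go through verbatim as in Proposition \ref{ddirichletenergy}.

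Next, I would set up a bootstrap: fix $\m'$ and aim to show $\hat B^+_{\m'}(\rho)$ is bounded as $\rho \to \infty$. A Poincar\'e-type inequality for the $\Psi_\mu$-weighted norm, obtained by applying the divergence theorem to the vector field $r|V|^2 \Psi_\mu \mathbf{N}$ exactly as in Proposition \ref{poincareineq}, lets me absorb the lower-order error terms. Combined with the flux identity, the PDE bound $|(\mathcal{L}_0^+ + \lambda)V| \le Mr^{-1}(|V|+|\nabla V|)$, and Cauchy-Schwarz, this yields a Gronwall-type differential inequality for $\hat B^+_\mu(\rho)$ whose remaining error terms are of order $r^{-1}$. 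Starting from the $\mu$ for which the hypothesis $B(\rho) = o(\rho^{-4\lambda + m - 1})$ already provides $\hat B^+_\mu(\rho) \to 0$, I would then iterate by raising $\mu$ in integer steps and using the weighted Poincar\'e inequality together with standard interior elliptic estimates to absorb the $r^{-1}$ errors at each stage.

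The main obstacle will be that the almost-eigensection hypothesis has only $r^{-1}$-decay in the error (in contrast to the $r^{-2}$-decay required in Proposition \ref{plustominus}), so one cannot simply transform $V$ into an almost eigensection of some $\mathcal{L}_\mu$ and invoke Theorem \ref{mainthmharmver} directly. Instead, the $r^{-1}$ error terms must be absorbed by hand at each step of the bootstrap, using the Gaussian weight $e^{r^2/4}$ together with Cauchy-Schwarz. This delicate bookkeeping is identical to the scalar case in \cite[Section 9]{Ber}, with the only new ingredients being the same uniform substitutions of $|V|$ and $|\nabla V|$ for the scalar quantities that were already carried out in Section 4.
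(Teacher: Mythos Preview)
Your proposal is correct and takes essentially the same approach as the paper: the paper itself omits the proof entirely, stating only that the arguments of \cite[Section 9]{Ber} go through with the vector-bundle modifications obtained by imitating Proposition \ref{poincareineq} and the integration-by-parts identity \eqref{dirichlet}, which is precisely the strategy you outline. Your sketch in fact supplies more detail than the paper does, correctly identifying the $\Psi_\mu$-weighted quantities, the flux identity, the Rellich--Ne\v{c}as computation, and the bootstrap in $\mu$ as the scalar argument's main ingredients to be transplanted.
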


\section*{Acknowledgments} 
I would like to thank: Bing Wang, for introducing me to the article \cite{Wang} and for many helpful discussions; Lu Wang, for pointing me towards the article \cite{Ber} and for illuminating comments on unique continuation; Mariel S\'{a}ez, for a discussion on issues of orientability that motivated the addition of Remark \ref{rem-orientability}; Sigurd Angenent, for several suggestions that became the genesis of Lemma 3.8 and of the proof of Lemma 3.3, and for carefully reading reading several sections of this article. I would also like to thank the anonymous journal referee(s) for important comments and corrections.

\end{document}